\author[A.~Kwela]{Adam Kwela}
\address{Institute of Mathematics, Faculty of Mathematics, Physics and Informatics, University of Gda{\'n}sk, ul. Wita Stwosza 57, 80-308 Gda{\'n}sk, Poland} %
\email{adam.kwela@ug.edu.pl}
\author[P.~Leonetti]{Paolo Leonetti}
\address{Department of Decision Sciences, Universit\`a ``Luigi Bocconi'', via Roentgen 1, 20136 Milan, Italy}
\email{leonetti.paolo@gmail.com}
\urladdr{https://sites.google.com/site/leonettipaolo/}
\keywords{Density-like ideal; lower semicontinuous submeasure; nonpathological exhaustive submeasure; generalized density ideal.}
\subjclass[2010]{Primary: 28A05. Secondary: 03E15, 11B05.}
\title{Density-Like and Generalized Density Ideals}
   \def\MR#1{}
\newtheorem{thm}{Theorem}[section]
\newtheorem{cor}[thm]{Corollary}%[section]
\newtheorem{lem}[thm]{Lemma}
\newtheorem{prop}[thm]{Proposition}
\theoremstyle{definition} 
\newtheorem{defi}[thm]{Definition}%[section]
\let\olddefi\defi
\renewcommand{\defi}{\olddefi\normalfont}
\newtheorem{question}{Question}%[section]
\let\oldquestion\question
\renewcommand{\question}{\oldquestion\normalfont}
\newtheorem{example}[thm]{Example}
\let\oldexample\example
\renewcommand{\example}{\oldexample\normalfont}
\newtheorem{rmk}[thm]{Remark}
\let\oldrmk\rmk
\renewcommand{\rmk}{\oldrmk\normalfont}
\newcommand{\Exh}{\mathrm{Exh}}
\newcommand{\Fin}{\mathrm{Fin}}
\newcommand{\I}{\mathcal{I}}
\newcommand{\Dweak}{\mathrm{D}_{\mathrm{weak}}}
\newcommand{\Dstrong}{\mathrm{D}_{\mathrm{strong}}}
\providecommand{\MR}[1]{}
\providecommand{\MR}{\relax\ifhmode\unskip\space\fi MR }
\providecommand{\href}[2]{#2}
\begin{document}

\maketitle
\thispagestyle{empty}

\begin{abstract}
We show that there exist uncountably many 
(tall and nontall) 
pairwise
nonisomorphic density-like ideals on $\omega$ which are not generalized density ideals. 
In addition, they are nonpathological. 
This answers a question posed 
by Borodulin-Nadzieja, Farkas, and Plebanek 
in [J. Symb. Log. \textbf{80} (2015), 1268--1289]. 
Lastly, we provide sufficient conditions for a density-like ideal to be necessarily a generalized density ideal.
\end{abstract}

%%%%%%%%%%%%%%%%%%%%%%%%%%%%%%%%%%%%%%%%%%%%%%%%%%%%%%%%%%%%%%%%%%%%%%%%%%%%

\section{Introduction}\label{sec:intro}

An ideal $\mathcal{I}$ on the nonnegative integers $\omega$ is a family of subsets of $\omega$ closed under finite unions and subsets. 
Unless otherwise stated, we assume that $\mathcal{I}$ is admissible (i.e., it contains $\mathrm{Fin}:=[\omega]^{<\omega}$) and proper (i.e., $\omega\notin \mathcal{I}$). 
An ideal $\mathcal{I}$ is tall if each infinite set $A\subseteq \omega$ contains an infinite subset in $\mathcal{I}$. 
It is a P-ideal if it is $\sigma$-directed modulo finite sets, i.e., for each sequence $(A_n)$ in $\mathcal{I}$ there is $A\in\mathcal{I}$ such that $A_n\setminus A$ is finite for all $n$.
%$\mathcal{P}(A) \cap \mathcal{I} \setminus \mathrm{Fin} \neq \emptyset$. 
Ideals are regarded as subsets of $\{0,1\}^\omega$ with the Cantor-space topology, hence it is possible to speak about Borel, analytic ideals, etc. 
We refer to \cite{MR2777744, MR3920747} for recent surveys on ideals and associated filters.

A lower semicontinuous submeasure (lscsm) $\varphi:\mathcal{P}(\omega) \to [0,\infty]$ is a subadditive monotone function such that $\varphi(\emptyset)=0$, $\varphi(\{n\})<\infty$ for all $n\in\omega$, and $\varphi(A)=\lim_n \varphi(A\cap n)$ for all $A\subseteq \omega$ (here, as usual, each $n$ is identified with $\{0,1,\ldots,n-1\}$).  
Denote by $\mathrm{supp}(\varphi):=\{n \in \omega: \varphi(\{n\})\neq 0\}$ its support. 
A lscsm with finite support will be typically denoted by $\mu$. 
It is folklore that the pointwise supremum of lscsms is a lscsm. 
%, if $(\varphi_n)$ is a sequence of lscsm, then $\varphi:=\sup_n \varphi_n$ (with the pointwise order) is a lscsm. 
%, where $n$ is identified with $\{0,1,\ldots,n-1\}$. 
%Unless otherwise stated, we assume also that $\varphi(\omega \setminus n) \not\to 0$. 
For each lscsm $\varphi$, we associate its exhaustive ideal 
$$
\mathrm{Exh}(\varphi):=\left\{A\subseteq \omega: \|A\|_\varphi=0\right\},
$$
where $\|A\|_\varphi:=
%\lim_n \varphi(A\setminus n)=
\inf_{F \in \mathrm{Fin}} \varphi(A\setminus F)$. A classical result of Solecki states that a (not necessarily proper) ideal $\mathcal{I}$ is an analytic P-ideal if and only if $\mathcal{I}=\mathrm{Exh}(\varphi)$ for some lscsm $\varphi$ such that $\varphi(\omega)<\infty$, 
% (hence, $\mathcal{I}$ is proper if and only if, in addition, $\lim_n \varphi(\omega\setminus n)>0$), 
 see e.g. \cite[Section 1.2]{MR1711328} for a textbook exposition. In particular, each analytic P-ideal is $F_{\sigma\delta}$. 
Every lscsm $\varphi$ defines a metric $d_\varphi$ on $\mathcal{I}=\mathrm{Exh}(\varphi)$ given by $d_\varphi(A,B)=\varphi(A\triangle B)$ for all $A,B \in \mathcal{I}$. 
The topology induced on $\mathcal{I}$ is Polish and does not depend on the choice of $\varphi$, see \cite{MR1708146}.
%In \cite{MR1708146} it was proved that the topology on $\mathcal{I}$ induced by this metric is Polish and does not depend on the choice of $\varphi$. %In the last item of next Proposition we refer to this topology. 
 
The aim of this work is to study the relationship between two families of analytic P-ideals defined below.

%We proceed to the two main definitions of this work. 
\begin{defi}\label{def:generalizeddensityideal}
An ideal $\mathcal{I}$ is called a \textbf{generalized density ideal} if there exists a sequence $\bm{\mu}=(\mu_n)$ of lscsms with finite pairwise disjoint supports such that $\mathcal{I}=\mathrm{Exh}(\varphi_{\bm{\mu}})$, where $\varphi_{\bm{\mu}}:=\sup_n \mu_n$. 
\end{defi}
Note that if $\bm{\mu}=(\mu_n)$ is a sequence of lscsms with finite pairwise disjoint supports, then
$$
\mathrm{Exh}\left(\varphi_{\bm{\mu}}\right)=\mathrm{Exh}\left(\limsup_{n\to \infty}\mu_n\right).
$$
%$$
%\mathrm{Exh}(\varphi_{\bm{\mu}})=\mathrm{Exh}(\limsup_{n\to \infty}\mu_n).
%$$

Generalized density ideals have been introduced by Farah in \cite[Section 2.10]{MR1988247}, see also \cite{MR2254542}, and have been used in different contexts, see e.g. \cite{MR3436368, MR2320288, MR3968131}. 
We remark that Farah's original definition assumed that $\{\mathrm{supp}(\mu_n): n \in \omega\}$ is a partition of $\omega$ into finite intervals; however, we will show in Proposition \ref{prop:supportgeneralizeddensityideal} that this is equivalent to Definition \ref{def:generalizeddensityideal}. 
The family of generalized density ideals is very rich. Indeed, if each $\mu_n$ is a measure then $\mathrm{Exh}(\varphi_{\bm{\mu}})$ is a \textbf{density ideal}, as defined in \cite[Section 1.13]{MR1711328}, cf. also \cite[Proposition 6.3]{MR3436368}. In particular, it includes $\emptyset \times \mathrm{Fin}$, the ideal of density zero sets
$$
\mathcal{Z}:=\left\{A\subseteq \omega: \lim_{n\to \infty}\frac{|A\cap n|}{n}=0\right\},
$$
and all the Erd{\H o}s--Ulam ideals introduced by Just and Krawczyk in \cite{MR748847}, that is, ideals of the type $\mathrm{Exh}(\varphi_f)$ where $f: \omega \to (0,\infty)$ is a function such that $\sum_{n}f(n)=\infty$, $f(n)=o\left(\sum_{i\le n}f(i)\right)$ as $n\to \infty$, and $\varphi_f: \mathcal{P}(\omega) \to (0,\infty)$ is the submeasure defined by
$$
\forall A\subseteq \omega,\quad \varphi_f(A)=\sup_{n \in \omega} \frac{\sum_{i\le n,\, i \in A}f(i)}{\sum_{i\le n}f(i)},
$$
see \cite[pp. 42--43]{MR1711328}.  
In addition, this family contains the ideals associated with suitable modifications of the natural asymptotic density, the so-called \textbf{simple density ideals}, see \cite{MR3391516, MR3950052} and Section \ref{sec:nontall} below.  
Lastly, a large class of generalized density ideals has been defined by Louveau and Veli\v{c}kovi\'{c} in \cite{MR1708151, MR1169042}, cf. also \cite[Section 2.11]{MR1988247}.

\begin{defi}\label{def:densitylike}
An ideal $\mathcal{I}$ is said to be \textbf{density-like} if $\mathcal{I}=\mathrm{Exh}(\varphi)$ for a density-like lscsm $\varphi$, that is, a lscsm such that for all $\varepsilon>0$ there exists $\delta>0$ for which, if $(F_n) \in \mathrm{Fin}^\omega$ is a sequence of finite pairwise disjoint sets with $\varphi(F_n)<\delta$ for all $n$, then $\varphi(\bigcup_{i \in I}F_i) < \varepsilon$ for some infinite $I\subseteq \omega$. %$I\in [\omega]^\omega$. 
\end{defi}

The class of density-like ideals played a role in \cite{Matrai, MR2787694}. The main result of \cite{MR2787694} states that the ideal $\mathrm{NWD}$ of all closed nowhere dense subsets of $2^\omega$ is not Tukey reducible to any density-like ideal $\I$ (that is, there is no function $f:\mathrm{NWD}\to\I$ such that for each $A\in\I$ there exists $B\in\mathrm{NWD}$ for which $f(X)\subseteq A$ implies $X\subseteq B$, i.e., preimages of bounded sets are bounded). 
In particular, this works for $\mathcal{Z}$ (since it is a density-like ideal), thus answering 
%Since $\mathcal{Z}$ is density-like, this result answered 
old questions of Isbell from 1972 and Fremlin from 1991.

It is known that if $\varphi$ is density-like and $\mathrm{Exh}(\varphi)=\mathrm{Exh}(\psi)$, for some lscsm $\psi$, then $\psi$ is density-like too. In addition, tall $F_\sigma$ P-ideals are not density-like, see \cite[Fact 5.1]{MR3436368}, and there exists a nontall $F_\sigma$ P-ideal which is not density-like, see \cite{Matrai}. 

On the one hand, every generalized density ideal $\mathcal{I}=\mathrm{Exh}(\varphi_{\bm{\mu}})$ is a density-like ideal (indeed, $\varphi_{\bm{\mu}}$ is a density-like lscsm).  On the other hand, the converse has been asked in \cite[Question 5.11]{MR3436368}: 
\begin{question}\label{q:questiondensitylike}
Is there a density-like ideal which is not a generalized density ideal?
\end{question}

This question is closely connected to the notion of representability of ideals in Polish Abelian groups and in Banach spaces. Following \cite{MR3436368}, we say that an ideal $\I$ on $\omega$ is representable in a Polish Abelian group $X$ if there is a function $f:\omega\to X$ such that 
$$
\textstyle 
A\in\I\ \Longleftrightarrow\ \sum_{n\in A}f(n)\text{ is unconditionally convergent in }X.
$$
By \cite[Theorems 4.1 and 4.4]{MR3436368}, an ideal is representable in some Polish Abelian group if and only if it is an analytic P-ideal, and it is representable in some Banach space if and only if it is a nonpathological analytic P-ideal (cf. Remark \ref{rmk:nonpathological}). Moreover, for instance, it is known that an ideal is representable in $\mathbb{R}^\omega$ if and only if it is an intersection of countably many summable ideals \cite[Example 3.8]{MR3436368} (i.e., ideals of the form $\I_f:=\left\{A\subseteq\omega:\ \sum_{n\in A}f(n)<\infty\right\}$ for some $f:\omega\to[0,\infty)$ such that $\sum_{n}f(n)=\infty$); for more on this notion see \cite{MR3436368}. It is worth mentioning that P. Borodulin-Nadzieja and B. Farkas, using representability of ideals in Banach spaces, constructed a new example of a Banach space \cite[Example 5.9]{Wroclaw2}, and strengthened Mazur's Lemma \cite[Corollary 7.6]{Wroclaw2}, which is a basic tool in Banach space theory (they were able to specify the form of the convex combination in Mazur's Lemma). 
%Thus, 
%%the study of 
%%the interplay between 
%representability 
%%and theory of analytic P-ideals 
%seems to have a big potential.
%%looks promising.
This suggests that studying the interplay  between 
representability 
and theory of analytic P-ideals may have some relevant yet unexploited potential for the study of the geometry of Banach spaces.

Question \ref{q:questiondensitylike} is motivated by the problem of characterizing ideals which are representable in the Banach space $c_0$ \cite[Question 5.10]{MR3436368}. It is known that a tall $F_\sigma$ P-ideal is representable in $c_0$ if and only if it is a summable ideal \cite[Theorem 5.7]{MR3436368} and that all nonpathological generalized density ideals are representable in $c_0$ \cite[Example 4.2]{MR3436368}. 
%We will show that the answer is positive; in addition, every ideal of this type cannot be $F_\sigma$, see Proposition \ref{thm:casefsigma}. %The proofs follow in Section \ref{sec:proofs}. [Checking]}
%\begin{thm}\label{thm::main}
%There exists a density-like ideal which is not a generalized density ideal.
%\end{thm}
%In Section \ref{sec:characterizations}, we provide some characterizations of generalized density ideals and density-like ideals. The proof of Theorem \ref{thm::main} follows in Section \ref{sec:proof}. \textcolor{red}{[Change last part]}

The motivation of this work is to shed some light on \cite[Question 5.10]{MR3436368} by providing a large class of density-like ideals which are not generalized density ideals. In particular, we give a positive answer to Question \ref{q:questiondensitylike}.
\begin{thm}\label{thm::main}
There exists a 
%\textup{(}nonpathological\textup{)} 
density-like ideal which is not a generalized density ideal.
\end{thm}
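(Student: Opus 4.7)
The plan is to construct an explicit nonpathological density-like lscsm $\varphi$ on $\omega$ whose exhaustive ideal $\Exh(\varphi)$ provably cannot be written as $\Exh(\sup_n \nu_n)$ for any sequence $(\nu_n)$ of lscsms with pairwise disjoint finite supports.

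First, I would partition $\omega$ into finite intervals $(I_n)$ of rapidly increasing size (doubling, say), and define $\varphi$ as the pointwise supremum of a carefully chosen countable family of finitely supported probe measures $\mu_{n,k}$. The crucial design choice is that the supports of the $\mu_{n,k}$ are arranged to \emph{overlap} several intervals $I_m$ at once, in a chain- or tree-like pattern, and with weights tuned so that a single interval alone never witnesses a large value of $\varphi$. This gives a lscsm (being a pointwise supremum), nonpathological (being a supremum of measures), and should equip $\Exh(\varphi)$ with a long-range structural feature absent from any generalized density ideal.

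Second, I would verify that $\varphi$ is density-like by a Ramsey/pigeonhole argument. For fixed $\varepsilon>0$, choose $\delta$ depending on the growth rate of $|I_n|$. Given pairwise disjoint finite $(F_m)$ with $\varphi(F_m)<\delta$, the interval projections $(F_m\cap I_n)_n$ must be sparse on average; thinning to an infinite subset $J\subseteq \omega$ where these projections distribute thinly across the coupling pattern, one deduces that each probe $\mu_{n,k}$ is hit negligibly on $\bigcup_{m\in J}F_m$, whence $\varphi(\bigcup_{m\in J}F_m)<\varepsilon$.

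Third and most delicate, I would prove that $\Exh(\varphi)$ is not a generalized density ideal. Suppose for contradiction that $\Exh(\varphi)=\Exh(\sup_n\nu_n)$ with $(\nu_n)$ having pairwise disjoint finite supports $D_n$. Because the Polish topology induced on this analytic P-ideal is canonical, the metrics associated with $\varphi$ and with $\sup_n\nu_n$ are uniformly equivalent modulo finite sets, yielding explicit moduli relating one submeasure to the other. I would then diagonalize against $(D_n)$: using the coupling structure of the $\mu_{n,k}$, one selects, for each $n$, a finite set $A_n$ spread across several $D_k$'s so that $\sum_n A_n$ is negligible for $\varphi$ yet has $\limsup_k \nu_k(A\cap D_k)$ bounded below by a positive constant, a contradiction.

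The main obstacle lies in this last step: the coupling in $\varphi$ must be rigid enough to defeat \emph{every} legitimate choice of $(\nu_n,D_n)$. I expect that the witness will be a strengthened quantitative invariant of $\Exh(\varphi)$ -- a refined density-like modulus that any generalized density representation would be forced, by the disjointness of the $D_n$, to satisfy with a uniformity our construction deliberately fails. Producing this invariant and calibrating the probe weights $\mu_{n,k}$ accordingly is the technical heart of the argument.
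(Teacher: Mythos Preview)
Your outline is a plan rather than a proof: none of the three steps is actually carried out, and the decisive construction (the ``chain- or tree-like'' pattern of probe measures and their weights) is left as a placeholder. More seriously, the contradiction mechanism in your third step is internally inconsistent as written. You propose to build $A=\bigcup_n A_n\in\Exh(\varphi)$ with $\limsup_k\nu_k(A)>0$; but under the standing assumption $\Exh(\varphi)=\Exh(\sup_n\nu_n)$ this is impossible, since $A\in\Exh(\sup_n\nu_n)$ forces $\nu_k(A)\to 0$. Even if one swaps the roles, the leverage points the wrong way: your overlapping probes make $\varphi$ \emph{larger} on sets spread across intervals, hence make $\Exh(\varphi)$ \emph{smaller}, so the coupling helps you produce sets \emph{outside} $\Exh(\varphi)$ that a disjoint-support representation might wrongly place inside, not the reverse. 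The appeal to ``uniform equivalence of the induced metrics'' is also too coarse: equality of the Polish topologies holds for \emph{every} pair of lscsms generating the same exhaustive ideal and therefore cannot by itself separate generalized density ideals from merely density-like ones. You would need a genuinely sharper invariant; the paper eventually isolates one in Theorem~\ref{char}, but that is a substantial piece of work, not a remark.

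The paper's own proof sidesteps all of this with a two-line construction. For any tall density-like ideal $\I$ (e.g.\ $\mathcal{Z}$) set $\widehat{\I}:=h[(\emptyset\times\Fin)\cap(\I\times\emptyset)]$. Density-likeness transfers from $\I$ to $\widehat{\I}$ almost formally through the lscsm $\lambda(A)=\varphi(\{m:h^{-1}[A]_{(m)}\neq\emptyset\})$. For the negative part one chooses a diagonal $X$ meeting each column $h[\{n\}\times\omega]$ once and each putative support $G_n=\mathrm{supp}(\mu_n)$ at most once; then $X\notin\widehat{\I}$, so infinitely many $\mu_{m_t}(X)\ge\varepsilon$, each witnessed by a single point $y_t$. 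No infinite subset of $Y=\{y_t\}$ can lie in $\Exh(\sup_n\mu_n)=\widehat{\I}$, yet $Y$ meets every column finitely, and tallness of $\I$ then yields an infinite subset of $Y$ that \emph{does} lie in $\widehat{\I}$ --- contradiction. No weight calibration, no Ramsey thinning, no metric comparison is needed.
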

%In addition, every ideal of this type cannot be $F_\sigma$, see Proposition \ref{thm:casefsigma} below. 
More precisely, 
our main contributions are:
\begin{enumerate}[label=(\roman*)]
\item There exist uncountably many nonpathological, nontall, and pairwise nonisomorphic density-like ideals which are not generalized density ideals, see Theorem \ref{cor:2omega}; % in Section \ref{sec:nontall};
%, see Corollary \ref{cor:2omega} in Section \ref{sec:nontall};
\item There exist uncountably many nonpathological, tall, and pairwise nonisomorphic density-like ideals which are not generalized density ideals, see Theorem \ref{thm:mainsecondparttall}; % in Section \ref{sec:tall}.
\item A characterization of generalized density ideals which is reminiscent of the definition of density-like ideals, 
%which provides sufficient conditions for a density-like ideal to be necessarily a generalized density ideal, 
see Theorem \ref{char}.
%We provide several characterizations of density-like ideals and generalized density ideals, \textcolor{red}{()}.
\end{enumerate}

%\textcolor{red}{[Check]} In Section \ref{sec:characterizations}, we provide some characterizations of generalized density ideals and density-like ideals. The proofs of 
%Theorem \ref{thm::main}, Theorem \textcolor{red}{[Add]}, and Theorem \textcolor{red}{[Add]} follow 
%%our main results follow 
%in Section \ref{sec:proof}. \textcolor{red}{[Fix]}
%
%\textcolor{red}{Lastly, we provide sufficient conditions for a density-like ideal to be necessarily a generalized density ideal.} 

%\textcolor{red}{[To be added: \cite{MR2078923} and ack Jacek.]}

%Yesterday I talked to Jacek Tryba and he told me that ideals of the form (empty x Fin) \cap (I x empty) are considered in the attached file (Definition 2.1). 

%It is shown there that if I is analytic P-ideal, then so is (empty x Fin) \cap (I x empty) (see Lemma 3.6), however without using lscsms (so we should mention this result, but our proof must stay). 
%
%Also, from this paper follows that there are 2^omega many no isomorphic such ideals, but the proof is complicated, so in this case also our proof can stay.

%%%%%%%%%%%%%%%%%%%%%%%%%%%%%%%%%

%\section{Preliminaries and Characterizations}\label{sec:characterizations}
\section{Preliminaries}\label{sec:characterizations}

Given (not necessarily proper or admissible) ideals $\mathcal{I}, \mathcal{J}$ on $\omega$, we let their \emph{disjoint sum} and \emph{Fubini product} be 
$$
\mathcal{I} \oplus \mathcal{J}:=\left\{B\subseteq 2\times\omega: B_{(0)} \in \mathcal{I},B_{(1)} \in \mathcal{J} \right\},
$$
$$
\mathcal{I} \times \mathcal{J}:=\left\{B\subseteq \omega^2: \{m \in \omega: B_{(m)}\notin \mathcal{J}\} \in \mathcal{I} \right\},
$$
where $B_{(m)}:=\{k \in \omega: (m,k) \in B\}$. 
Then $\mathcal{I}\times \mathcal{J}$ is an ideal on $\omega^2$. 
%We identify ideals on $\omega^2$ with ideals on $\omega$ through the bijection $h: \omega^2 \to \omega$ defined by 
%$
%h((x,y))=2^x(2y+1)-1
%$ 
%for all $(x,y) \in \omega^2$.
We identify ideals on $\omega^2$ with ideals on $\omega$ through the bijection 
%$
%h: \omega^2 \to \omega: (x,y)\mapsto 2^x(2y+1)-1.
%$ 
$h: \omega^2 \to \omega$ defined by 
%$h(x,y):=2^{x}(2y+1)-1$ for all $(x,y) \in \omega^2$. 
\begin{equation}\label{eq:bijectionh}
\forall (x,y) \in \omega^2,\quad h(x,y):=2^{x}(2y+1)-1.
\end{equation}
To ease the notation, we define the families $\mathcal{F}_{\mathrm{disj}}$, $\mathcal{F}_{\mathrm{incr}}$, $\mathcal{F}_{\mathrm{int}}$ of sequences of nonempty finite sets which are, respectively, pairwise disjoint, increasing, and increasing intervals:
\begin{displaymath}
\begin{split}
%\mathcal{F}_{\mathrm{disj}}:=&\{(F_n) \in \mathrm{Fin}^\omega: \forall \{i,j\} \in [\omega]^2, F_i \cap F_j=\emptyset\},\\
%\mathcal{F}_{\mathrm{incr}}:=&\{(F_n) \in \mathrm{Fin}^\omega: \forall n \in \omega, \max F_n+1\le \min F_{n+1}\},\\
%\mathcal{F}_{\mathrm{int}}:=&\{(F_n) \in \mathrm{Fin}^\omega: \forall n \in \omega, F_n \text{ is an interval}\}.
%\mathcal{F}_{\mathrm{disj}}:=&\{(F_n) \in (\mathrm{Fin}\setminus \{\emptyset\})^\omega: \forall \{i,j\} \in [\omega]^2, F_i \cap F_j=\emptyset\},\\
%\mathcal{F}_{\mathrm{incr}}:=&\{(F_n) \in (\mathrm{Fin}\setminus \{\emptyset\})^\omega: \forall n \in \omega, \max F_n+1\le \min F_{n+1}\},\\
%\mathcal{F}_{\mathrm{int}}:=&\{(F_n) \in (\mathrm{Fin}\setminus \{\emptyset\})^\omega: \forall n \in \omega, F_n \text{ is an interval}\}.
\mathcal{F}_{\mathrm{disj}}:=&\{(F_n) \in (\mathrm{Fin}\setminus \{\emptyset\})^\omega: \forall \{i,j\} \in [\omega]^2, F_i \cap F_j=\emptyset\},\\
\mathcal{F}_{\mathrm{incr}}:=&\{(F_n) \in \mathcal{F}_{\mathrm{disj}}: \forall n \in \omega, \max F_n+1\le \min F_{n+1}\},\\
\mathcal{F}_{\mathrm{int}}:=&\{(F_n) \in \mathcal{F}_{\mathrm{incr}}: \forall n \in \omega, F_n \text{ is an interval}\}.
\end{split}
\end{displaymath}
%
%
%define the following families of sequences of nonempty finite sets
%$$
%\mathcal{F}_{\mathrm{disj}}:=\{(F_n) \in (\mathrm{Fin}\setminus \emptyset)^\omega: \forall \{a,b\} \in [\omega]^2, F_a \cap F_b=\emptyset\},
%$$
%increasing finite sets
%$$
%\mathcal{F}_{\mathrm{incr}}:=\{(F_n) \in (\mathrm{Fin}\setminus \emptyset)^\omega: \forall n \in \omega, \max F_n+1\le \min F_{n+1}\},
%$$
%and increasing intervals
%$$
%\mathcal{F}_{\mathrm{int}}:=\{(F_n) \in (\mathrm{Fin}\setminus \emptyset)^\omega: \forall n \in \omega, F_n \text{ is an interval}\}.
%$$
In particular, $\mathcal{F}_{\mathrm{int}}\subseteq \mathcal{F}_{\mathrm{incr}}\subseteq \mathcal{F}_{\mathrm{disj}}$. %\textcolor{red}{[Check whether we really need all of them]}

We start with some characterizations of generalized density ideals, cf. also Proposition \ref{thm:casefsigma} and Theorem \ref{char} below.
\begin{prop}\label{prop:supportgeneralizeddensityideal}
Let $\mathcal{I}$ be an ideal. Then the following are equivalent:
\begin{enumerate}[label={\rm (\textsc{g}\arabic{*})}]
\item  \label{item:g1} $\mathcal{I}=\mathrm{Exh}(\varphi_{\bm{\mu}})$ for a sequence $\bm{\mu}=(\mu_n)$ of lscsms with $(\mathrm{supp}(\mu_n)) \in \mathcal{F}_{\mathrm{int}}$\textup{;}
\item \label{item:g2} $\mathcal{I}=\mathrm{Exh}(\varphi_{\bm{\mu}})$ for a sequence $\bm{\mu}=(\mu_n)$ of lscsms with $(\mathrm{supp}(\mu_n)) \in \mathcal{F}_{\mathrm{incr}}$\textup{;}
\item \label{item:g3} $\mathcal{I}=\mathrm{Exh}(\varphi_{\bm{\mu}})$ for a sequence $\bm{\mu}=(\mu_n)$ of lscsms with $(\mathrm{supp}(\mu_n)) \in \mathcal{F}_{\mathrm{disj}}$ \textup{(}that is, $\mathcal{I}$ is a generalized density ideal\textup{)}\textup{;}
\item  \label{item:g4} $\mathcal{I}=\mathrm{Exh}(\varphi_{\bm{\mu}})$ for a sequence $\bm{\mu}=(\mu_n)$ of bounded lscsms such that 
%$\{n \in \omega: k \in \mathrm{supp}(\mu_n)\} \in \mathrm{Fin}$ for all $k \in \omega$. 
$$
\forall k \in \omega,\quad \{n \in \omega: k \in \mathrm{supp}(\mu_n)\} \in \mathrm{Fin}\textup{.}
$$ 
\end{enumerate}
\end{prop}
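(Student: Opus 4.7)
The natural strategy is to close the cycle (G1) $\Rightarrow$ (G2) $\Rightarrow$ (G3) $\Rightarrow$ (G4) $\Rightarrow$ (G1). The first three implications are immediate. Indeed, the nested chain $\mathcal{F}_{\mathrm{int}} \subseteq \mathcal{F}_{\mathrm{incr}} \subseteq \mathcal{F}_{\mathrm{disj}}$ shows that the same witnessing sequence $\bm{\mu}$ transfers from the stronger to the weaker support condition. For (G3) $\Rightarrow$ (G4), pairwise disjoint supports put each $k \in \omega$ in at most one $\mathrm{supp}(\mu_n)$, and any lscsm $\mu$ with finite support $F$ is automatically bounded via $\mu(\omega) = \mu(F) \le \sum_{k \in F} \mu(\{k\}) < \infty$.

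The heart of the proof is (G4) $\Rightarrow$ (G1). Given $\bm{\mu} = (\mu_n)$ as in (G4), I plan to construct by induction on $j \in \omega$ a strictly increasing sequence $0 = a_0 < a_1 < \cdots$ of integers together with lscsms $\nu_j$ with $\mathrm{supp}(\nu_j) \subseteq I_j := [a_j, a_{j+1})$ such that $\mathrm{Exh}(\varphi_{\bm{\mu}}) = \mathrm{Exh}(\sup_j \nu_j)$; a minor modification (adding a small diffuse term supported on $I_j$) then forces $(\mathrm{supp}(\nu_j)) \in \mathcal{F}_{\mathrm{int}}$. The inductive step leverages two features of (G4): first, the finite-overlap hypothesis makes $E_j := \{n : \mathrm{supp}(\mu_n) \cap a_j \ne \emptyset\}$ finite at each stage, so only finitely many $\mu_n$ interact with any initial segment; second, lower semicontinuity combined with boundedness gives $\mu_n(m) \nearrow \mu_n(\omega) < \infty$ as $m \to \infty$, so $a_{j+1}$ can be chosen large enough that $\mu_n(a_{j+1}) \ge \mu_n(\omega) - 2^{-j}$ for all $n \in E_j$, while also ensuring $j \in E_{j+1}$ so that every index is eventually activated. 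Define $\nu_j(A) := \sup_n \mu_n(A \cap I_j)$.

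Verifying the identity, the inclusion $\mathrm{Exh}(\varphi_{\bm{\mu}}) \subseteq \mathrm{Exh}(\sup_j \nu_j)$ is immediate from $\sup_j \nu_j \le \varphi_{\bm{\mu}}$. The reverse inclusion is the main technical obstacle. For $A \in \mathrm{Exh}(\sup_j \nu_j)$ and $\epsilon > 0$, pick $F_0$ finite with $\sup_j \nu_j(A \setminus F_0) < \epsilon/2$, choose $J$ large with $F_0 \subseteq [0, a_J)$, and set $F := F_0 \cup [0, a_J)$. For $n \notin E_{a_J}$, we have $\mathrm{supp}(\mu_n) \subseteq \bigcup_{j \ge J} I_j$, so subadditivity gives $\mu_n(A \setminus F) \le \sum_{j \ge J} \mu_n(A \cap I_j) \le \sum_{j \ge J} \nu_j(A \setminus F_0)$; for the finitely many $n \in E_{a_J}$, one augments $F$ by additional finite sets handling each $\mu_n$ individually via boundedness. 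The principal obstacle is that for general submeasures the inequality $\mu_n(a_{j+1}) \ge \mu_n(\omega) - 2^{-j}$ does \emph{not} yield $\mu_n(\omega \setminus a_{j+1}) \le 2^{-j}$ (the submeasure-vs-measure gap), so the tail $\sum_{j \ge J} \nu_j(A \setminus F_0)$ is not automatically summable; closing this gap requires refining the definition of $\nu_j$ (e.g., by weighting or truncating the contributing pieces $\mu_n \upharpoonright I_j$ with a $2^{-j}$-geometric bound inherited from the stage-$j$ error) to ensure the requisite summability.
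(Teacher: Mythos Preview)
Your handling of (G1)$\Rightarrow$(G2)$\Rightarrow$(G3)$\Rightarrow$(G4) is fine. The issue is entirely in your direct attack on (G4)$\Rightarrow$(G1), and the gap you yourself flag is real and is \emph{not} closed by the proposed refinement.

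The core difficulty is that in (G4) the individual supports $\mathrm{supp}(\mu_n)$ may be infinite; only the ``dual'' condition (each $k$ lies in finitely many supports) is assumed. With $\nu_j(A)=\sup_n\mu_n(A\cap I_j)$, the bound $\mu_n(A\setminus F)\le\sum_{j\ge J}\mu_n(A\cap I_j)$ is an infinite sum for every such $n$, and knowing that each summand is below $\varepsilon/2$ gives nothing. Your suggested fix---weighting $\nu_j$ by a $2^{-j}$ factor or truncating---does not work: any such weighting that makes the tail summable also changes $\mathrm{Exh}(\sup_j\nu_j)$ (it becomes strictly larger than $\mathrm{Exh}(\varphi_{\bm\mu})$), and, as you correctly note, for a genuine submeasure the inequality $\mu_n(a_{j+1})\ge\mu_n(\omega)-2^{-j}$ places no bound whatsoever on $\mu_n(\omega\setminus a_{j+1})$, so there is no ``stage-$j$ error'' to inherit. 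In short, the obstacle you name is exactly the obstruction, and the sketch does not get past it.

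The paper avoids this by \emph{not} going directly from (G4) to (G1). It first invokes (G4)$\Rightarrow$(G3) from the literature (Borodulin--Nadzieja, Farkas, Plebanek), thereby reducing to the case where each $\mathrm{supp}(\mu_n)$ is \emph{finite}. Only then does it build intervals, and the key point is that the intervals $(V_n)$ are constructed so that every finite support $S_k$ is contained in the union of \emph{two} consecutive intervals $V_n\cup V_{n+1}$. This yields the two-term estimate
\[
\mu_k(A)\le \mu_k(A\cap V_n)+\mu_k(A\cap V_{n+1})\le \nu_n(A)+\nu_{n+1}(A),
\]
and the summability problem simply never arises. That ``two-interval'' localisation is precisely what fails when supports are allowed to be infinite, which is why the reduction (G4)$\Rightarrow$(G3) has to be done first and is the genuinely nontrivial step you are missing.
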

\begin{proof}
It is clear that \ref{item:g1} $\implies$ \ref{item:g2} $\implies$ \ref{item:g3} $\implies$ \ref{item:g4}. 

\ref{item:g4} $\implies$ \ref{item:g3} See \cite[Proposition 5.4]{MR3436368}. 
%Accordingly with their definition!

\ref{item:g3} $\implies$ \ref{item:g1} Suppose that $\mathcal{I}=\mathrm{Exh}(\varphi_{\bm{\mu}})$ for some sequence $\bm{\mu}=(\mu_n)$ of lscsms such that $(S_n) \in \mathcal{F}_{\mathrm{disj}}$, 
%and $\mu_n(\omega) \not\to 0$, 
where $S_n:=\mathrm{supp}(\mu_n)$ for each $n$. Note that we can assume without loss of generality that $S:=\bigcup_n S_n=\omega$. Indeed, in the opposite, if $S^c$ is finite then it is sufficient to replace $\mu_0(A)$ with $\mu_0(A)+|A\cap S^c|$ for all $A\subseteq \omega$. Otherwise, let $(x_n)$ be the infinite increasing enumeration of $S^c$ and replace every $\mu_n(A)$ with $\mu_n(A)+\frac{1}{n}|A\cap \{x_n\}|$. This is possible, considering that 
$$
\textstyle \mathcal{I}=\mathrm{Exh}(\varphi_{\bm{\mu}})=\mathrm{Exh}(\limsup_{n}\mu_n).
$$

%Note that, in both cases, $\mathcal{I}$ does not change, thanks to \eqref{eq:generalizeddensity}.
%\textstyle \mathcal{I}=\mathrm{Exh}\left(\sup_{n\ge k}\mu_n\right)=\mathrm{Exh}\left(\limsup_n \mu_n\right)=\{A\subseteq \omega: \lim_n\mu_n(A)= 0\}.

At this point, let $(T_n)\in \mathrm{Fin}^\omega$ be the sequence defined recursively as it follows: set $T_0:=[0,\max S_0]$ and, for each $n\in \omega$, set 
$$%\begin{equation}\label{eq:V_n}
\textstyle T_{n+1}:=\left(\max \bigcup_{i\le n}T_i,\, \max\left(S_{n+1}\cup \bigcup \{S_k: \min S_k \le \max \bigcup_{i\le n}T_i\}\right)\right].
$$%\end{equation}
Observe that $(T_n)$ is a sequence of (possibly empty) pairwise disjoint finite intervals such that $\bigcup_{n}T_n=\omega$. Moreover, for each $n \in \omega$ there exists $j=j(n)\in \omega$ with $S_n\subseteq T_{j(n)}\cup T_{j(n)+1}$: indeed, if $j(n)$ is the minimal integer such that $S_n\cap T_{j(n)}\neq\emptyset$ (so that $T_{j(n)}\neq \emptyset$ and $\min S_n \le \max T_{j(n)}$), then 
$$
\textstyle \max(T_{j(n)+1})\geq\max\left(\bigcup\{S_k: \min S_k \le \max \bigcup_{i\le j(n)}T_i\}\right)\geq\max(S_n).
$$

Let $(V_n)$ be the biggest subsequence of $(T_n)$ with nonempty elements, so that $(V_n) \in \mathcal{F}_{\mathrm{int}}$, and define the sequence $\bm{\nu}=(\nu_n)$ of lscsms by
$$
\textstyle \forall n \in \omega, \forall A\subseteq \omega, \quad 
\nu_n(A):=\sup_k \mu_k(A \cap V_n)
$$
%for all $n \in \omega$ and $A\subseteq \omega$. 
Note that $\nu_n(A)=\sup_{k\geq n}\mu_k(A\cap V_n)$, since $S_k\cap V_n=\emptyset$ whenever $k<n$ (indeed $S_k\subseteq \bigcup_{n\le k} T_n\subseteq \bigcup_{n\le k}V_n$ for all $k \in \omega$). Moreover, it follows by construction that $\text{supp}(\nu_n)=V_n$ for each $n \in \omega$, hence it is sufficient to show that 
%$\mathrm{Exh}(\varphi_{\bm{\mu}})=\mathrm{Exh}(\varphi_{\bm{\nu}})$.
$\mathcal{I}=\mathrm{Exh}(\varphi_{\bm{\nu}})$.

%Fix $A\subseteq \omega$. 
On the one hand, it is clear that if $\mu_n(A)\to 0$ then 
$$
\textstyle \nu_n(A)=\sup_{k\ge n}\mu_k(A\cap V_n) \le \sup_{k\ge n}\mu_k(A) \to 0,
$$
hence $\mathcal{I}\subseteq \mathrm{Exh}(\varphi_{\bm{\nu}})$. 

On the other hand, suppose that $\nu_n(A)\to 0$ and fix $\varepsilon>0$. Then there exists $n_0\in \omega$ such that $\nu_n(A)\le \nicefrac{\varepsilon}{2}$ for all $n> n_0$. Let $k_0$ be the minimal integer such that $S_k\cap\bigcup_{n\le n_0}V_n=\emptyset$ for all $k\ge k_0$. Fix $k\ge k_0$ and $n\in \omega$ such that $S_k\subseteq V_n\cup V_{n+1}$ (hence, in particular, $n>n_0$). We conclude that
\begin{displaymath}
\begin{split}
\mu_k(A)&=\mu_k(A\cap S_k)\leq\mu_k(A\cap (V_n\cup V_{n+1}))\\
&\textstyle \leq\mu_k(A\cap V_n)+\mu_k(A\cap V_{n+1}) \leq\nu_n(A)+\nu_{n+1}(A)\le \frac{\varepsilon}{2}+\frac{\varepsilon}{2}=\varepsilon,
\end{split}
\end{displaymath}
which shows that $\mu_k(A)\to 0$, therefore $\mathrm{Exh}(\varphi_{\bm{\nu}}) \subseteq \mathcal{I}$. 
\end{proof}

Some additional notations are in order. Given a lscsm $\varphi$ and a real $\delta>0$, let $\mathcal{G}_{\varphi,\delta}$ be the set of sequences of subsets of $\omega$ with $\varphi$-value smaller than $\delta$, that is, 
$$
\mathcal{G}_{\varphi,\delta}:=\left\{(F_n) \in \mathcal{P}(\omega)^\omega: \forall n\in \omega, \,\,\varphi(F_n)< \delta\right\}.
$$

Let $\mathcal{I}$, $\mathcal{J}$ be ideals on $\omega$ and let $\mathcal{I}^+$ be the family of $\mathcal{I}$-positive sets, that is, $\{A\subseteq \omega: A\notin \mathcal{I}\}$. Following Solecki and Todorcevic \cite[p. 1892]{MR2134228}, we say that a separable metric space $X$ is $(\mathcal{I}^+,\mathcal{J})$\emph{-calibrated} if the following property holds: for each sequence $x=(x_n)$ in $X$ with $\Gamma_x(\mathcal{I}) \neq \emptyset$, there exists $A\in \mathcal{I}^+$ such that $\{x_n: n \in A\cap B\}$ is bounded for all $B \in \mathcal{J}$ (where $\Gamma_x(\mathcal{I})$ denotes the set of $\mathcal{I}$-cluster points of $x$, that is, the set of $\ell \in X$ such that $\{n \in \omega: x_n \in U\} \notin \mathcal{I}$ for all neighborhoods $U$ of $\ell$, cf. 
\cite{MR3883171}).
% \cite{MR3920799}). %\textcolor{red}{[Add characterizations]}

We continue with some characterizations of density-like ideals, see also \cite[Theorem 4.5]{MR3362232}. 

\begin{prop}\label{equiv:submeasuresdensitylike}
%Let $\mathcal{I}:=\mathrm{Exh}(\varphi)$ be an analytic P-ideal, for a lscsm $\varphi$. 
Let $\varphi$ be a lscsm and set $\mathcal{I}:=\mathrm{Exh}(\varphi)$. 
Then the following are equivalent:
\begin{enumerate}[label={\rm (\textsc{d}\arabic{*})}]
%\item \label{sub:1} For all $\varepsilon>0$ there exists $\delta>0$ for which, if $(F_n) \in \mathcal{I}^\omega \cap \mathcal{G}_{\varphi,\delta}$, then $\varphi\left(\bigcup_{i \in I}F_i\right)<\varepsilon$ for some $I \in [\omega]^\omega$;
%\item \label{sub:2} For all $\varepsilon>0$ there exists $\delta>0$ for which, if $(F_n) \in \mathrm{Fin}^\omega \cap \mathcal{G}_{\varphi,\delta}$, then $\varphi\left(\bigcup_{i \in I}F_i\right)<\varepsilon$ for some $I \in [\omega]^\omega$;
%\item \label{sub:3} For all $\varepsilon>0$ there exists $\delta>0$ for which, if $(F_n) \in \mathcal{F}_{\mathrm{disj}} \cap \mathcal{G}_{\varphi,\delta}$, then $\varphi\left(\bigcup_{i \in I}F_i\right)<\varepsilon$ for some $I \in [\omega]^\omega$ \textup{(}that is, $\mathcal{I}$ is density-like\textup{)};
%\item \label{sub:4} For all $\varepsilon>0$ there exists $\delta>0$ for which, if $(F_n) \in \mathcal{F}_{\mathrm{incr}} \cap \mathcal{G}_{\varphi,\delta}$, then $\varphi\left(\bigcup_{i \in I}F_i\right)<\varepsilon$ for some $I \in [\omega]^\omega$; 
%\item \label{sub:5} For all $\varepsilon>0$ there exists $\delta>0$ for which, if $(F_n) \in \mathcal{F}_{\mathrm{int}} \cap \mathcal{G}_{\varphi,\delta}$, then $\varphi\left(\bigcup_{i \in I}F_i\right)<\varepsilon$ for some $I \in [\omega]^\omega$; 
\item \label{sub:1} $\textstyle \forall \varepsilon>0, \exists \delta>0, \forall (F_n) \in \mathcal{I}^\omega \cap \mathcal{G}_{\varphi,\delta}, \exists I \in [\omega]^\omega, \varphi\left(\bigcup_{i \in I}F_i\right)<\varepsilon$\textup{;}
\item \label{sub:2} $\textstyle \forall \varepsilon>0, \exists \delta>0, \forall (F_n) \in \mathrm{Fin}^\omega \cap \mathcal{G}_{\varphi,\delta}, \exists I \in [\omega]^\omega, \varphi\left(\bigcup_{i \in I}F_i\right)<\varepsilon$\textup{;}
\item \label{sub:3} $\textstyle \forall \varepsilon>0, \exists \delta>0, \forall (F_n) \in \mathcal{F}_{\mathrm{disj}} \cap \mathcal{G}_{\varphi,\delta}, \exists I \in [\omega]^\omega, \varphi\left(\bigcup_{i \in I}F_i\right)<\varepsilon$ \textup{(}that is, $\mathcal{I}$ is density-like\textup{)}\textup{;}
\item \label{sub:4} $\textstyle \forall \varepsilon>0, \exists \delta>0, \forall (F_n) \in \mathcal{F}_{\mathrm{incr}} \cap \mathcal{G}_{\varphi,\delta}, \exists I \in [\omega]^\omega, \varphi\left(\bigcup_{i \in I}F_i\right)<\varepsilon$\textup{;}
\item \label{sub:5} $\mathcal{I}$ is $((\mathrm{Fin}\times \mathrm{Fin})^+, \emptyset\times \mathrm{Fin})$-calibrated\textup{.}
\end{enumerate}
\end{prop}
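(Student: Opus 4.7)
The strategy is to verify the cycle \ref{sub:1}$\Rightarrow$\ref{sub:2}$\Rightarrow$\ref{sub:3}$\Rightarrow$\ref{sub:4}$\Rightarrow$\ref{sub:3}$\Rightarrow$\ref{sub:2}$\Rightarrow$\ref{sub:1}, together with the equivalence \ref{sub:3}$\Leftrightarrow$\ref{sub:5}. The implications \ref{sub:1}$\Rightarrow$\ref{sub:2}$\Rightarrow$\ref{sub:3}$\Rightarrow$\ref{sub:4} are immediate from the inclusions $\mathcal{F}_{\mathrm{incr}}\subseteq\mathcal{F}_{\mathrm{disj}}\subseteq\mathrm{Fin}^\omega\subseteq\mathcal{I}^\omega$ (the last using admissibility of $\mathcal{I}$), since shrinking the universally quantified class only weakens the statement. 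For \ref{sub:4}$\Rightarrow$\ref{sub:3}, from any $(F_n)\in\mathcal{F}_{\mathrm{disj}}$ I would greedily extract an infinite subsequence lying in $\mathcal{F}_{\mathrm{incr}}$: once indices $n_0<\cdots<n_k$ are chosen, only finitely many of the remaining $F_n$ can meet $[0,\max F_{n_k}]$ by pairwise disjointness, and \ref{sub:4} applied to this increasing subsequence (with the same $\delta$) supplies the required $I$.

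For \ref{sub:3}$\Rightarrow$\ref{sub:2}, fix $\varepsilon>0$, let $\delta_0>0$ witness \ref{sub:3} at threshold $\varepsilon/2$, and set $\delta:=\min\{\delta_0,\varepsilon/2\}$. Given $(F_n)\in\mathrm{Fin}^\omega\cap\mathcal{G}_{\varphi,\delta}$, the $\Delta$-system lemma yields an infinite $J\subseteq\omega$ and a root $R$ with $F_i\cap F_j=R$ for distinct $i,j\in J$, whence $\varphi(R)\le\varphi(F_i)<\varepsilon/2$ by monotonicity. If $F_j=R$ for infinitely many $j\in J$, the corresponding $I$ has $\bigcup_I F_i=R$ with $\varphi(R)<\varepsilon$; otherwise, setting $J':=\{j\in J:F_j\setminus R\ne\emptyset\}$ (infinite), the family $(F_j\setminus R)_{j\in J'}$ belongs to $\mathcal{F}_{\mathrm{disj}}\cap\mathcal{G}_{\varphi,\delta_0}$, so \ref{sub:3} furnishes an infinite $I\subseteq J'$ with $\varphi(\bigcup_I(F_i\setminus R))<\varepsilon/2$, and subadditivity gives $\varphi(\bigcup_I F_i)\le\varphi(R)+\varphi(\bigcup_I(F_i\setminus R))<\varepsilon$. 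For \ref{sub:2}$\Rightarrow$\ref{sub:1}, fix $\varepsilon>0$ and let $\delta$ witness \ref{sub:2} at $\varepsilon/2$; given $(F_n)\in\mathcal{I}^\omega\cap\mathcal{G}_{\varphi,\delta}$, the exhaustiveness $\|F_n\|_\varphi=0$ produces a finite $R_n\subseteq F_n$ with $\varphi(F_n\setminus R_n)<\varepsilon\cdot 2^{-(n+2)}$; since $\varphi(R_n)\le\varphi(F_n)<\delta$, applying \ref{sub:2} to $(R_n)$ yields an infinite $I$ with $\varphi(\bigcup_I R_i)<\varepsilon/2$, whence $\varphi(\bigcup_I F_i)\le\varphi(\bigcup_I R_i)+\sum_{i\in I}\varphi(F_i\setminus R_i)<\varepsilon/2+\varepsilon/2=\varepsilon$.

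The equivalence \ref{sub:3}$\Leftrightarrow$\ref{sub:5} connects the combinatorial condition with calibration of the Polish group $(\mathcal{I},d_\varphi)$ and is the most delicate part; the relevant notion of ``bounded'' for $E\subseteq\mathcal{I}$ is that $\bigcup E\in\mathcal{I}$, i.e., uniform $\varphi$-concentration of $E$ on a finite support. For \ref{sub:3}$\Rightarrow$\ref{sub:5}, given a sequence $(A_n)$ in $\mathcal{I}$ with $\mathrm{Fin}\times\mathrm{Fin}$-cluster point $\ell$, I would diagonally choose rows $m_0<m_1<\cdots$ and, along each, an infinite set $T_k$ with $d_\varphi(A_{m_k,j},\ell)<1/k$ for $j\in T_k$; density-likeness applied to the finite approximations of the symmetric differences $A_{m_k,j}\triangle\ell$ then permits a further thinning of the $T_k$'s producing a $(\mathrm{Fin}\times\mathrm{Fin})^+$-set $A\subseteq\bigcup_k\{m_k\}\times T_k$ along which, for any $B\in\emptyset\times\mathrm{Fin}$, the union $\bigcup\{A_n:n\in A\cap B\}$ remains in $\mathcal{I}$. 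Conversely, for \ref{sub:5}$\Rightarrow$\ref{sub:3}, a failure of \ref{sub:3} provides, for each $m$, a sequence $(F_k^m)\in\mathcal{F}_{\mathrm{disj}}$ with $\varphi(F_k^m)<1/(m+1)$ but $\varphi(\bigcup_{k\in I}F_k^m)\ge\varepsilon$ for all infinite $I$; assembling them via $h$ into a sequence $(A_n)$ in $\mathcal{I}$ with cluster point $\emptyset$, for any candidate $A\in(\mathrm{Fin}\times\mathrm{Fin})^+$ one selects finite $B_{(m)}\subseteq A_{(m)}$ (for each $m$ with $A_{(m)}$ infinite) so that $\varphi(\bigcup_{k\in B_{(m)}}F_k^m)\ge\varepsilon/2$—possible by the lower semicontinuity of $\varphi$ on increasing unions—and the $\varphi$-mass accumulated over infinitely many rows forces $\bigcup\{A_n:n\in A\cap B\}\notin\mathcal{I}$, contradicting \ref{sub:5}. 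The main obstacle is identifying and matching the precise notion of boundedness with the combinatorial construction, and I would model the argument on \cite[Theorem~4.5]{MR3362232}.
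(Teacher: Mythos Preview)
Your overall structure is sound and more ambitious than the paper's proof, which simply records the trivial chain \ref{sub:1}$\Rightarrow$\ref{sub:2}$\Rightarrow$\ref{sub:3}$\Rightarrow$\ref{sub:4} and then cites \cite[Lemma~6.7]{MR2134228} for \ref{sub:3}$\Leftrightarrow$\ref{sub:5} and \cite[Lemma~3.1]{MR2787694} for \ref{sub:4}$\Rightarrow$\ref{sub:1}. Your arguments for \ref{sub:4}$\Rightarrow$\ref{sub:3} and \ref{sub:2}$\Rightarrow$\ref{sub:1} are correct (for the latter, note that countable subadditivity of a lscsm follows from lower semicontinuity and finite subadditivity, so the estimate $\varphi(\bigcup_I(F_i\setminus R_i))\le\sum_i\varphi(F_i\setminus R_i)$ is justified).

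The genuine gap is in \ref{sub:3}$\Rightarrow$\ref{sub:2}. The $\Delta$-system lemma does \emph{not} apply to an arbitrary countable family of finite sets: it requires the sets to have bounded cardinality. The sequence $F_n=\{0,1,\ldots,n\}$ is a counterexample---for any infinite $J\subseteq\omega$ and any putative root $R$, one has $R\subseteq F_{\min J}$, yet $F_i\cap F_j=F_{\min\{i,j\}}$ varies over pairs in $J$. Since nothing in the hypothesis $\varphi(F_n)<\delta$ bounds $|F_n|$, your extraction of an infinite $\Delta$-system is not available, and the remainder of that paragraph cannot be executed. This is precisely the implication the paper does not attempt in-house: the passage from increasing disjoint families back to arbitrary sequences in $\mathcal{I}^\omega$ (equivalently, \ref{sub:4}$\Rightarrow$\ref{sub:1}) is the content of \cite[Lemma~3.1]{MR2787694}, whose proof is more delicate than a sunflower argument. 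If you want a self-contained route, you would need to replace the $\Delta$-system step by the actual combinatorics of that lemma.

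Your sketch of \ref{sub:3}$\Leftrightarrow$\ref{sub:5} is at roughly the same level of detail as the paper's citation, though your $\neg$\ref{sub:3}$\Rightarrow\neg$\ref{sub:5} direction needs one more ingredient: having $\varphi(\bigcup_{k\in B_{(m)}}F_k^m)\ge\varepsilon/2$ for infinitely many $m$ does not by itself force $\bigcup\{F_k^m:(m,k)\in A\cap B\}\notin\mathcal{I}$, since those unions could all sit inside a fixed finite interval; one must also arrange that the finite blocks $\bigcup_{k\in B_{(m)}}F_k^m$ escape to infinity as $m$ grows, which requires an additional diagonalisation.
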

\begin{proof}
It is clear that \ref{sub:1} $\implies$ \ref{sub:2} $\implies$ \ref{sub:3} $\implies$ \ref{sub:4}. % $\implies$ \ref{sub:5}. 

\ref{sub:3} $\Longleftrightarrow$ \ref{sub:5} See \cite[Lemma 6.7]{MR2134228}.

%\textcolor{red}{[\ref{sub:5} $\implies$ \ref{sub:4}: is it true?]}. 

\ref{sub:4} $\implies$ \ref{sub:1} See \cite[Lemma 3.1]{MR2787694}.
\end{proof}

%\textcolor{red}{Another characterization of density-like ideals (within the class of P-ideals) can be found in \cite[Theorem 4.5]{MR3362232}.}

%\textcolor{red}{[Add all characterizations generalized density ideals]}

To conclude, every density-like ideal is a generalized density ideal, provided that, in addition, it is $F_\sigma$. % (see Section \ref{sec:conversegeneralizeddensitylike} for related results).
\begin{prop}\label{thm:casefsigma}
Let $\mathcal{I}$ be an $F_\sigma$ ideal. Then the following are equivalent:
\begin{enumerate}[label={\rm (\textsc{f}\arabic{*})}]
\item \label{item:b0} $\mathcal{I}$ is a generalized density ideal\textup{;}
\item \label{item:b1} $\mathcal{I}$ is a density-like ideal\textup{;}
\item \label{item:b2} $\mathcal{I}$ is $((\mathrm{Fin}\times \mathrm{Fin})^+, \emptyset\times \mathrm{Fin})$-calibrated\textup{;}
\item \label{item:b3} $\mathcal{I}=\mathrm{Fin}$ or $\mathcal{I}=\mathrm{Fin}\oplus \mathcal{P}(\omega)$\textup{.}
\end{enumerate}
\end{prop}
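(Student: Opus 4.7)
The plan is to close the cycle \ref{item:b3} $\Rightarrow$ \ref{item:b0} $\Rightarrow$ \ref{item:b1} $\Leftrightarrow$ \ref{item:b2} $\Rightarrow$ \ref{item:b3}. Three of these are essentially at hand: \ref{item:b0} $\Rightarrow$ \ref{item:b1} is the observation that every $\varphi_{\bm{\mu}}$ is density-like made just before Question \ref{q:questiondensitylike}, and \ref{item:b1} $\Leftrightarrow$ \ref{item:b2} is the equivalence \ref{sub:3} $\Leftrightarrow$ \ref{sub:5} of Proposition \ref{equiv:submeasuresdensitylike}. For \ref{item:b3} $\Rightarrow$ \ref{item:b0}, I would exhibit explicit witnesses: for $\mathcal{I} = \mathrm{Fin}$, let $\mu_n$ be the measure supported on $\{n\}$ with $\mu_n(\{n\}) = 1$; for $\mathcal{I}$ realized as $\{Y\subseteq\omega: Y\cap A\in\mathrm{Fin}\}$ on a partition $\omega = A\sqcup B$ with both sides infinite, enumerate $A = \{a_0,a_1,\ldots\}$ and put $\mu_n(\{a_n\}) = 1$, verifying in each case that $\mathrm{Exh}(\varphi_{\bm{\mu}})$ returns $\mathcal{I}$.

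The bulk of the work is \ref{item:b1} $\Rightarrow$ \ref{item:b3}. Since $\mathcal{I}$ is an $F_\sigma$ analytic P-ideal, I would first pick a lscsm $\varphi$ with $\mathcal{I} = \mathrm{Exh}(\varphi) = \mathrm{Fin}(\varphi)$ (the standard simultaneous representation of $F_\sigma$ analytic P-ideals, cf.~\cite[Sec.~1.2]{MR1711328}); by the invariance of density-likeness under the choice of representing submeasure recalled in the introduction, this $\varphi$ is density-like. Applying density-likeness with $\varepsilon = 1$ yields $\delta > 0$, and I would split $\omega = T\sqcup S$ with $T := \{n\in\omega: \varphi(\{n\})\ge\delta\}$ and $S := \omega\setminus T$.

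The first observation, immediate from $\|Y\|_\varphi = 0$, is that every $Y\in\mathcal{I}$ has $Y\cap T$ finite: otherwise, for every $F\in\mathrm{Fin}$, the set $(Y\cap T)\setminus F$ would contain some $t$ with $\varphi(\{t\})\ge\delta$, giving $\varphi(Y\setminus F)\ge\delta$ and violating $Y\in\mathrm{Exh}(\varphi)$. If $S$ is finite this forces $\mathcal{I}\subseteq\mathrm{Fin}$, so $\mathcal{I} = \mathrm{Fin}$. Assume therefore that $S$ is infinite; the conclusion then reduces to showing $S\in\mathcal{I}$, for then the observation above together with $\mathrm{Fin}\subseteq\mathcal{I}$ gives $\mathcal{I} = \{Y\subseteq\omega: Y\cap T\in\mathrm{Fin}\}$, the set $T$ must be infinite because $\omega\notin\mathcal{I}$, and therefore $\mathcal{I}\cong\mathrm{Fin}\oplus\mathcal{P}(\omega)$.

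To show $S\in\mathcal{I}$ I would argue by contradiction. If $S\notin\mathcal{I}$, then $\mathcal{I}\cap\mathcal{P}(S)$ is a proper ideal on $S$, and $\varphi\upharpoonright\mathcal{P}(S)$ witnesses that it is $F_\sigma$, a P-ideal, and density-like. Moreover, it is \emph{tall} on $S$: given an infinite $X = \{x_0,x_1,\ldots\}\subseteq S$, the disjoint singletons $(\{x_n\})_n$ all satisfy $\varphi(\{x_n\})<\delta$, so density-likeness produces an infinite $I\subseteq\omega$ with $\varphi(\{x_i: i\in I\}) < 1$; since $\mathrm{Fin}(\varphi) = \mathcal{I}$, this is an infinite element of $\mathcal{I}\cap\mathcal{P}(S)$ contained in $X$. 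This contradicts the fact recalled in the introduction that tall $F_\sigma$ P-ideals are never density-like. The main obstacle throughout is precisely securing the simultaneous representation $\mathrm{Exh}(\varphi) = \mathrm{Fin}(\varphi)$ at the start: without it, density-likeness yields only $\varphi(\{x_i:i\in I\}) < 1$, which does not immediately place the set in $\mathcal{I} = \mathrm{Exh}(\varphi)$.
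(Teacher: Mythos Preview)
Your argument is correct. The implications \ref{item:b3} $\Rightarrow$ \ref{item:b0} $\Rightarrow$ \ref{item:b1} $\Leftrightarrow$ \ref{item:b2} are handled exactly as in the paper (explicit Dirac-type witnesses, the remark before Question~\ref{q:questiondensitylike}, and Proposition~\ref{equiv:submeasuresdensitylike}). The only difference lies in closing the cycle: the paper obtains \ref{item:b2} $\Rightarrow$ \ref{item:b3} by quoting \cite[Proposition~6.8(b)]{MR2134228} as a black box, whereas you prove \ref{item:b1} $\Rightarrow$ \ref{item:b3} directly. Your route uses Solecki's simultaneous representation $\mathcal{I}=\mathrm{Exh}(\varphi)=\mathrm{Fin}(\varphi)$ for $F_\sigma$ P-ideals, the invariance of density-likeness under change of submeasure, and the fact (already recalled in the introduction from \cite[Fact~5.1]{MR3436368}) that tall $F_\sigma$ P-ideals are never density-like. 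The key observation---that the $\mathrm{Fin}(\varphi)$ representation upgrades the conclusion $\varphi(\bigcup_{i\in I}\{x_i\})<1$ of density-likeness to actual membership in $\mathcal{I}$---is exactly what makes tallness go through, and you identify this correctly. Your argument is more transparent within the paper's own framework, trading one external citation for another that is arguably closer to the definitions at hand; the paper's version is shorter but less illuminating about \emph{why} the $F_\sigma$ hypothesis forces such a rigid conclusion.
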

\begin{proof}
\ref{item:b0} $\implies$ \ref{item:b1} This is obvious.

\ref{item:b1} $\implies$ \ref{item:b2} See \cite[Lemma 6.7]{MR2134228}.%Lemma \ref{lem:calibrated}.

\ref{item:b2} $\implies$ \ref{item:b3} See \cite[Proposition 6.8(b)]{MR2134228}.

%\ref{item:b1} $\Longleftrightarrow$ \ref{item:b3} See \cite[Lemma 6.7 and Proposition 6.8(b)]{MR2134228}.

\ref{item:b3} $\implies$ \ref{item:b0} If $\mathcal{I}=\mathrm{Fin}$ then $\mathcal{I}=\mathrm{Exh}(\varphi_{\bm{\mu}})$, where $\bm{\mu}=(\mu_n)$ and each $\mu_n$ is the Dirac measure on $n \in \omega$. If $\mathcal{I}=\mathrm{Fin}\oplus \mathcal{P}(\omega)$ is represented on $\omega$ as $\{A\subseteq \omega: A \cap 2\omega \in \mathrm{Fin}\}$, then $\mathcal{I}=\mathrm{Exh}(\varphi_{\bm{\mu}})$, where $\mu_n$ is the Dirac measure on $2n$, for each $n \in \omega$.
\end{proof}

%\section{Density-like ideals which are not generalized density ideals}\label{sec:proof}
\section{Nontall solutions to Question \ref{q:questiondensitylike}}\label{sec:nontall}

%Before we prove Theorem \ref{thm::main}, we need some intermediate results. To this aim,
In this Section, we provide a positive answer to Question \ref{q:questiondensitylike} by showing that there exists a nontall density-like ideal which is not a generalized density ideal. 

To this aim, given an ideal $\mathcal{I}\subseteq \mathcal{P}(\omega)$, define 
\begin{equation}\label{eq:widehatI}
\widehat{\I}:=h[(\emptyset \times \mathrm{Fin}) \cap (\mathcal{I} \times \emptyset)].
\end{equation}
Note that $(\emptyset \times \mathrm{Fin}) \cap (\mathcal{I} \times \emptyset)$ is an ideal on $\omega^2$, hence $\widehat{\I}$ is an ideal on $\omega$. 
The ideal $\widehat{\I}$ has been introduced and studied by Oliver in \cite[Definition 2.1]{MR2078923}, following an idea of Hjorth. 
It is remarkable that Oliver used these ideals $\widehat{\I}$ to show that, in any model of ZFC, there exist an uncountable family of \emph{Borel} ideals $\mathcal{J}$ such that the quotient Boolean algebras $\mathcal{P}(\omega)/\mathcal{J}$ are pairwise nonisomorphic, see \cite[Theorem 3.2]{MR2078923}, addressing also a well-known question due to Farah \cite{MR1988247}.

\begin{lem}\label{lem:nontall}
Let $\mathcal{I}$ be an ideal. Then $\widehat{\I}$ is a nontall ideal.
\end{lem}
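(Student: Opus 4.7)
The plan is to exhibit a single infinite subset of $\omega$ witnessing the failure of tallness for $\widehat{\I}$, and the natural candidate is the image under $h$ of one column of $\omega^2$. First I would unpack the definition of the ideal $(\emptyset \times \mathrm{Fin}) \cap (\mathcal{I} \times \emptyset)$ on $\omega^2$: chasing through the definitions of Fubini products, a set $B \subseteq \omega^2$ lies in this ideal exactly when every vertical section $B_{(m)}$ is finite and $\{m \in \omega : B_{(m)} \neq \emptyset\} \in \mathcal{I}$. Thus a set contributes to $\widehat{\I}$ only if it is, in a strong sense, horizontally spread out.

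Given this reading, let $C := \{0\} \times \omega$, so that $h[C] \subseteq \omega$ is infinite. For any infinite $Y \subseteq C$ we have $Y = \{0\} \times K$ with $K \subseteq \omega$ infinite, and then the single vertical section $Y_{(0)} = K$ is infinite, so $Y \notin \emptyset \times \mathrm{Fin}$ and consequently $h[Y] \notin \widehat{\I}$. Hence no infinite subset of $h[C]$ lies in $\widehat{\I}$, which is precisely the statement that $\widehat{\I}$ is nontall.

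I do not anticipate any real obstacle: the conclusion is essentially immediate from the definitions, since the factor $\emptyset \times \mathrm{Fin}$ in the definition of $\widehat{\I}$ already prevents any infinite subset of a fixed column from belonging to the ideal, independently of $\mathcal{I}$.
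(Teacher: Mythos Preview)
Your proof is correct and follows exactly the same approach as the paper: exhibit $h[\{0\}\times\omega]$ as an infinite set no infinite subset of which lies in $\widehat{\I}$, using that any infinite subset has infinite $0$-th vertical section and hence fails to belong to $\emptyset\times\mathrm{Fin}$. The paper states this in one line, and your version simply spells out the details.
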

\begin{proof}
It is sufficient to see that $h[\{0\}\times \omega]$ is an infinite set which does not contain any infinite subset in $\mathcal{I}$.
\end{proof} 

Here, we show first that if $\I$ is an analytic P-ideal [density-like, respectively], then so is $\widehat{\I}$; notice that the first claim, with an essentially equivalent proof, can be already found in Oliver's work \cite[Lemma 3.6]{MR2078923}, but we repeat it here for the sake of readers' convenience (e.g., we will make explicit use of the submeasure $\lambda$ defined in \eqref{eq:lambda} also later). 
Then, we prove that $\widehat{\I}$ is not a generalized density ideal whenever $\mathcal{I}$ is tall.

%As remarked by Jacek Tryba, the ideal $\widehat{\I}$ defined in \eqref{eq:widehatI} has been already considered in 
%\cite{MR2078923}. 
%\cite[Definition 2.1]{MR2078923}. 
%Indeed, here, the reader can find a different proof of the following result, see \cite[Lemma 3.6]{MR2078923}.

\begin{thm}\label{thm:analyticPdieal}
Let $\mathcal{I}$ be an analytic P-ideal. Then $\widehat{\I}$ is an analytic P-ideal. % as well.
\end{thm}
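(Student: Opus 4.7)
The plan is, by Solecki's theorem, to write $\mathcal{I}=\mathrm{Exh}(\varphi)$ for some bounded lscsm $\varphi$ on $\mathcal{P}(\omega)$, and then to exhibit an explicit bounded lscsm $\lambda$ on $\mathcal{P}(\omega^2)$ such that
$$
\mathrm{Exh}(\lambda)=(\emptyset\times\mathrm{Fin})\cap(\mathcal{I}\times\emptyset).
$$
Transporting $\lambda$ through the bijection $h$ of \eqref{eq:bijectionh} and invoking Solecki's theorem in the reverse direction will then conclude that $\widehat{\mathcal{I}}=h[\mathrm{Exh}(\lambda)]$ is an analytic P-ideal.

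Writing $\pi_0\colon\omega^2\to\omega$ for the first-coordinate projection and fixing positive reals $(\beta_m)$ with $\sum_m\beta_m<\infty$ (say, $\beta_m:=2^{-m}$), my candidate is
$$
\lambda(B):=\varphi(\pi_0(B))+\sum_{m\in\omega}\beta_m\cdot\mathbf{1}_{B_{(m)}\neq\emptyset}, \qquad B\subseteq\omega^2.
$$
The two summands are separately lscsms on $\mathcal{P}(\omega^2)$: the first because $\pi_0$ commutes with unions and every finite subset of $\pi_0(B)$ is realized as $\pi_0(F)$ for some finite $F\subseteq B$; the second by direct inspection. Hence $\lambda$ is a lscsm, and it is bounded since $\lambda(\omega^2)=\varphi(\omega)+\sum_m\beta_m<\infty$.

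The heart of the argument is to compute $\mathrm{Exh}(\lambda)$. For the $\supseteq$ inclusion, given $B$ with every column finite and $\pi_0(B)\in\mathcal{I}$ together with $\varepsilon>0$, I would pick a finite $G\subseteq\omega$ large enough that both $\sum_{m\notin G}\beta_m$ and $\varphi(\pi_0(B)\setminus G)$ fall below $\nicefrac{\varepsilon}{2}$, and set $F:=\bigcup_{m\in G}\{m\}\times B_{(m)}$: this is a finite subset of $\omega^2$ precisely because every column $B_{(m)}$ is finite, and it empties all columns indexed by $G$, so both summands of $\lambda(B\setminus F)$ stay below $\nicefrac{\varepsilon}{2}$. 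For the $\subseteq$ inclusion, the argument splits in two: any infinite column $B_{(m_0)}$ survives removal of every finite $F$, yielding $\|B\|_\lambda\geq\beta_{m_0}>0$; and once all columns of $B$ are finite, the elementary inclusion $\pi_0(B)\setminus\pi_0(F)\subseteq\pi_0(B\setminus F)$ forces $\|\pi_0(B)\|_\varphi\leq\|B\|_\lambda$, so $\|B\|_\lambda=0$ gives $\pi_0(B)\in\mathcal{I}$.

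The main delicate point will be the bookkeeping in the $\supseteq$ direction: the two summands of $\lambda$ respond to entirely different mechanisms (column-finiteness versus $\varphi$-exhaustiveness of $\pi_0(B)$), yet they must be killed simultaneously by a single finite witness $F$. This works only because column-finiteness lets the $|G|$ columns indexed by $G$ be absorbed into one finite chunk. Neither term of $\lambda$ can be dispensed with: without the column-indicator term one could not rule out infinite columns, while without the $\varphi\circ\pi_0$ term one could not detect whether $\pi_0(B)\in\mathcal{I}$.
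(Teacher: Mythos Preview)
Your proof is correct and follows essentially the same route as the paper: both exhibit a bounded lscsm of the form $\tilde\varphi\circ\pi_0$ on $\omega^2$ whose exhaustive ideal is $(\emptyset\times\mathrm{Fin})\cap(\mathcal{I}\times\emptyset)$. The only cosmetic difference is how full support is arranged: the paper first replaces $\varphi$ by a lscsm with $\mathrm{supp}(\varphi)=\omega$ and then uses the single term $\nu(B)=\varphi(\pi_0(B))$, whereas you keep $\varphi$ unchanged and bolt on the summable column-indicator $\sum_m\beta_m\mathbf{1}_{B_{(m)}\neq\emptyset}$ to detect infinite columns---effectively replacing $\varphi$ by $\varphi+\psi$ with $\psi(S)=\sum_{m\in S}\beta_m$, which has full support anyway.
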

\begin{proof}
%First of all, $\widehat{\I}$ is not tall. Indeed $h[\{0\}\times \omega]$ is an infinite set which does not contain any infinite subset in $\mathcal{I}$.
%
%\textcolor{red}{[Check differences, it changed: (2) dov'è usato? Inizio $\mu\mapsto \varphi$.]}
Let $\varphi$ be a lscsm such that $\mathcal{I}=\mathrm{Exh}(\varphi)$. 
%Without loss of generality 
We may assume that $\mathrm{supp}(\varphi)=\omega$ (indeed, it is easy check that $\mathcal{I}=\mathrm{Exh}(\tilde{\varphi})$, where $\tilde{\varphi}$ is the lscsm defined by $\tilde{\varphi}(A):=\varphi(A)+\sum_{a \in A\setminus \mathrm{supp}(\varphi)}1/(a+1)^{2}$ for all $A\subseteq \omega$). 

Let $\nu$ be the submeasure defined by
\begin{equation}\label{eq:nu}
\forall B\subseteq \omega^2,\quad \nu(B):=\varphi\left(\left\{m \in \omega: B_{(m)} \neq \emptyset\right\}\right).
\end{equation}
%for all $B\subseteq \omega^2$. 
To conclude the proof, we claim that $\widehat{\I}=\Exh(\lambda)$, where $\lambda$ is the submeasure defined by 
\begin{equation}\label{eq:lambda}
\forall A\subseteq \omega,\quad \lambda(A):=\nu(h^{-1}[A]). 
\end{equation}
%for all $A\subseteq \omega$. 
(Note that $\lambda$ is a lscsm and that $\mathrm{supp}(\lambda)=\omega$.)

\medskip

$\mathrm{Exh}(\lambda)\subseteq \widehat{\I}$: Fix $A \in \mathrm{Exh}(\lambda)$ and set $B:=h^{-1}[A]$. Then
\begin{equation}\label{eq:lambdanorm}
\textstyle 0=\|A\|_\lambda=\inf_{F \in \mathrm{Fin}}\lambda(A\setminus F)=\inf_{G \in [\omega^2]^{<\omega}}\nu(B\setminus G).
\end{equation}
%where $B:=h^{-1}(A)$.
First, we want to prove that $B \in \emptyset \times \mathrm{Fin}$. Indeed, in the opposite, there would exist $m \in \omega$ such that $B_{(m)}\notin \mathrm{Fin}$. However, we would obtain
$$
\textstyle \nu(B\setminus G) \ge \nu((\{m\}\times B_{(m)})\setminus G)=\varphi(\{m\})>0
$$
for every finite set $G\subseteq \omega^2$, which contradicts \eqref{eq:lambdanorm}. Secondly, we show that $B \in \mathcal{I}\times \emptyset$. Thanks to \eqref{eq:lambdanorm}, for every $\varepsilon>0$, there exists a finite set $G\subseteq \omega^2$ such that $\nu(B\setminus G)<\varepsilon$. Let $F=\{m\in\omega:G_{(m)} \neq \emptyset\}\in\Fin$. Then 
$$
\varphi\left(\left\{m\in\omega:B_{(m)} \neq \emptyset\right\}\setminus F\right)\leq 
\varphi\left(\left\{m\in\omega:(B\setminus G)_{(m)} \neq \emptyset\right\}\right)=\nu(B\setminus G)<\varepsilon.
$$
By the arbitrariness of $\varepsilon$, we have $B \in \mathcal{I}\times \emptyset$. To sum up, we have $B \in (\emptyset \times \mathrm{Fin}) \cap (\mathcal{I}\times \emptyset)$, so that $h^{-1}[\mathrm{Exh}(\lambda)] \subseteq  (\emptyset\times\Fin)\cap(\I\times\emptyset)$.

\medskip

$\widehat{\I}\subseteq \mathrm{Exh}(\lambda)$: Suppose now that $B \in (\emptyset\times\Fin)\cap(\I\times\emptyset)$ and fix $\varepsilon>0$. Since $B \in \mathcal{I}\times \emptyset$, there exists $F\in\Fin$ such that $\varphi\left(\left\{m\in\omega: B_{(m)} \neq \emptyset\right\}\setminus F\right)<\varepsilon$. However, since $B \in \emptyset \times \mathrm{Fin}$, the set $G:=B \cap (F\times\omega)$ is finite. Hence 
$$
\nu(B\setminus G)=\varphi\left(\left\{m\in\omega:(B\setminus G)_{(m)} \neq \emptyset\right\}\right)=\varphi\left(\left\{m\in\omega:B_{(m)} \neq \emptyset\right\}\setminus F\right)<\varepsilon.
$$

%\medskip

Therefore $\widehat{\I}=\mathrm{Exh}(\lambda)$, which concludes the proof.
\end{proof}

%%%%%%%%%%%%%%%%%%%%%%%%%%%%%
\begin{rmk}\label{rmk:nonpathological}
%\textcolor{red}{[Mail Adam, change Rmk]} 
%
Let us suppose that $\varphi$ is a nonpathological lscsm (in the sense of Farah \cite[Section 1.7]{MR1711328}), that is, 
$$%\begin{equation}\label{eq:varphinonpathological}
\textstyle \forall A\subseteq \omega,\quad \varphi(A)=\sup_{\eta \in \mathcal{N}(\varphi)}\eta(A),
$$%\end{equation}
where $\mathcal{N}(\varphi)$ stands for the set of finitely additive measures $\eta$ such that $\eta(V)\le \varphi(V)$ for all $V\subseteq \omega$ (note that $\mathcal{N}(\varphi)\neq \emptyset$ as it contains $\eta=0$); 
%for a strictly related notion used in game theory, see \cite{MR381720}.
strictly related notions have been used in game theory, see \cite{MR381720}, and in the context of measure algebras, see \cite{MR2099603, MR701524, MR2456888}.

Then the lscsm $\lambda$ defined in \eqref{eq:lambda} is nonpathological as well. 
%To this aim, [...]
%
%\bigskip
%
%The submeasure $\lambda$ defined in \eqref{eq:lambda} is nonpathological (in the sense of Farah \cite[Section 1.7]{MR1711328}), that is, 
%\textcolor{red}{[It should be true iff $\varphi$ is nonpathological]} %for each $A\subseteq \omega$ we have 
%\begin{equation}\label{eq:nonpathological}
%\textstyle \forall A\subseteq \omega,\quad \lambda(A)=\sup_{\eta \le \lambda}\eta(A),
%\end{equation}
%where there the supremum is taken over all 
%finitely additive 
%measures $\eta$ such that $\eta(V)\le \lambda(V)$ for all $V\subseteq \omega$. 
To this aim, fix $A\subseteq \omega$ such that $\lambda(A)\neq 0$ (otherwise the claim is trivial), and recall that %\textcolor{red}{[Non ci sono piu' i $\mu_k$]}
$$
\lambda(A)=\nu(h^{-1}[A])=\varphi(M), \text{ where }M:=\{m \in \omega: h^{-1}[A]_{(m)} \neq \emptyset\}.
$$ 
In particular, $M$ is nonempty. For each $m \in M$, pick $a_m \in h^{-1}[A]_{(m)}$.  %and define $B:=\{(m,a_m): m \in M\} \subseteq h^{-1}[A]$. 
%% and $C:=h[B]$. 
%It follows by construction that $\lambda(A)=\nu(B)$. % and
%\begin{equation}\label{eq:ABC}
%\forall m \in \omega, \quad h^{-1}[A]_{(m)} \neq \emptyset 
%\,\,\text{ iff }\,\,
%B_{(m)} \neq \emptyset 
%\,\,\text{ iff }\,\,
%h^{-1}[C]_{(m)} \neq \emptyset.
%\end{equation}
%
Since $\varphi$ is nonpathological, there exists a sequence $(\eta_n) \in \mathcal{N}(\varphi)^\omega$ such that $\varphi(M)=\lim_n \eta_n(M)$. At this point, define
$$
\textstyle \forall n \in \omega, \forall V \subseteq \omega,\quad \psi_n(V):=\eta_n(\{m \in M: (m,a_m) \in h^{-1}[V]\}).
$$
It is easy to see that each $\psi_n$ is a finitely additive measure. 
%and that $\eta_n(M)=. 
Moreover, since each $\eta_n$ is pointwise dominated by $\varphi$, we have
\begin{displaymath}
\begin{split}
\textstyle \forall n \in \omega, \forall V\subseteq \omega,\quad 
\psi_n(V) &\le \varphi(\{m \in M: (m,a_m) \in h^{-1}[V]\})\\
&
%\text{ }\,\,\,\,\,\,\,\,\,\,\,
\le \varphi(\{m \in M: h^{-1}[V]_{(m)}\neq \emptyset\})\le \lambda(V),
\end{split}
\end{displaymath}
which implies that $(\psi_n) \in \mathcal{N}(\lambda)^\omega$. Lastly, we have that
\begin{displaymath}
\begin{split}
\lambda(A)&\textstyle =\varphi(M)=\lim_{n\to \infty} \eta_n(M)\\
&=\lim_{n \to \infty} \eta_n(\{m \in M: h^{-1}[A]_{(m)} \neq \emptyset\})=\lim_{n\to \infty}\psi_n(A). 
\end{split}
\end{displaymath}
This proves that $\lambda(A)=\sup_{\psi \in \mathcal{N}(\lambda)}\psi(A)$, i.e., $\lambda$ is nonpathological. 
%\bigskip

%Define $W:=\{m \in \omega: h^{-1}[A]_{(m)}\neq \emptyset\}$ and $K:=\{k \in \omega: S_k \cap W \neq \emptyset\}$, so that they are both nonempty. For each $m \in W$, pick $a_m \in h^{-1}[A]_{(m)}$. 
%Lastly, for each $k \in K$, let $\eta_k$ be the measure defined by 
%$$
%\eta_k(V):=\mu_k(\{m \in S_k: (m,a_m) \in h^{-1}[V]\})
%%\mu_k(h^{-1}[ V \cap \{a_m: m \in M\}])
%$$
%for all $V\subseteq \omega$. Then, it is clear that $\eta_k(V) \le \lambda_k(V) \le \lambda(V)$ for all $V\subseteq \omega$, so that $\eta_k\le \lambda$ for all $k \in K$. In addition, we have
%$$
%\sup_{k \in K}\eta_k(A)=\sup_{k \in K}\mu_k(\{m \in S_k: h^{-1}[A]_{(m)}\neq \emptyset\})=\sup_{k \in K}\lambda_k(A)=\lambda(A),
%$$
%which proves \eqref{eq:nonpathological}. 
%\textcolor{red}{[Does the converse hold?]}
\end{rmk}

%%%%%%%%%%%%%%%%%%%%%%%%%%%
Now we show that the submeasure $\lambda$ defined in \eqref{eq:lambda} is density-like provided that $\varphi$ is density-like (for an alternative shorter proof in the case where $\mathcal{I}$ is an Erd\H{o}s-Ulam ideal, see Corollary \ref{cor:simplified} below). %of Theorem \ref{thm:lambdadensitylike})
\begin{thm}\label{thm:lambdadensitylike}
Let $\mathcal{I}$ be a density-like ideal. Then $\widehat{\I}$ is a density-like ideal as well.
\end{thm}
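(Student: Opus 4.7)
The plan is to verify that the lscsm $\lambda$ defined in \eqref{eq:lambda}, which satisfies $\widehat{\mathcal{I}}=\mathrm{Exh}(\lambda)$ by Theorem \ref{thm:analyticPdieal}, is itself density-like. Writing $\mathcal{I}=\mathrm{Exh}(\varphi)$ for a density-like lscsm $\varphi$ (and assuming as in the proof of Theorem \ref{thm:analyticPdieal} that $\mathrm{supp}(\varphi)=\omega$), the key idea is to invoke Proposition \ref{equiv:submeasuresdensitylike} in order to replace the defining property \ref{sub:3} for $\varphi$ with the equivalent condition \ref{sub:2}, which drops the pairwise disjointness requirement on the test sequence.

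First I would fix $\varepsilon>0$ and pick $\delta>0$ witnessing condition \ref{sub:2} for $\varphi$ and $\varepsilon$. Then, given any $(G_n)\in \mathcal{F}_{\mathrm{disj}}$ with $\lambda(G_n)<\delta$ for all $n$, I would set $B_n:=h^{-1}[G_n]$ (yielding a pairwise disjoint sequence of finite subsets of $\omega^2$) and pass to the first-coordinate projections $M_n:=\{m\in\omega:(B_n)_{(m)}\neq\emptyset\}$. By the definitions \eqref{eq:nu} and \eqref{eq:lambda}, each $M_n$ is a finite subset of $\omega$ with $\varphi(M_n)=\nu(B_n)=\lambda(G_n)<\delta$.

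The main obstacle is that the projections $(M_n)$ need not be pairwise disjoint: distinct $B_n$'s may share first coordinates, so the defining condition \ref{sub:3} of density-likeness for $\varphi$ cannot be applied to them directly. This is precisely where the equivalent condition \ref{sub:2} enters, since it requires only finiteness of each $M_n$ and smallness of $\varphi(M_n)$. It produces an infinite $I\subseteq\omega$ such that $\varphi(\bigcup_{i\in I}M_i)<\varepsilon$. From the elementary identity
\[
\textstyle \bigl\{m\in\omega:\bigl(\bigcup_{i\in I}B_i\bigr)_{(m)}\neq\emptyset\bigr\}=\bigcup_{i\in I}M_i,
\]
I would then conclude
\[
\textstyle \lambda\bigl(\bigcup_{i\in I}G_i\bigr)=\nu\bigl(\bigcup_{i\in I}B_i\bigr)=\varphi\bigl(\bigcup_{i\in I}M_i\bigr)<\varepsilon,
\]
which verifies condition \ref{sub:3} for $\lambda$ and hence shows that $\widehat{\mathcal{I}}=\mathrm{Exh}(\lambda)$ is a density-like ideal.
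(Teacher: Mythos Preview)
Your proposal is correct and follows essentially the same argument as the paper: both proofs show that the lscsm $\lambda$ of \eqref{eq:lambda} is density-like by projecting the disjoint test sets $h^{-1}[G_n]$ to their first-coordinate supports $M_n$ and then invoking condition \ref{sub:2} of Proposition \ref{equiv:submeasuresdensitylike} for $\varphi$, precisely because these projections need not remain pairwise disjoint. The paper's write-up differs only in notation (using $F_n$ and $E_n$ in place of your $G_n$ and $M_n$).
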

\begin{proof}
%\textcolor{red}{[Use $\varphi$ everywhere.]}
Let $\varphi$ be a density-like lscsm such that $\I=\Exh(\varphi)$ and consider the lscsm $\lambda$ defined as in the proof of Theorem \ref{thm:analyticPdieal}. 
Fix $\varepsilon>0$. By Proposition \ref{equiv:submeasuresdensitylike}, there is $\delta=\delta(\varepsilon)>0$ such that for all $(E_n) \in \mathrm{Fin}^\omega \cap \mathcal{G}_{\varphi,\delta}$ there exists $I \in [\omega]^\omega$ with $\varphi\left(\bigcup_{i \in I}E_i\right)<\varepsilon$. 
We claim that the same $\delta$ witnesses the fact that $\lambda$ is density-like.

Fix $(F_n) \in \mathcal{F}_{\mathrm{disj}} \cap \mathcal{G}_{\lambda,\delta}$. 
%We need to show that there exists $I \in [\omega]^{\omega}$ such that $\lambda\left(\bigcup_{i \in I}F_i\right)<\varepsilon$. 
%To this aim, 
Define $E_n:=\left\{m \in \omega: h^{-1}[F_n]_{(m)}\neq \emptyset\right\} \in \mathrm{Fin}$ for each $n \in \omega$ and note that 
%Note that 
$$
\varphi(E_n)=\varphi\left(\left\{m \in \omega: h^{-1}[F_n]_{(m)}\neq \emptyset\right\}\right)=\lambda(F_n)<\delta.
$$
%for each $n\in\omega$. 
Thus, $(E_n) \in \mathrm{Fin}^\omega \cap \mathcal{G}_{\varphi,\delta}$ and there exists $I \in [\omega]^\omega$ with $\varphi\left(\bigcup_{i \in I}E_i\right)<\varepsilon$. Then 
$$
\textstyle \lambda(F)=\varphi(\{m \in \omega: h^{-1}[F]_{(m)} \neq \emptyset\})=\varphi(\bigcup_{i \in I}E_i)<\varepsilon,
$$
%$$
%\lambda\left(\bigcup_{i \in I}F_i\right)=\mu\left(\left\{m \in \omega: \left(h^{-1}\left[\bigcup_{i \in I}F_i\right]\right)_{(m)}\neq \emptyset\right\}\right)=$$
%$$=\mu\left(\bigcup_{i \in I}\left\{m \in \omega: \left(h^{-1}\left[F_i\right]\right)_{(m)}\neq \emptyset\right\}\right)=\mu\left(\bigcup_{i \in I}E_i\right)<\varepsilon$$
where $F:=\bigcup_{i \in I}F_i$. This concludes the proof.
%\textcolor{red}{[Textstyle molto semplificato? Tutto in una sola riga, canc in basso a sinistra e il resto sopra] This concludes the proof.}
\end{proof}

%See Remark \ref{rem:DL} for an alternative proof of Theorem \ref{thm:lambdadensitylike} (not using Proposition \ref{equiv:submeasuresdensitylike}, which in turn uses \cite[Lemma 3.1]{MR2787694}) in the case when $\I$ is a density ideal.

%%%%%%%%%%%%%%%%%%%%%%%%%%%%%%%%%%%%%

\begin{thm}\label{thm:notdensityideal}
Let $\mathcal{I}$ be a tall ideal. Then $\widehat{\I}$ is not a generalized density ideal.
\end{thm}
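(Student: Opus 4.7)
The plan is to argue by contradiction: assume $\widehat{\I}=\Exh(\varphi_{\bm{\mu}})$ for some sequence $\bm{\mu}=(\mu_n)$ of lscsms with pairwise disjoint finite supports $(S_n)$. By Proposition~\ref{prop:supportgeneralizeddensityideal} one may assume $\bigcup_n S_n=\omega^2$, and from disjointness one checks that $\|A\|_{\varphi_{\bm{\mu}}}=\limsup_n \mu_n(A\cap S_n)$ for every $A\subseteq\omega^2$. Since each column $C_m:=\{m\}\times \omega$ lies outside $\emptyset\times\Fin\supseteq\widehat{\I}$, the number $\alpha_m:=\|C_m\|_{\varphi_{\bm{\mu}}}$ is strictly positive, so the set $N_m:=\{n:\mu_n(C_m\cap S_n)\ge \alpha_m/2\}$ is infinite.

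The central trick is to pick, recursively in $m$, an index $n(m)\in N_m$ such that (i) $(n(m))_m$ is strictly increasing, and (ii) $\mu_{n(m)}(C_m\cap S_{n(m)})\le 2\alpha_m$; condition (ii) is available because $\alpha_m$ is a \emph{limit superior}. Set $F_m:=C_m\cap S_{n(m)}$, so $\alpha_m/2\le \mu_{n(m)}(F_m)\le 2\alpha_m$. I then split by the asymptotics of $(\alpha_m)$. In Case~1, $\limsup_m\alpha_m>0$, so there exist $\delta>0$ and an infinite $M\subseteq\omega$ with $\alpha_m\ge\delta$ on $M$; tallness of $\mathcal{I}$ yields an infinite $M'\subseteq M$ with $M'\in\mathcal{I}$, and the set $B:=\bigcup_{m\in M'} F_m$ lies in $\widehat{\I}$ (finite columns $F_m$, column support $M'\in\mathcal{I}$) yet satisfies $\mu_{n(m)}(B\cap S_{n(m)})\ge \delta/2$ for every $m\in M'$, forcing $\|B\|_{\varphi_{\bm{\mu}}}\ge\delta/2>0$, a contradiction. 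In Case~2, $\alpha_m\to 0$; now set $B:=\bigcup_{m\in\omega}F_m$. By distinctness of $(n(m))$ and disjointness of $(S_n)$, $B\cap S_n=F_m$ when $n=n(m)$ and is empty otherwise, so
\[
\|B\|_{\varphi_{\bm{\mu}}}=\limsup_n \mu_n(B\cap S_n)=\limsup_m \mu_{n(m)}(F_m)\le \limsup_m 2\alpha_m = 0,
\]
which forces $B\in\widehat{\I}$. However, $F_m$ is nonempty for each $m$, so the column support of $B$ is $\omega$; by properness of $\mathcal{I}$ we have $\omega\notin\mathcal{I}$, so $B\notin \mathcal{I}\times\emptyset\supseteq \widehat{\I}$, again a contradiction.

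The main obstacle to anticipate is Case~2: when column norms decay to zero, any construction based on a single column or on a tall $\mathcal{I}$-subset of columns produces a set whose $\limsup_n \mu_n(B\cap S_n)$ vanishes \emph{and} which lies in $\widehat{\I}$, so no contradiction arises from the submeasure side. The cure is to take all columns at once so that the violation occurs on the membership side (the column support becomes $\omega\notin\mathcal{I}$) rather than on the submeasure side; making this collapse work requires both the careful upper bound (ii) and the distinctness in (i), which are what let one reduce $\limsup_n$ to $\limsup_m 2\alpha_m$. Tallness enters only in Case~1, whereas Case~2 rests on the properness of $\mathcal{I}$ and the structural constraint $\widehat{\I}\subseteq\mathcal{I}\times\emptyset$.
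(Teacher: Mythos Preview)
Your argument is correct, but it takes a noticeably different route from the paper's proof. The paper avoids your case split entirely by building, at the outset, a single ``diagonal'' set $X$: one point $x_m$ from each column $C_m$, chosen so that $|\mathrm{supp}(\mu_n)\cap X|\le 1$ for every $n$. Since the column support of $X$ is $\omega\notin\mathcal{I}$, one immediately has $X\notin\widehat{\I}$, which yields a uniform $\varepsilon>0$ and an increasing sequence $(m_t)$ with $\mu_{m_t}(X)\ge\varepsilon$; the sparseness of $X$ then forces $\mu_{m_t}(X)=\mu_{m_t}(\{y_t\})$ for a single point $y_t\in X$. Every infinite subset of $Y:=\{y_t:t\in\omega\}$ therefore has positive $\|\cdot\|_{\varphi_{\bm{\mu}}}$, so is outside $\widehat{\I}$; but $Y$ has finite columns, so tallness of $\mathcal{I}$ on its (infinite) column support produces an infinite $Z\subseteq Y$ in $\widehat{\I}$, giving the contradiction in one stroke.

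Compared to your proof, the paper's version trades your per-column norm analysis (the $\alpha_m$'s with the two-sided estimate and the dichotomy $\limsup_m\alpha_m>0$ vs.\ $\alpha_m\to 0$) for a single diagonalisation that forces a uniform lower bound from the start. What your approach buys is a somewhat more ``quantitative'' picture: you see explicitly that the obstruction lives either in the failure of column norms to decay (Case~1, where tallness does the work) or in the properness of $\mathcal{I}$ (Case~2, where tallness is irrelevant). The paper's approach is shorter and needs only one use of tallness, but it hides this structural dichotomy inside the initial construction of $X$.
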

\begin{proof}
Let us suppose that $\widehat{\I}=\mathrm{Exh}(\varphi_{\bm{\mu}})$, where $\bm{\mu}=(\mu_n)$ is a sequence of lscsms such that $(G_n) \in \mathcal{F}_{\mathrm{disj}}$, where $G_n:=\mathrm{supp}(\mu_n)$ for each $n \in \omega$. 

Fix a strictly increasing sequence $(x_n) \in \omega^\omega$ such that 
$$
\forall n \in \omega, \quad x_n \in h[\{n\} \times \omega] 
\,\,\,\text{ and }\,\,\,
|G_n \cap X| \le 1,
$$
where 
%x_n \in h[\{n\} \times \omega]$ for all $n$, and $|G_k \cap X|\le 1$ for all $k$, where 
$X:=\{x_k: k \in \omega\}$ (it is easy to see that such sequence exists). It follows that $X\notin h[\I\times\emptyset]$, hence $X\notin \widehat{\I}=\mathrm{Exh}(\varphi_{\bm{\mu}})$. 
This implies that 
%$X$ does not belong also to $\mathrm{Exh}(\varphi_{\bm{\mu}})$, i.e., 
there exists $\varepsilon>0$ and a strictly increasing sequence $(m_t)\in \omega^\omega$ such that $\mu_{m_t}(X) \ge \varepsilon$ for all $t \in \omega$. 
However, by construction, each $G_{m_t}$ contains at most one element from $X$; hence, exactly one since $\mu_{m_t}(X)\neq 0$, let us say $\{y_t\}:=G_{m_t} \cap X$ for all $t \in \omega$. 
It follows that $\mu_{m_t}(Z) \not\to 0$, where $Z$ stands for any infinite subset of $Y:=\{y_t: t \in \omega\}$, therefore 
$\mathcal{P}(Y) \cap [\omega]^\omega \cap \mathrm{Exh}(\varphi_{\bm{\mu}})=\emptyset$.  
%hence 
%To conclude, this would imply that 
This implies that 
every infinite subset of $Y$ does not belong to $\widehat{\I}$. Considering that $Y \cap h[\{n\}\times \omega]$ is finite for all $n \in \omega$, this contradicts the hypothesis that $\mathcal{I}$ is tall.  
\end{proof}

As an immediate consequence, we obtain the proof of Theorem \ref{thm::main}
\begin{proof}[Proof of Theorem \ref{thm::main}]
Let $\mathcal{I}_d$ be the ideal of density zero sets, which is a tall generalized density ideal. By Lemma \ref{lem:nontall}, Theorem \ref{thm:lambdadensitylike}, and Theorem \ref{thm:notdensityideal}, we get that $\widehat{\I}_d$ is a (nontall) density-like ideal which is not a generalized density ideal.
\end{proof}

At this point, a natural question would be: 
\begin{question}\label{q:questiondensitylikejhsjshjdf}
How many pairwise nonisomorphic ideals $\widehat{\I}$ are there, with $\mathcal{I}$ tall density-like ideal?
\end{question}

To this aim, given ideals $\mathcal{I},\mathcal{J}$ on $\omega$ we say that $\mathcal{I}$ is isomorphic to $\mathcal{J}$ if there exists a bijection $f: \omega\to \omega$ such that $A\in \mathcal{I}$ if and only if $f^{-1}[A] \in \mathcal{J}$ for all $A\subseteq \omega$. 
In addition, we say that $\mathcal{I}$ \emph{is below} $\mathcal{J}$ \emph{in the Kat\v{e}tov order} (written as $\mathcal{I}\le_{\mathrm{K}}\mathcal{J}$) if there exists a function $\kappa:\omega\to \omega$ such that $A \in \mathcal{I}$ implies $\kappa^{-1}[A] \in \mathcal{J}$ for all $A\subseteq \omega$, cf. e.g. \cite{MR3950052}.

%\textcolor{red}{[Move?]} 
Lastly, we recall that an ideal $\mathcal{I}$ is called a \textbf{simple density ideal} if there exists a function $g: \omega \to [0,\infty)$ such that $g(n)\to \infty$, $n/g(n)\not\to 0$ and
$$
\mathcal{I}=\mathcal{Z}_g:=\left\{A\subseteq \omega: \lim_{n\to \infty}\frac{|A\cap n|}{g(n)}=0\right\},
$$
see \cite{MR3391516, MR3771234, MR3950052}. In particular, it has been proved in \cite[Theorem 3.2]{MR3391516} that $\mathcal{Z}_g$ is a density ideal (hence, in particular, a generalized density ideal). It is also evident that $\mathcal{Z}_g$ is tall.

The next result can be deduced also as an immediate consequence of \cite[Theorem 3.4]{MR2078923}; however, the latter one has a different (and seemingly more complicated) proof, hence we present our argument for the sake of completeness.
\begin{thm}\label{thm:countingnonisomorphic}
There are $2^\omega$ tall density-like ideals $\mathcal{I}$ such that the ideals $\widehat{\I}$ are pairwise nonisomorphic. 
\end{thm}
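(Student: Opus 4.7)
The plan is to construct $2^{\omega}$ tall density-like ideals whose $\widehat{\cdot}$-transforms are pairwise non-isomorphic by taking simple density ideals $\{\mathcal{Z}_{g_\alpha}:\alpha<2^\omega\}$ that are pairwise Kat\v{e}tov-incomparable. Such a family exists by known results on simple density ideals (see, e.g., \cite{MR3950052}). Each $\mathcal{Z}_{g_\alpha}$ is tall, and being a density ideal is a generalized density ideal and hence density-like. The heart of the argument is then the following claim: \emph{if $\mathcal{I},\mathcal{J}$ are tall ideals with $\widehat{\I}\cong \widehat{\mathcal{J}}$, then $\mathcal{I}\le_{\mathrm{K}}\mathcal{J}$ and $\mathcal{J}\le_{\mathrm{K}}\mathcal{I}$}. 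Pairwise Kat\v{e}tov-incomparability of $\{\mathcal{Z}_{g_\alpha}\}$ together with this claim immediately yields pairwise non-isomorphism of $\{\widehat{\mathcal{Z}_{g_\alpha}}\}$.

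To prove the claim, the key step is a combinatorial characterization of the \emph{co-tall} infinite subsets of $\omega$ for $\widehat{\mathcal{K}}$ (those infinite $X\subseteq\omega$ such that every infinite $Y\subseteq X$ is $\widehat{\mathcal{K}}$-positive), where $\mathcal{K}$ is tall. Writing $C_n^{\mathcal{K}}:=h[\{n\}\times\omega]$ for the columns, I claim $X$ is co-tall in $\widehat{\mathcal{K}}$ if and only if $X$ meets only finitely many $C_n^{\mathcal{K}}$. For the nontrivial implication, if $X$ meets infinitely many columns, I pick a transversal $T=\{t_j:j\in\omega\}\subseteq X$ with $t_j\in C_{m_j}^{\mathcal{K}}$ and the $m_j$ distinct; by tallness of $\mathcal{K}$, the infinite set $\{m_j:j\in\omega\}$ contains an infinite $W\in\mathcal{K}$, and then $\{t_j:m_j\in W\}$ is an infinite subset of $X$ lying in $\widehat{\mathcal{K}}$ (singleton vertical sections, width $W\in\mathcal{K}$), so $X$ is not co-tall. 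Conversely, if $X\subseteq \bigcup_{i\le k}C^{\mathcal{K}}_{n_i}$, any infinite $Y\subseteq X$ must intersect some $C^{\mathcal{K}}_{n_i}$ in an infinite set, hence $Y\notin\widehat{\mathcal{K}}$.

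Given an isomorphism $f:\omega\to\omega$ of $\widehat{\I}$ onto $\widehat{\mathcal{J}}$, each $f^{-1}[C_m^{\mathcal{J}}]$ is co-tall in $\widehat{\I}$ and therefore lies modulo finite in finitely many $\mathcal{I}$-columns. Setting $\pi:=\mathrm{proj}_1\circ h^{-1}$, I define $\alpha:\omega\to\omega$ by $\alpha(n):=\pi(f(h(n,0)))$. Since $\{h(n,0):n\in\omega\}$ is a transversal picking one point from each $\mathcal{I}$-column, the fibre $\alpha^{-1}(\{m\})=\{n:h(n,0)\in f^{-1}[C_m^{\mathcal{J}}]\}$ is finite for every $m$. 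For each $A\in\mathcal{J}$, the set $B:=\{h(n,0):n\in\alpha^{-1}[A]\}$ satisfies $f[B]\subseteq \bigcup_{m\in A}C^{\mathcal{J}}_m$ with finite vertical sections of size $|\alpha^{-1}(\{m\})|$ and width contained in $A\in\mathcal{J}$, so $f[B]\in\widehat{\mathcal{J}}$ and hence $B\in\widehat{\I}$. Since $B$ is a transversal of width $\alpha^{-1}[A]$, this forces $\alpha^{-1}[A]\in\mathcal{I}$, which means $\alpha$ witnesses $\mathcal{J}\le_{\mathrm{K}}\mathcal{I}$; the same argument applied to $f^{-1}$ yields $\mathcal{I}\le_{\mathrm{K}}\mathcal{J}$.

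The main obstacle I anticipate is the characterization of co-tall sets---specifically, confirming that the tallness hypothesis is used in exactly the right place in the transversal argument---and correctly importing the existence of $2^{\omega}$ pairwise Kat\v{e}tov-incomparable simple density ideals from the literature. Once these two ingredients are in place, the finite-to-oneness of $\alpha$ and the extraction of the Kat\v{e}tov reductions are essentially bookkeeping.
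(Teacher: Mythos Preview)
Your proof is correct and follows essentially the same approach as the paper's: both start from the family of $2^\omega$ pairwise Kat\v{e}tov-incomparable simple density ideals from \cite{MR3950052}, define a map $\omega\to\omega$ by reading off the column index of $f(h(n,0))$, use tallness to show this map is finite-to-one, and extract a Kat\v{e}tov reduction from the isomorphism. Your packaging via the characterization of co-tall sets in $\widehat{\mathcal{K}}$ is a clean conceptual wrapper around what the paper does by a direct contradiction argument, but the underlying combinatorics (your $\alpha$ is the paper's $g$, your $B$ plays the role of the paper's $f^{-1}[Y]$) is identical.
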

\begin{proof}
Thanks to \cite[Theorem 3]{MR3950052}, there exists a family of simple density ideals $\{\mathcal{I}_\alpha: \alpha<2^\omega\}$ such that $\mathcal{I}_\alpha\not\leq_{\mathrm{K}}\mathcal{I}_\beta$ for all distinct $\alpha,\beta<2^\omega$.

Hence, given distinct $\alpha,\beta<2^{\omega}$, we claim that $\widehat{\I_\alpha}$ is not isomorphic to $\widehat{\I_\beta}$, i.e., there is no bijection $f: \omega^2\to \omega^2$ such that $f[A] \in (\emptyset\times\Fin)\cap(\I_\alpha\times\emptyset)$ if and only if $A \in (\emptyset\times\Fin)\cap(\I_\beta\times\emptyset)$ for all $A\subseteq \omega^2$. 

%Suppose that $f: \omega^2 \to \omega^2$ is a bijection and define $A:=f[\omega\times\{0\}]$. Let us show that $A\in\emptyset\times\mathrm{Fin}$. Indeed, in the opposite, there would exist $k\in\omega$ and an infinite $B\subseteq\omega$ such that $f[B\times\{0\}]\subseteq\{k\}\times\omega$ and, since $\mathcal{I}_\beta$ is tall, there is an infinite $C\subseteq B$ such that $C\in\mathcal{I}_\beta$. Thus, $C\times\{0\}\in\widehat{\mathrm{DL}}(\mathcal{I}_\beta)$, but $f[C\times\{0\}]\notin \widehat{\mathrm{DL}}(\mathcal{I}_\alpha)$ as $f[C\times\{0\}]\cap(\{n\}\times\omega)$ is infinite. This implies that the function $g:\omega\to \omega$ defined by $f(n,0) \in \{g(n)\} \times \omega$ for all $n \in \omega$ is finite-to-one.
Suppose that $f: \omega^2 \to \omega^2$ is a bijection and suppose that there exist an infinite set $A\subseteq \omega$ and $k\in\omega$ such that $f[A\times\{0\}]\subseteq\{k\}\times\omega$. Since $\mathcal{I}_\beta$ is tall, there is an infinite $B\subseteq A$ such that $B\in\mathcal{I}_\beta$. Thus, $B\times\{0\}\in(\emptyset\times\Fin)\cap(\I_\beta\times\emptyset)$, but $f[B\times\{0\}]\notin (\emptyset\times\Fin)\cap(\I_\alpha\times\emptyset)$ as $f[B\times\{0\}]\cap(\{k\}\times\omega)$ is infinite. This implies that the function $g:\omega\to \omega$ defined by $f(n,0) \in \{g(n)\} \times \omega$ for all $n \in \omega$ is finite-to-one.

Since $\mathcal{I}_\alpha\not\leq_{\mathrm{K}}\mathcal{I}_\beta$, there exists a (necessarily infinite) set $X\in\mathcal{I}_\alpha$ such that $g^{-1}[X]\notin\mathcal{I}_\beta$. Define $Y:=f[\omega \times \{0\}] \cap(X\times\omega)$. Note that, since $g$ is finite-to-one, then $Y\subseteq f[\omega \times \{0\}] \in \emptyset \times \mathrm{Fin}$. Hence $Y \in (\emptyset \times \mathrm{Fin}) \cap (\mathcal{I}_\alpha \times \emptyset)$.

To conclude the proof, let us suppose for the sake of contradiction that $f^{-1}[Y] \in (\emptyset\times\Fin)\cap(\I_\beta\times\emptyset)$. Hence, in particular, $f^{-1}[Y] \in \mathcal{I}_\beta \times \emptyset$, that is, 
$$
Z:=\{n \in\omega: f^{-1}[Y]_{(n)}\neq \emptyset\} \in \mathcal{I}_\beta.
$$
On the other hand, we have 
\begin{displaymath}
\begin{split}
Z&=\{n \in \omega: \exists k \in \omega, (n,k) \in f^{-1}[Y]\}=\{n \in \omega: \exists k \in \omega, f(n,k) \in Y\}\\
&=\{n \in \omega: f(n,0) \in Y\}\supseteq \{n \in \omega: g(n) \in X\}=g^{-1}[X] \notin \mathcal{I}_\beta,
\end{split}
\end{displaymath}
where we used that, if $g(n) \in X$, then $f(n,0) \in \{g(n)\}\times \omega$ and $f(n,0) \in f[\omega \times \{0\}]$, i.e., $f(n,0) \in Y$. This completes the proof.
\end{proof}

Thus, we can answer Question \ref{q:questiondensitylikejhsjshjdf}:
\begin{thm}\label{cor:2omega}
There are $2^\omega$ nonpathological and pairwise nonisomorphic nontall density-like ideals which are not generalized density ideals.
\end{thm}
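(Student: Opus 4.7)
The plan is to assemble the ingredients already established earlier in Section \ref{sec:nontall} and apply them to the family of simple density ideals produced by Theorem \ref{thm:countingnonisomorphic}. Concretely, I would fix the family $\{\mathcal{I}_\alpha : \alpha < 2^\omega\}$ of simple density ideals appearing in the proof of Theorem \ref{thm:countingnonisomorphic}, and show that the associated family $\{\widehat{\mathcal{I}_\alpha} : \alpha < 2^\omega\}$ has all the properties required by the statement.

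First I would record that each $\mathcal{I}_\alpha = \mathcal{Z}_{g_\alpha}$ is a \emph{tall} ideal (this is noted right before Theorem \ref{thm:countingnonisomorphic}), that it is a density ideal by \cite[Theorem 3.2]{MR3391516} and hence, in particular, a density-like ideal, and finally that it is nonpathological, since the submeasure $\varphi_{g_\alpha}(A) = \sup_n |A\cap n|/g_\alpha(n)$ presenting it is a pointwise supremum of (rescaled counting) measures, each of which lies in $\mathcal{N}(\varphi_{g_\alpha})$.

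Next, I would apply the $\widehat{(\cdot)}$ construction and invoke the four results already proved in this section to conclude, for every $\alpha<2^\omega$, that $\widehat{\mathcal{I}_\alpha}$ is: nontall by Lemma \ref{lem:nontall}; density-like by Theorem \ref{thm:lambdadensitylike}; not a generalized density ideal by Theorem \ref{thm:notdensityideal} (this is the step that uses the tallness of $\mathcal{I}_\alpha$); and nonpathological by Remark \ref{rmk:nonpathological} applied to the nonpathological submeasure $\varphi_{g_\alpha}$ and the associated submeasure $\lambda$ of \eqref{eq:lambda}. Pairwise nonisomorphism of the family $\{\widehat{\mathcal{I}_\alpha} : \alpha < 2^\omega\}$ is precisely the content of Theorem \ref{thm:countingnonisomorphic}.

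Since every required property has already been handled in a previous lemma, theorem or remark, no genuine obstacle remains; the only routine check is that simple density ideals are nonpathological, which I would dispatch in a single sentence as above. The proof is therefore essentially a bookkeeping argument, and the paragraph-long statement just cites Lemma \ref{lem:nontall}, Theorems \ref{thm:lambdadensitylike}, \ref{thm:notdensityideal}, \ref{thm:countingnonisomorphic} and Remark \ref{rmk:nonpathological}.
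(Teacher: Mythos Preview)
Your proposal is correct and follows essentially the same route as the paper: it applies the $\widehat{(\cdot)}$ construction to the family of simple density ideals from Theorem \ref{thm:countingnonisomorphic} and then cites Lemma \ref{lem:nontall}, Theorem \ref{thm:lambdadensitylike}, Theorem \ref{thm:notdensityideal}, Theorem \ref{thm:countingnonisomorphic} and Remark \ref{rmk:nonpathological} to verify each required property. The only cosmetic difference is that the paper obtains nonpathologicality of $\mathcal{I}_\alpha$ by noting it is a density ideal, whereas you write out the presenting submeasure explicitly; both arguments are equivalent.
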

\begin{proof}
Let $\mathcal{I}$ be a simple density ideal. Then $\mathcal{I}$ is a density ideal and, in particular, it is nonpathological. Thanks to Remark \ref{rmk:nonpathological}, $\widehat{\mathcal{I}}$ is nonpathological as well. The claim follows by Lemma \ref{lem:nontall}, Theorem \ref{thm:lambdadensitylike}, Theorem \ref{thm:notdensityideal}, and Theorem \ref{thm:countingnonisomorphic}. 
\end{proof}

%\section{Suprema of density-like lscsms}
\section{Tall solutions to Question \ref{q:questiondensitylike}}\label{sec:tall}

%\label{sec:nontall}
In the previous Section we have shown that there exists a nontall density-like ideal which is not a generalized density ideal, providing a positive answer to Question \ref{q:questiondensitylike}. Hence, we may ask:
\begin{question}\label{sec:tallquestionoriiginal}
Does there exist a \emph{tall} density-like ideal which is not a generalized density ideal?
\end{question}
In this Section, we answer positively also Question \ref{sec:tallquestionoriiginal}. %, providing another positive example to Question \ref{q:questiondensitylike}.

%Firstly, we need to develop some new methods.

%%Before we prove Theorem \ref{thm::main}, we need some intermediate results. To this aim,
%In this Section, we provide a positive answer to Question \ref{q:questiondensitylike} by showing that there exists a nontall density-like ideal which is not a generalized density ideal. 

\begin{defi}
\label{equi-density-like}
A sequence $\bm{\mu}=(\mu_n)$ of lscsms is \textbf{equi-density-like} if for all $\varepsilon>0$ there exists $\delta >0$ such that for all $n\in\omega$ and $(F_k) \in \mathcal{F}_{\mathrm{disj}} \cap \mathcal{G}_{\mu_n,\delta}$ there exists an infinite set $I\subseteq \omega$ such that $\mu_n(\bigcup_{i \in I}F_i) < \varepsilon$. 
\end{defi}
In words, each lscsm $\mu_n$ is density-like and the choice of $\delta=\delta(\varepsilon)$ is uniform within all $\mu_n$s.

\begin{thm}\label{sup}
Let $\bm{\mu}=(\mu_n)$ be a sequence of lscsms with pairwise disjoint supports. Then $\varphi:=\sup_n\mu_n$ is density-like if and only if $\bm{\mu}$ is equi-density-like.
\end{thm}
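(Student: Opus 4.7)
The plan is to separate the two implications and handle them in sequence. For the forward direction, I would proceed routinely: given $\varepsilon>0$ let $\delta$ witness density-likeness of $\varphi$. Then for any $n$ and any $(F_k)\in\mathcal{F}_{\mathrm{disj}}\cap\mathcal{G}_{\mu_n,\delta}$, set $F_k':=F_k\cap S_n$. By the pairwise disjointness of the supports, $\varphi(F_k')=\mu_n(F_k')\le\mu_n(F_k)<\delta$ (only $\mu_n$ sees mass on $S_n$). After dealing with the trivial case in which $F_k'$ is empty for infinitely many $k$, I would apply density-likeness of $\varphi$ to the pairwise disjoint sequence $(F_k')$ to produce an infinite $I$ with $\varphi(\bigcup_{i\in I}F_i')<\varepsilon$, yielding $\mu_n(\bigcup_{i\in I}F_i)=\mu_n(\bigcup_{i\in I}F_i')\le\varphi(\bigcup_{i\in I}F_i')<\varepsilon$.

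For the converse I would fix $\varepsilon>0$ and choose $\delta>0$ with $\delta\le\varepsilon/3$ witnessing equi-density-likeness of $\bm{\mu}$ with target $\varepsilon/3$. Given any $(F_k)\in\mathcal{F}_{\mathrm{disj}}\cap\mathcal{G}_{\varphi,\delta}$, the crucial structural observation is that $N_k:=\{n\in\omega:F_k\cap S_n\neq\emptyset\}$ is a finite set for each $k$, because $F_k$ is finite and the $S_n$ are pairwise disjoint. The case in which $N_k=\emptyset$ for infinitely many $k$ is immediate (those $F_k$ are $\varphi$-null), so I would assume $N_k\neq\emptyset$ throughout.

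My main construction is a recursion producing simultaneously indices $k_0<k_1<\cdots$, a decreasing chain $\omega=I_0\supseteq I_1\supseteq\cdots$ of infinite subsets of $\omega$, and an increasing chain $\emptyset=T_0\subseteq T_1\subseteq\cdots$ of finite subsets of $\omega$. At step $t$ I would pick any $k_t\in I_t$ larger than $k_{t-1}$, define $T_{t+1}:=T_t\cup N_{k_t}$, and obtain $I_{t+1}\subseteq I_t$ by successively applying the equi-density-like property to each $\mu_n$ with $n\in N_{k_t}\setminus T_t$---of which there are only finitely many, so $I_{t+1}$ stays infinite. By induction, combining monotonicity for old $n\in T_t$ with the equi-density-like conclusion for the newly added ones, the invariant $\mu_n(\bigcup_{i\in I_{t+1}}F_i)<\varepsilon/3$ will hold for every $n\in T_{t+1}$.

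The hardest part, and the usual obstacle for a naive diagonal fusion, is to control the contribution of the initial segment $F_{k_0},\ldots,F_{k_{t-1}}$ to $\mu_n(\bigcup_s F_{k_s})$ for each $n$. The payoff of the construction is the automatic invariant $N_{k_s}\subseteq T_{s+1}$, which makes this contribution vanish: if $n$ enters $T_\infty:=\bigcup_t T_t$ for the first time at step $t$, then for every $s<t$ one has $n\notin T_{s+1}\supseteq N_{k_s}$, so $F_{k_s}\cap S_n=\emptyset$ and hence $\mu_n(F_{k_s})=0$. Combined with the inclusion $\{k_s:s>t\}\subseteq I_{t+1}$, this will yield
\[
\mu_n\Bigl(\bigcup_s F_{k_s}\Bigr)\le 0+\mu_n(F_{k_t})+\mu_n\Bigl(\bigcup_{i\in I_{t+1}}F_i\Bigr)<\delta+\varepsilon/3\le 2\varepsilon/3,
\]
while for $n\notin T_\infty$ the same inclusion $N_{k_s}\subseteq T_\infty$ forces $\mu_n(\bigcup_s F_{k_s})=0$. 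Taking the supremum over $n$ will give $\varphi(\bigcup_s F_{k_s})\le 2\varepsilon/3<\varepsilon$, proving that $\varphi$ is density-like.
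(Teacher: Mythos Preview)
Your proposal is correct and follows essentially the same approach as the paper. The forward direction is done directly rather than by contrapositive, and in the converse you track the tail bound $\mu_n(\bigcup_{i\in I_{t+1}}F_i)<\varepsilon/3$ separately while the paper tracks the combined quantity $\mu_t(F^{(j)}\cup\bigcup_{i\in X_{j+1}}F_i)<\varepsilon/2$, but both arguments are the same recursive fusion resting on the identical key observation that $\mu_n(F_{k_s})=0$ for $s$ prior to the step at which $n$ first appears.
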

\begin{proof}
Define $S_n:=\mathrm{supp}(\mu_n)$ for each $n \in \omega$.%, so that $(S_n) \in \mathcal{F}_{\mathrm{disj}}$. 
%\textcolor{red}{[Change arrows; ideas from old Thm 3.3, used here, in Lemma 4.8, and Prop.4.10]}

\medskip

\textsc{Only If part.} Suppose that $\bm{\mu}$ is not equi-density-like. Then there exists $\varepsilon>0$ such that for all $\delta>0$ there are $n \in \omega$ and $(F_k) \in \mathcal{F}_{\mathrm{disj}} \cap \mathcal{G}_{\mu_n,\delta}$ for which $\mu_n(\bigcup_{i \in I}F_i) \ge \varepsilon$ whenever $I\in [\omega]^{\omega}$. 
We claim that this $\varepsilon>0$ witnesses that $\varphi$ is not density-like. 
To this aim, fix any $\delta>0$ and let $n$ and $(F_k)$ be as before. Define $E_k:=F_k \cap S_n$ for all $k \in \omega$. Then $\varphi(E_k)=\mu_n(E_k)<\delta$ and $(E_k) \in \mathcal{F}_{\mathrm{disj}} \cap \mathcal{G}_{\varphi, \delta}$. At the same time, we have
$$
\textstyle \forall I \in [\omega]^{\omega}, \quad \varphi\left(\bigcup_{i \in I}E_i\right)=\mu_n\left(\bigcup_{i \in I}E_i\right)=\mu_n\left(\bigcup_{i \in I}F_i\right)\ge \varepsilon.
$$
Therefore $\varphi$ is not density-like.

\medskip

\textsc{If part.} Conversely, suppose that $\bm{\mu}$ is equi-density-like, and fix $\varepsilon>0$. Then there exists a sufficiently small $\delta \in (0,\nicefrac{\varepsilon}{4})$ such that 
\begin{equation}\label{eq:hypotesisequidensitylike}
\textstyle \forall n \in \omega, \forall (F_k) \in \mathcal{F}_{\mathrm{disj}} \cap \mathcal{G}_{\mu_n,\delta}, \exists I \in [\omega]^\omega, \mu_n\left(\bigcup_{i \in I}F_i\right)<\frac{\varepsilon}{4}.
\end{equation}
Fix $(F_k) \in \mathcal{F}_{\mathrm{disj}} \cap \mathcal{G}_{\varphi,\delta}$ and define $(T_k) \in \mathrm{Fin}^\omega$ by $T_k:=\{n \in \omega: F_k \cap S_n \neq \emptyset\}$ for all $k \in \omega$. At this point, we claim that there exist a sequence of infinite sets $(X_j) \in (\mathrm{Fin}^+)^\omega$ and an increasing sequence $(i_j) \in \omega^\omega$ such that, for all $j \in \omega$:
\begin{enumerate}[label=\textup{(}\roman*\textup{)}]%\begin{itemize}
\item \label{item1} 
%\item[(i)] 
$i_j=\min X_j$,% for all $j \in \omega$;
\item \label{item2} 
%\item[(ii)] 
$X_{j+1}\subseteq X_j\setminus\{i_j\}$, and % for all $j \in \omega$; and
%\item[(iii)] $\mu_t(F_{i_0}\cup\cdots\cup F_{i_j}\cup \bigcup_{i\in X_{j+1}} F_i)<\nicefrac{\varepsilon}{2}$ for each $t\in T_{i_0}\cup\cdots\cup T_{i_j}$.
\item \label{item3} 
%\item[(3)] 
$\mu_t\left(F^{(j)} \, \cup \, \bigcup_{i\in X_{j+1}} F_i\right)<\frac{\varepsilon}{2}$ for each $t\in T^{(j)}$, where $F^{(j)}:=\bigcup_{k\le j} F_{i_k}$ and $T^{(j)}:=\bigcup_{k\le j} T_{i_k}$.
%\end{itemize} 
\end{enumerate}

%$(\Leftarrow)$: We will show that $\mu=\sup\mu_n$ is density-like. Let $\varepsilon>0$. There is $\delta>0$ such that for all $n\in\omega$, if $(F_i) \in \mathcal{F}_{\mathrm{disj}} \cap \mathcal{G}_{\mu_n,\delta}$, then $\mu_n(\bigcup_{i \in I}F_i) < \frac{\varepsilon}{4}$ for some $I\in [\omega]^\omega$. Without loss of generality we can assume that $\delta<\frac{\varepsilon}{4}$.
%
%Fix $(F_i) \in \mathcal{F}_{\mathrm{disj}} \cap \mathcal{G}_{\mu,\delta}$. For each $i$ define $T_i=\{n\in\omega:F_i\cap\textrm{supp}(\mu_n)\neq\emptyset\}$ and observe that $T_i$'s are finite.

%We will inductively pick a sequence $(X_j)$ of infinite subsets of $\omega$ and a sequence $(i_j)$ of integers such that:
%\begin{itemize}
%\item[(i)] $i_j=\min X_j$;
%\item[(ii)] $X_{j+1}\subseteq X_j\setminus\{i_j\}$;
%\item[(iii)] $\mu_t(F_{i_0}\cup\ldots\cup F_{i_j}\cup \bigcup_{i\in X_{j+1}} F_i)<\frac{\varepsilon}{2}$ for each $t\in T_{i_0}\cup\ldots\cup T_{i_j}$;
%\end{itemize} 
%for all $j$. \textcolor{red}{[Check]}

We define these sequences recursively. 
Start with $X_0=\omega$ and $i_0=0$. 
%nota che la condizione (iii) richiede che X_1 sia già definito.
Suppose now that $X_k$ and $i_k$ have been defined for all $k\le j \in \omega$ and satisfy \ref{item1}-\ref{item3}. 
%Define $T^{(j)}:=\bigcup_{k\le j}T_{i_{j}}$ for all $j \in \omega$. 
If $T:=T_{i_j}\setminus T^{(j-1)}$ is empty, set $X_{j+1}:=X_j\setminus\{i_j\}$ and $i_{j+1}:=\min X_{j+1}$; 
if $T\neq\emptyset$, since $\mu_n(F_k)\leq \varphi(F_k)<\delta$ for all $n,k\in\omega$, we can find $X_{j+1}\subseteq X_j\setminus\{i_j\}$ such that $\mu_t\left(\bigcup_{i \in X_{j+1}}F_i\right)<\frac{\varepsilon}{4}$ for all $t \in T$; 
finally, set $i_{j+1}:=\min X_{j+1}$. 
If $j\neq 0$ and $t \in T^{(j-1)}$, it follows by the induction hypothesis that
$$
\textstyle \mu_t\left(F^{(j)} \, \cup \, \bigcup_{i\in X_{j+1}} F_i\right)\le \mu_t\left(F^{(j-1)} \, \cup \, \bigcup_{i\in X_{j}} F_i\right) < \frac{\varepsilon}{2}; 
$$
On the other hand, if $j=0$ or $t \in T$, then
$$
\textstyle \mu_t\left(F^{(j)} \, \cup \, \bigcup_{i\in X_{j+1}} F_i\right)\le \mu_t(F_{i_j})+\mu_t\left(\bigcup_{i\in X_{j+1}} F_i\right) < \delta+\frac{\varepsilon}{4}< \frac{\varepsilon}{2},
$$
which proves the condition \ref{item3} and completes the induction.

To complete the proof, note that if $n \notin \bigcup_k T^{(k)}$ then $\mu_n\left(\bigcup_k F^{(k)}\right)=0$ for all $n \in \omega$. Moreover, if $n \in \bigcup_k T^{(k)}$ then 
$$
\textstyle \forall j \in \omega, \quad \mu_n\left(\bigcup_k F^{(k)}\right)\le \mu_n\left(F^{(j)} \, \cup \, \bigcup_{i\in X_{j+1}} F_i\right)<\frac{\varepsilon}{2}.
$$
Thus $\varphi\left(\bigcup_k F^{(k)}\right)\le \frac{\varepsilon}{2}<\varepsilon$, which shows that $\varphi$ is density-like.
\end{proof}

It is worth noting that the above proof works also if $\bm{\mu}$ is a sequence of lscsms such that $\{n\in\omega:k\in\textrm{supp}(\mu_n)\}$ is finite for all $k\in\omega$, in the same spirit of \cite[Proposition 5.4]{MR3436368}, cf. Proposition \ref{prop:supportgeneralizeddensityideal}.
%\begin{rmk}
%\textcolor{red}{[Simile Barnabas?]} Note that the above proof works also if $(\mu_n)$ is a sequence of lscsms such that $\{n\in\omega:k\in\textrm{supp}(\mu_n)\}$ is finite for all $k\in\omega$.
%\end{rmk}

The aim of the next example is twofold: 
first of all, it shows that there exist sequences of lscsms that are not equi-density-like (hence, their pointwise supremum is not density-like); 
secondly, it proves that the ideal $\mathrm{Exh}(\sup_n \mu_n)$ depends on the sequence of lscsms $(\mu_n)$, not on the sequence of ideals $(\mathrm{Exh}(\mu_n))$, that is, there are two sequences of lscsms $(\mu_n)$ and $(\nu_n)$ such that $\mathrm{Exh}(\mu_n)=\mathrm{Exh}(\nu_n)$ for each $n\in \omega$ and, on the other hand, $\mathrm{Exh}(\sup_n \mu_n)\neq \mathrm{Exh}(\sup_n \nu_n)$.

\begin{example}
Let $(I_{n,m})_{n,m\in\omega}$ be a sequence of pairwise disjoint finite subsets of $\omega$ such that $|I_{n,m}|=2^m$ for each $n,m\in\omega$. 
Moreover, let $(\eta_{n,m})_{n,m\in\omega}$ be the sequence of probability measures on $\omega$ defined by
$$
%\textstyle 
\forall n,m \in \omega, \forall A\subseteq \omega,\quad \eta_{n,m}(A)=\frac{|A\cap I_{n,m}|}{2^m}.
$$ 
Then, define the sequences of lscsms $\bm{\mu}=(\mu_n)$ and $\bm{\nu}=(\nu_n)$ by
$$
%\textstyle 
\forall n \in \omega,\quad 
\mu_n=\sup_{m \in \omega}\eta_{n,m}
\,\,\,\text{ and }\,\,\,
\nu_n=\sup_{M \in [\omega]^{n+1}}\sum_{m \in M} \eta_{n,m},
$$
where the suprema are meant in the pointwise order. 

On the one hand, it is easy to see that $\|A\|_{\mu_n}=0$ if and only if $\|A\|_{\nu_n}=0$, so that $\mathrm{Exh}(\mu_n)$ and $\mathrm{Exh}(\nu_n)$ are density ideals and they coincide for each $n \in \omega$. 

On the other hand, $\mathrm{Exh}(\sup_n \mu_n)=\mathrm{Exh}(\sup_{n,m}\eta_{n,m})$ is a density ideal (hence, in particular, it is a density-like ideal), and it is \emph{not} equal to $\mathrm{Exh}(\sup_n \nu_n)$. Indeed, we will prove that $\mathrm{Exh}(\sup_n \nu_n)$ is not density-like. Thanks to Theorem \ref{sup}, this is equivalent to show that $\bm{\nu}$ is not equi-density-like. 

To this aim, put $\varepsilon=1$ and fix any $\delta>0$. There is $k\in\omega$ such that $\frac{1}{2^k}<\delta$. We will find a sequence $(F_n) \in \mathcal{F}_{\mathrm{disj}} \cap \mathcal{G}_{\nu_{2^k},\delta}$ such that $\nu_{2^k}(\bigcup_{i \in I}F_i) \geq \varepsilon$ for each infinite $I\subseteq\omega$. 
For each $n \in \omega$, fix a subset $F_n\subseteq I_{2^k,k+n}$ such that $|F_n|=2^n$. Note that 
$\nu_{2^k}(F_n)=\eta_{2^k,n+k}(F_n)=1/2^{k}<\delta$ 
for all $n \in \omega$. Therefore $(F_n) \in \mathcal{F}_{\mathrm{disj}} \cap \mathcal{G}_{\nu_{2^k},\delta}$. Lastly, fix an infinite set $I \subseteq \omega$ and a subset $M\subseteq I$ such that $|M|=2^k+1$. Then
%$$
%%\textstyle 
%\nu_{2^k}\left(\bigcup_{i \in I}F_i\right) \geq \nu_{2^k}\left(\bigcup_{i \in M}F_{i}\right) \ge \sum_{m \in M}\eta_{2^k,k+m}\left(\bigcup_{i \in M}F_{i}\right) 
%=\sum_{m \in M}\eta_{2^k,k+m}(F_m) >1,
%$$
$$
\nu_{2^k}(\bigcup_{i \in I}F_i) \geq 
\nu_{2^k}(\bigcup_{i \in M}F_{i}) \ge 
\sum_{m \in M}\eta_{2^k,k+m}(\bigcup_{i \in M}F_{i}) 
=\sum_{m \in M}\eta_{2^k,k+m}(F_m) >1,
$$
which proves that $\bm{\nu}$ is not equi-density-like.
\end{example}

\begin{defi}
%\textcolor{red}{[Replace $k_\mu^{-1}$ with $c>0$, quindi sotto $c\varepsilon$ e nel rmk $c=1/2$]} We say that an
A lscsm $\varphi$ is \textbf{strongly-density-like} if there is a constant $c=c(\varphi)>0$ such that, for all $\varepsilon>0$ and $(F_k) \in \mathcal{F}_{\mathrm{disj}} \cap \mathcal{G}_{\varphi,c \varepsilon}$, there exists an infinite set $I \subseteq \omega$ such that $\varphi(\bigcup_{i \in I}F_i) < \varepsilon$.
\end{defi}

\begin{rmk}
\label{rem-strongly-density-like}
Observe that if $\bm{\mu}$ is a sequence of lscsms with pairwise disjoint finite supports, then $\varphi_{\bm{\mu}}=\sup_n \mu_n$ is strongly-density-like with any constant $c(\varphi_{\bm{\mu}})<1$. Thus, if $\I$ is a generalized density ideal, then there is a strongly-density-like lscsm $\varphi$ with $\I=\Exh(\varphi)$. 
\end{rmk}

In the following example we show that there exist density-like lscsms which are not strongly-density-like, cf. also Section \ref{sec:concludingrmk}.

\begin{example}
\label{ex-strongly-density-like}
%\textcolor{red}{[Formule in textstyle]} 
Let $h: \omega^2 \to \omega$ be the bijection defined in \eqref{eq:bijectionh}. For each $n \in \omega$ define $X_n:=h^{-1}[\{n\}\times \omega]$, so that $\{X_n\}$ is a partition of $\omega$ into infinite sets. Define $a_k:=1/(k+2)!$ for each $k \in \omega$ and note that $a_k\to 0$ as $k\to+\infty$ and $a_{k-1}>(k+1)a_k$ for all $k>0$. Moreover, set $\varphi:=\sup_n \mu_n$, where $(\mu_n)$ is the sequence of lscsms given by
$$
\forall n \in \omega, \forall A\subseteq \omega,\quad \mu_n(A)=a_n \min\{n+1,|A\cap X_n|\}.
$$
We claim that $\varphi$ is a density-like lscsm which is not strongly-density-like. 

First, we show that $\varphi$ is not strongly-density-like. To this aim, fix an arbitrary constant $c>0$ and a positive integer $k$ such that $\nicefrac{1}{k}\le c$. Then, set $\varepsilon:=(k+1)a_k$ and $F_n:=\{h^{-1}(k,n)\}$ for each $n \in \omega$. It follows that 
$$
\textstyle \forall n\in \omega,\quad \varphi(F_n)=\mu_k(F_k)=a_k< \frac{(k+1)a_k}{k}\le c\varepsilon,
$$
hence $(F_n) \in \mathcal{F}_{\mathrm{disj}} \cap \mathcal{G}_{\varphi,c\varepsilon}$. On the other hand, for each infinite set $I\subseteq \omega$, we have 
$
%\textstyle 
\varphi(\bigcup_{i \in I}F_i)= \mu_k(\bigcup_{i \in I}F_i)=(k+1)a_k=\varepsilon.
$ 

Now let us show that $\varphi$ is density-like. Fix $\varepsilon>0$ and define $\delta:=a_k$, where $k$ is an integer such that $(k+1)a_k<\varepsilon$. Fix also $(F_k) \in \mathcal{F}_{\mathrm{disj}} \cap \mathcal{G}_{\varphi,\delta}$. Note that $F_k \cap \bigcup_{n\le k-1}X_n=\emptyset$ for each $k \in \omega$: indeed 
$$
\textstyle \forall x \in \bigcup_{n\le k-1}X_n,\quad \varphi(\{x\}) \ge a_{k-1} > (k+1)a_k \ge a_k =\delta.
$$ 
Therefore
%$$
%\forall I \in [\omega]^\omega,\quad \varphi\left(\bigcup_{i \in I}F_i\right)\le \varphi\left(\bigcup_{n\ge k}X_n\right)=\sup_{k\ge n} \mu_k(X_k)=(k+1)a_k<\varepsilon,
%$$
$$
\forall I \in [\omega]^\omega,\quad \varphi(\bigcup_{i \in I}F_i)\le \varphi(\bigcup_{n\ge k}X_n)=\sup_{k\ge n} \mu_k(X_k)=(k+1)a_k<\varepsilon,
$$
concluding the proof.
\end{example}

\begin{prop}\label{lem:DL}
Fix $q \in [1,\infty)$. Then the set of strongly-density-like lscsms is $q$-convex, that is, for each strongly-density-like lscsms $\varphi_1,\ldots,\varphi_k$ and $a_1,\ldots,a_k \in [0,1]$ with $\sum_{i\le k} a_i=1$, the lscsm 
$
\varphi:=(\sum_{i\le k}a_i \varphi_i^q)^{1/q}
$  
is strongly-density-like. In addition, a witnessing constant of $\varphi$ is $c(\varphi)=\frac{1}{2}\min\{c(\varphi_1),\ldots,c(\varphi_k)\}$. 
\end{prop}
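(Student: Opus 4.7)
The plan is to establish this in three steps. First, I would verify that $\varphi$ is an lscsm: monotonicity and lower semicontinuity descend pointwise from the $\varphi_i$'s, and subadditivity follows from Minkowski's inequality in $\ell^q$ applied to the vectors $(a_i^{1/q}\varphi_i(\cdot))_{i\le k}$. This step is routine.

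For the strongly-density-like property with constant $c:=\frac{1}{2}\min_i c(\varphi_i)$, I would fix $\varepsilon>0$ together with $(F_n)\in \mathcal{F}_{\mathrm{disj}}$ satisfying $\varphi(F_n)<c\varepsilon$ for all $n$, and introduce the rescaled submeasures $\psi_i:=a_i^{1/q}\varphi_i$. Two observations drive the argument: from $\varphi^q=\sum_j a_j\varphi_j^q \ge a_i\varphi_i^q$ one has $\psi_i\le \varphi$ pointwise, so $\psi_i(F_n)<c\varepsilon$; and, since positive scaling preserves the strongly-density-like constant, each $\psi_i$ is strongly-density-like with constant $c(\varphi_i)\ge 2c$.

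The key step is a \emph{dyadic-tier} thinning. For each $n$ and each $i$, I would define the tier $\theta_n(i)$ to be the unique $j\in \mathbb{N}$ with $\psi_i(F_n)\in[c\varepsilon/2^{j+1}, c\varepsilon/2^j)$, and $\theta_n(i):=\infty$ when $\psi_i(F_n)=0$. An iterated pigeonhole on each coordinate $i$ (each $\theta_n(i)$ ranges over the countable set $\mathbb{N}\cup\{\infty\}$, and there are only finitely many coordinates) yields an infinite subsequence on which $\theta_n\equiv\theta$ is a fixed profile. On this subsequence, combining $\sum_i \psi_i(F_n)^q=\varphi(F_n)^q<(c\varepsilon)^q$ with the tier lower bound $\psi_i(F_n)\ge c\varepsilon/2^{\theta(i)+1}$ for $\theta(i)<\infty$ yields the crucial inequality
\begin{equation}\label{eq:tier-constraint-plan}
\sum_{i:\theta(i)<\infty} \frac{1}{2^{q(\theta(i)+1)}} < 1.
\end{equation}

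Finally, for each $i$ with $\theta(i)=j<\infty$, the bound $\psi_i(F_n)<c\varepsilon/2^j\le c(\varphi_i)\varepsilon/2^{j+1}$ would let me apply the strongly-density-like property of $\psi_i$ with target $\varepsilon/2^{j+1}$; iterating over the finitely many $i$'s, with earlier bounds persisting under further thinning by monotonicity, would produce an infinite $I$ with $\psi_i(F_I)<\varepsilon/2^{\theta(i)+1}$ for every $i$ (and $\psi_i(F_I)=0$ when $\theta(i)=\infty$). The main obstacle, namely obtaining a constant uniform in $k$, is resolved precisely by \eqref{eq:tier-constraint-plan}, which makes the tier-wise summation telescope:
\[
\varphi(F_I)^q=\sum_i \psi_i(F_I)^q < \varepsilon^q\sum_{i:\theta(i)<\infty}\frac{1}{2^{q(\theta(i)+1)}}<\varepsilon^q,
\]
so that $\varphi(F_I)<\varepsilon$.
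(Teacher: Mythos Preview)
Your dyadic-tier pigeonhole step does not work as stated. The profile $\theta_n=(\theta_n(1),\ldots,\theta_n(k))$ takes values in the \emph{countably infinite} set $(\mathbb{N}\cup\{\infty\})^k$, and there is no reason any single profile is achieved infinitely often. Concretely, even with $k=1$ and $a_1=1$ (so $\psi_1=\varphi_1$), nothing prevents $\psi_1(F_n)\in[c\varepsilon/2^{n+1},c\varepsilon/2^n)$ for every $n$, giving $\theta_n(1)=n$ and hence no constant subsequence. The failure mode is precisely that $\psi_i(F_n)$ may tend to $0$ through positive values; your scheme then forces $\theta_n(i)\to\infty$ through $\mathbb{N}$, so neither a fixed finite tier nor the value $\infty$ is hit infinitely often, and your later assertion ``$\psi_i(F_I)=0$ when $\theta(i)=\infty$'' has no subsequence to stand on. The rest of your argument (the constraint \eqref{eq:tier-constraint-plan} and the final summation) is correct once a constant profile is available, but producing that profile is exactly the missing step.

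The paper sidesteps this by bucketing \emph{linearly} in the unscaled submeasures: it sets $z_{i,j}:=\lfloor \varphi_i(F_j)/(c\varepsilon)\rfloor$ and observes, from $a_i^{1/q}\varphi_i(F_j)\le\varphi(F_j)<c\varepsilon$, that $z_{i,j}<a_i^{-1/q}$; thus each coordinate ranges over a \emph{finite} set and ordinary pigeonhole yields a constant vector $\ell$. The strongly-density-like property of $\varphi_i$ with constant $2c$ then gives $\varphi_i\bigl(\bigcup_{t\in T}F_t\bigr)<\tfrac{(\ell_i+1)\varepsilon}{2}$, and Minkowski's inequality together with $\sum_i a_i\ell_i^q<1$ finishes. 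Your approach can be repaired---for each $i$ either some finite tier recurs infinitely often, or $\psi_i(F_n)\to 0$ and one may thin further to make that coordinate's contribution as small as desired before summing---but this extra case analysis has to be carried out explicitly, and the slack needed to absorb the ``$\to 0$'' coordinates must be extracted from the strict inequality in \eqref{eq:tier-constraint-plan}. The linear bucketing avoids all of this.
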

\begin{proof}
Let $\varphi_1,\ldots,\varphi_k$ be strongly-density-like lscsms, fix $a_1,\ldots,a_k \in [0,1]$ with $\sum_{i\le k} a_i=1$, and define the lscsm 
$\varphi:=(\sum_{i\le k}a_i \varphi_i^q)^{1/q}$. Set $c:=\frac{1}{2}\min_{i\le k}c(\varphi_i)$, so that $2c$ is a witnessing constant for each $\varphi_i$. 

Fix $\varepsilon>0$ and a sequence $(F_j) \in \mathcal{F}_{\mathrm{disj}}\cap \mathcal{G}_{\varphi,c\varepsilon}$. For each $j \in \omega$ and $i \in \{1,\ldots,k\}$ define the integer 
$
z_{i,j}:=\left\lfloor \varphi_i(F_j)/c\varepsilon\right\rfloor.
%z_{i,j}:=\left\lfloor \frac{\varphi_i(F_j)}{c\varepsilon}\right\rfloor.
$ 
Note that $z_{i,j}<a_i^{-1/q}$, indeed
$$
\textstyle \forall i=1,\ldots,k,\quad c\varepsilon>\varphi(F_j)\ge a_i^{1/q}\varphi_i(F_j) \ge a_i^{1/q}z_{i,j}c\varepsilon.
$$
Considering that $L:=\prod_{i=1}^k (\omega \cap [0,a_i^{-1/q}))$ is finite, there exists $\ell=(\ell_1,\ldots,\ell_k) \in L$ and an infinite set $J\subseteq \omega$ such that $z_{i,j}=\ell_i$ for all $i =1,\ldots,k$ and $j \in J$. 

At this point, observe that $\sum_{i\le k}a_i\ell_i^q<1$, indeed 
$$
%\textstyle 
%\forall j \in J,\quad 
\sum_{i\le k}a_i(c\varepsilon\ell_i)^q=\sum_{i\le k}a_i(c\varepsilon z_{i,\min J})^q\le \sum_{i\le k}a_i\varphi_i(F_{\min J})^q=\varphi(F_{\min J})^q < (c\varepsilon)^q.
$$
Since each $\varphi_i$ is strongly-density-like with witnessing constant $2c$, we have that $\varphi_i(F_j)<(z_{i,j}+1)c\varepsilon=(2c)\cdot \frac{(\ell_i+1)\varepsilon}{2}$ for all $j \in J$. Hence there exists an infinite set $T\subseteq J$ such that 
$
%\forall i=1,\ldots,k,\quad 
\varphi_i(\bigcup_{t \in T}F_t)<\frac{(\ell_i+1)\,\varepsilon}{2}.
$ 
for all $i=1,\ldots,k$. 

Thanks to Minkowski's inequality, we conclude that
$$
 %\varphi(\bigcup_{t \in T}F_t)<\left(\sum_{i\le k}a_i \left(\frac{(\ell_i+1)\,\varepsilon}{2}\right)^q\right)^{1/q}
 \textstyle 
\varphi(\bigcup_{t \in T}F_t)<\frac{\varepsilon}{2}\left(\sum_{i\le k}a_i (\ell_i+1)^q\right)^{1/q}\le \frac{\varepsilon}{2}\left(\left(\sum_{i\le k}a_i\ell_i^q\right)^{1/q}+1\right)<\varepsilon,
%\varphi(\bigcup_{t \in T}F_t)<\frac{\varepsilon}{2}\,(\sum_{i\le k}a_i (\ell_i+1)^q)^{1/q}\le \frac{\varepsilon}{2}\,((\sum_{i\le k}a_i\ell_i^q)^{1/q}+1)<\varepsilon,
$$
which proves that $\varphi$ is strongly-density-like.
\end{proof}

\begin{defi}
\label{def:DL}
%Let $\bm{\varphi}=(\varphi_n)$ be a sequence a strongly-density-like lscsms on $\omega$. 
%Moreover, let $q=(q_n)$ be a real sequence in $[1,\infty)$, and fix sequences $a=(a_n) \in [0,1]^\omega$ and $S=(S_n) \in \mathcal{F}_{\mathrm{disj}}$ such that $\sum_{k \in I_n}a_k=1$ for all $n \in \omega$.
%
%Then, let $\psi=\psi(\bm{\varphi},q,a,S)$ be the lscsm on $\omega^2$ defined by
%\begin{equation}\label{eq:definitiondlideal}
%%\textstyle 
%\forall A\subseteq \omega^2,\quad \psi(A):=\sup_{n \in \omega}\,(\sum_{k \in S_n}a_k \varphi_k^{q_n}(A_{(k)}))^{1/q_n}
%\end{equation}
%
%An ideal $\mathcal{I}$ is said to be a \textbf{DL ideal} if it is isomorphic to some $\mathrm{Exh}(\psi)$, where $\psi$ is a lscsm on $\omega^2$ as in \eqref{eq:definitiondlideal}.
An ideal $\mathcal{I}$ is said to be a \textbf{DL-ideal} if it is isomorphic to some $\mathrm{Exh}(\psi)$, where $\psi=\psi(\bm{\varphi},q,a,S)$ is a lscsm on $\omega^2$ defined by
\begin{equation}\label{eq:definitiondlideal}
%\textstyle 
\forall A\subseteq \omega^2,\quad \psi(A):=\sup_{n \in \omega}\,(\sum_{k \in S_n}a_k \varphi_k^{q_n}(A_{(k)}))^{1/q_n},
\end{equation}
where $\bm{\varphi}=(\varphi_n)$ is a sequence a strongly-density-like lscsms on $\omega$, $q=(q_n)$ is a sequence in $[1,\infty)$, and $a=(a_n) \in [0,1]^\omega$ and $S=(S_n) \in \mathcal{F}_{\mathrm{disj}}$ are sequences such that $\sum_{k \in S_n}a_k=1$ for all $n \in \omega$.
\end{defi}

For the rest of this section, it may be helpful to take in mind the following example of DL-ideals.
\begin{example}\label{example:answerQuestion3}
Fix a strictly increasing sequence $(\iota_n)$ of nonnegative integers such that $\iota_0:=0$ and $\lim_n |I_n|=\infty$, where $I_n:=[\iota_n,\iota_{n+1})$ for all $n \in \omega$. 
In particular, thanks to Remark \ref{rem-strongly-density-like}, the map $\mathcal{P}(\omega) \to [0,\infty]$ defined by $S\mapsto \sup_m |S\cap I_m|/|I_m|$ is a strongly-density-like lscsm. It follows that $\mathcal{I}:=\{h[A]: A \in \mathrm{Exh}(\psi)\}$ is a DL-ideal, where $\psi: \omega^2 \to [0,\infty]$ is the lscsm defined by
$$
\forall A\subseteq \omega^2, \quad 
\psi(A):=\sup_{n \in \omega} \left(\frac{1}{|I_n|}\sum_{k \in I_n}\sup_{m\in \omega} \frac{|A_{(k)}\cap I_m|}{|I_m|}\right).
$$

As it will be shown below in Corollary \ref{cor:finalcortalldensitylike}, the above ideal provides an affirmative answer to Question \ref{sec:tallquestionoriiginal}.
%, cf. also Theorem \ref{thm:mainsecondparttall}.}
\end{example}

\begin{rmk}\label{rmk:nonpathologicaltall}
It is not difficult to see that, if $\psi=\psi(\bm{\varphi},q,a,S)$ is a lscsm on $\omega^2$ as in \eqref{eq:definitiondlideal} such that $q$ is the constant sequence $(1)$ and each $\varphi_n$ is nonpathological, then $\psi$ is nonpathological as well. 
For each lscsm $\phi$ on $\omega$ and for each $k \in \omega$, let $\tilde{\phi}^{(k)}$ be the lscsm on $\omega^2$ defined by $\tilde{\phi}^{(k)}(A):=\phi(A_{(k)})$ for all $A\subseteq \omega^2$. Then, with the same notation of Remark \ref{rmk:nonpathological}, we know that
$$%\begin{equation}\label{eq:hpnonpathological}
\forall A\subseteq \omega^2, \forall k \in \omega,\quad \varphi_k(A_{(k)})=\sup_{\eta \in \mathcal{N}(\varphi_k)}\eta(A_{(k)})
=\sup_{\tilde{\eta}^{(k)} \in \mathcal{N}_k(\varphi_k)}\tilde{\eta}^{(k)}(A),
$$%\end{equation}
where $\mathcal{N}_k(\varphi_k):=\{\tilde{\eta}^{(k)}: \eta \in \mathcal{N}(\varphi_k)\}$ (note that, if $k\neq k^\prime$, then measures in $\mathcal{N}_k(\varphi_k)$ have disjoint supports from measures in $\mathcal{N}_{k^\prime}(\varphi_{k^\prime})$). 
Let $\tilde{\mathcal{N}}(\psi)$ be the set of finitely additive measures on $\omega^2$ which are pointwise dominated by $\psi$. 
Then
%It follows by \eqref{eq:hpnonpathological} that
\begin{displaymath}
\begin{split}
\forall A\subseteq \omega^2,\quad \sup_{\tilde{\eta} \in \tilde{\mathcal{N}}(\psi)}\tilde{\eta}(A)&\le \psi(A)=\sup_{n \in \omega} \sum_{k \in S_n} a_k \sup_{\tilde{\eta}^{(k)} \in \mathcal{N}_k(\varphi_k)} \tilde{\eta}^{(k)}(A)\\
&=\sup_{n \in \omega} \sup_{\substack{\tilde{\eta}^{(k)} \in \mathcal{N}_k(\varphi_k), \\ \text{ with }k \in S_n}} \sum_{k \in S_n} a_k \tilde{\eta}^{(k)}(A) \le \sup_{\tilde{\eta} \in \tilde{\mathcal{N}}(\psi)}\tilde{\eta}(A),
\end{split}
\end{displaymath}
where the last inequality is justified by the fact that each $\sum_{k \in S_n} a_k \tilde{\eta}^{(k)}$ is finitely additive measure which is dominated by $\psi$. Therefore $\psi$ is nonpathological. 
\end{rmk}

\begin{rmk}
Each generalized density ideal is a $\mathrm{DL}$-ideal. Indeed, let $\bm{\mu}=(\mu_n)$ be a sequence of lscsms with pairwise disjoint finite supports such that $\mathcal{I}=\mathrm{Exh}(\sup_n \mu_n)$. Moreover, let $h: \omega^2 \to \omega$ be the bijection defined in \eqref{eq:bijectionh} and $f: \omega^2 \to \omega$ be a bijection such that 
$$
\forall n,m \in \omega,\quad \mathrm{supp}(\mu_{h(n,m)})\subseteq f[\{n\}\times\omega].
$$
Then it suffices to set $\bm{\varphi}=(\varphi_n)$, where $\varphi_n(A)=\sup_{m\in\omega}\mu_{h(n,m)}(f[\{n\}\times A])$ for all $n\in\omega$ and $A\subseteq\omega$, $q=(1)$, $a=(1)$, and $S=(\{n\})$. It follows that each $\varphi_n$ is strongly-density-like (cf. Remark \ref{rem-strongly-density-like}) and that $\mathcal{I}$ is isomorphic to $\Exh(\psi)$, where $\psi=\psi(\bm{\varphi},q,a,S)$ is the lscsm on $\omega^2$ defined as in \eqref{eq:definitiondlideal}.
\end{rmk}

\begin{prop}\label{prop:DL}
Let $\I$ be an Erd\H{o}s–Ulam ideal. Then $\widehat{\I}$ is a $\mathrm{DL}$-ideal.
\end{prop}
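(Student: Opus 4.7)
The plan is to show that $\widehat{\I}$, viewed through $h$ as an ideal on $\omega^2$, is generated by a lscsm of the exact shape prescribed in Definition \ref{def:DL}, using the indicator lscsm $\chi:\mathcal{P}(\omega)\to\{0,1\}$ defined by $\chi(X)=1$ if and only if $X\neq\emptyset$. The first ingredient is the standard fact that every Erd\H{o}s--Ulam ideal admits a density-ideal representation with probability measures on pairwise disjoint supports whose union is $\omega$: concretely, one partitions $\omega$ into intervals $S_n$ whose boundaries $k_n$ satisfy $F_{k_{n+1}}\geq 2F_{k_n}$ (where $F_n:=\sum_{i\le n}f(i)$), and then sets $\rho_n(\{i\}):=f(i)/\sum_{j\in S_n}f(j)$ for each $i\in S_n$. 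This gives $(S_n)\in\mathcal{F}_{\mathrm{int}}$ with $\bigcup_n S_n=\omega$ and each $\rho_n$ a probability measure on $S_n$; the equality $\mathcal{I}=\mathrm{Exh}(\sup_n \rho_n)$ follows from bounded-ratio comparisons between the probability measures $\sigma_n$ arising in the definition of $\varphi_f$ on $[0,n]$ and the $\rho_n$, using the defining Erd\H{o}s--Ulam assumption $f(n)=o(F_n)$.

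Having fixed such a representation, the proof of Theorem \ref{thm:analyticPdieal} shows that $\widehat{\I}$ is isomorphic via $h$ to $\mathrm{Exh}(\nu)$, where $\nu$ is the lscsm on $\omega^2$ given by $\nu(B)=(\sup_n \rho_n)(\{m\in\omega:B_{(m)}\neq\emptyset\})$. Since each $\rho_n$ is a probability measure supported on $S_n$, expanding yields
\[
\nu(B)=\sup_{n\in\omega}\sum_{m\in S_n}\rho_n(\{m\})\,\chi(B_{(m)}).
\]
A quick check confirms that $\chi$ is indeed a lscsm and is strongly-density-like with witnessing constant $c=1$: if $\varepsilon\le 1$, the condition $\chi(F_k)<\varepsilon$ forces $F_k=\emptyset$, so $\mathcal{F}_{\mathrm{disj}}\cap\mathcal{G}_{\chi,\varepsilon}$ is empty and the requirement is vacuous; if $\varepsilon>1$, any infinite $I$ works since $\chi(\bigcup_{i\in I}F_i)\le 1<\varepsilon$.

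Finally, setting $\bm{\varphi}=(\chi,\chi,\ldots)$, $q=(1,1,\ldots)$, $S=(S_n)$, and $a_k=\rho_n(\{k\})$ for the unique $n$ with $k\in S_n$, all conditions of Definition \ref{def:DL} are met: $(S_n)\in\mathcal{F}_{\mathrm{disj}}$, $\sum_{k\in S_n}a_k=\rho_n(S_n)=1$, $q_n\in[1,\infty)$, and each $\varphi_k$ is strongly-density-like. The resulting lscsm $\psi(\bm{\varphi},q,a,S)$ defined by \eqref{eq:definitiondlideal} coincides with $\nu$, so $\widehat{\I}$ is isomorphic to $\mathrm{Exh}(\psi)$, proving that it is a DL-ideal. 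The only nontrivial step is securing the probability-measure representation of $\mathcal{I}$ in the first paragraph; once this is in hand, the identification is automatic, since $\chi$ has been designed precisely to convert ``column nonempty'' into an lscsm evaluation.
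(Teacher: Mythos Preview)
Your proof is correct and is essentially the same as the paper's: both use the probability-measure representation of an Erd\H{o}s--Ulam ideal (the paper cites \cite[Example 1.2.3.(d), Theorem 1.13.3(a), and Lemma 1.13.9.(Z3)]{MR1711328}, while you sketch the standard doubling-interval construction), then identify $\nu$ with a $\psi(\bm{\varphi},q,a,S)$ where $\bm{\varphi}$ is the constant indicator lscsm, $q=(1)$, $S_n$ are the supports of the measures, and $a_k$ is the point mass at $k$. The only cosmetic difference is that you verify $\chi$ is strongly-density-like with constant $c=1$, whereas the paper simply asserts it is strongly-density-like.
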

\begin{proof}
By \cite[Example 1.2.3.(d), Theorem 1.13.3(a), and Lemma 1.13.9.(Z3)]{MR1711328}, there is a sequence $\bm{\mu}=(\mu_n)$ of probability measures on $\omega$ such that $\mathcal{I}=\mathrm{Exh}(\sup_n \mu_n)$, and $(M_n) \in \mathcal{F}_{\mathrm{disj}}$, where $M_n:=\mathrm{supp}(\mu_n)$ for each $n \in \omega$. Then $\widehat{\I}$ is isomorphic to the ideal $\mathrm{Exh}(\nu)$ on $\omega^2$, where $\nu$ is the lscsm defined by 
$$
\textstyle \forall A\subseteq \omega^2,\quad \nu(A)=\sup_{n\in \omega} \mu_n(\{k \in \omega: A_{(k)}\neq \emptyset\}),
$$
cf. \eqref{eq:nu} in the proof of Theorem \ref{thm:analyticPdieal}. 

To conclude the proof, we show that $\nu=\psi$, for some $\psi=\psi(\bm{\varphi},q,a,S)$. To this aim, let $\bm{\varphi}$ be the constant sequence $(\varphi)$, where $\varphi$ is the strongly-density-like lscsm defined by $\varphi(\emptyset)=0$ and $\varphi(S)=1$ for all nonempty $S\subseteq \omega$. 
Let also $q$ be the constant sequence $(1)$, $S_n=M_n$, and $a_k:=\sup_n \mu_n(\{k\})$ for all $n \in \omega$ (note that $\sum_{k\in S_n}a_k=1$ for all $n \in \omega$). It follows that
\begin{displaymath}
%\begin{split}
\forall A\subseteq \omega^2,\quad \psi(A)=\sup_{n \in \omega}\sum_{k \in S_n}a_k \varphi(A_{(k)})=\sup_{n \in \omega}\mu_n(\{k \in S_n: A_{(k)}\neq \emptyset\})=\nu(A).
%\end{split}
\end{displaymath}
Therefore $\widehat{\I}$ is a $\mathrm{DL}$-ideal.
\end{proof}

\begin{thm}\label{DL-density-like}
%Let $\mathcal{I}$ be a $\mathrm{DL}$-ideal 
%which is isomorphic to some $\mathrm{Exh}(\psi)$, where $\psi=\psi(\bm{\varphi},q,a,S)$ is a lscsm 
%%as in Definition \ref{def:DL}, where 
%such that 
%$\inf_n c(\varphi_n)>0$. Then $\mathcal{I}$ is density-like.
Let $\psi=\psi(\bm{\varphi},q,a,S)$ be a lscsm on $\omega^2$ 
as in \eqref{eq:definitiondlideal} 
%as in Definition \ref{def:DL}, where 
such that 
$\inf_n c(\varphi_n)>0$. Then $\psi$ is a density-like lscsm.
\end{thm}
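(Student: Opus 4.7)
The plan is to decompose $\psi$ as a pointwise supremum of lscsms with pairwise disjoint supports and then invoke Theorem \ref{sup} to reduce the density-likeness of $\psi$ to the equi-density-likeness of the pieces. Concretely, for each $n\in\omega$ I set
\[
\forall A\subseteq\omega^2,\quad \psi_n(A):=\left(\sum_{k\in S_n}a_k\,\varphi_k^{q_n}(A_{(k)})\right)^{1/q_n},
\]
so that $\psi=\sup_n\psi_n$. Since $\mathrm{supp}(\psi_n)\subseteq S_n\times\omega$ and $(S_n)\in\mathcal{F}_{\mathrm{disj}}$, the sequence $(\psi_n)$ has pairwise disjoint supports, and Theorem \ref{sup} reduces the claim to checking that $(\psi_n)$ is equi-density-like.

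Next, I lift each $\varphi_k$ to a lscsm $\tilde\varphi_k$ on $\omega^2$ via $\tilde\varphi_k(A):=\varphi_k(A_{(k)})$, so that $\psi_n=\bigl(\sum_{k\in S_n}a_k\tilde\varphi_k^{q_n}\bigr)^{1/q_n}$ is a genuine finite $q_n$-convex combination of lscsms on $\omega^2$. A short direct check (with $\omega^2$ playing the role of $\omega$ throughout) shows that $\tilde\varphi_k$ is strongly-density-like with the same witnessing constant $c(\varphi_k)$: given $(F_j)\in\mathcal{F}_{\mathrm{disj}}\cap\mathcal{G}_{\tilde\varphi_k,c(\varphi_k)\varepsilon}$, the slices $((F_j)_{(k)})_j$ are pairwise disjoint in $\omega$, and, depending on whether $\{j:(F_j)_{(k)}\neq\emptyset\}$ is infinite or finite, one either applies the strongly-density-like property of $\varphi_k$ along that index set, or else passes to an infinite subsequence on which $\tilde\varphi_k$ vanishes identically.

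At this point, Proposition \ref{lem:DL} applied to the finite $q_n$-convex combination $\psi_n$ yields that $\psi_n$ is strongly-density-like with witnessing constant at least $\frac{1}{2}\min_{k\in S_n}c(\varphi_k)\geq c/2$, where $c:=\inf_n c(\varphi_n)>0$ by hypothesis. Thus, for any $\varepsilon>0$, the choice $\delta:=(c/2)\varepsilon$ witnesses equi-density-likeness of $(\psi_n)$ uniformly in $n$: for every $n\in\omega$ and every $(F_k)\in\mathcal{F}_{\mathrm{disj}}\cap\mathcal{G}_{\psi_n,\delta}$, there exists an infinite $I\subseteq\omega$ with $\psi_n(\bigcup_{i\in I}F_i)<\varepsilon$. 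Theorem \ref{sup} then concludes that $\psi$ is density-like.

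I expect the main technical hurdle to be the slice-lifting step: one has to be careful with the case in which only finitely many slices $(F_j)_{(k)}$ are nonempty, so that $\tilde\varphi_k$ is not simply \emph{$\varphi_k$ on another copy of $\omega$}. Once that bookkeeping is in place, the applications of Proposition \ref{lem:DL} and Theorem \ref{sup} are routine, and the role of the hypothesis $\inf_n c(\varphi_n)>0$ is precisely to ensure that the constant $c/2$ produced by Proposition \ref{lem:DL} is positive and independent of $n$, which is exactly what equi-density-likeness requires.
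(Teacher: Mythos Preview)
Your proposal is correct and follows essentially the same route as the paper: define $\psi_n$, lift the $\varphi_k$ to $\tilde\varphi_k$ on $\omega^2$, apply Proposition~\ref{lem:DL} to get a uniform strongly-density-like constant $\ge c/2$ for each $\psi_n$, and conclude via Theorem~\ref{sup}. You are in fact slightly more careful than the paper in spelling out the case distinction for the slice-lifting step (the paper simply asserts $c(\tilde\varphi_k)=c(\varphi_k)$), but this is the same argument.
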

\begin{proof}
For each $k \in \omega$ define the lscsm $\tilde{\varphi}_k$ on $\omega^2$ by
$$
\forall A\subseteq \omega^2,\quad 
\tilde{\varphi}_k(A)=\varphi_k(A_{(k)}).
$$
Then each $\tilde{\varphi}_k$ is a strongly-density-like lscsm such that $c(\tilde{\varphi}_k)=c(\varphi_k)$. Moreover, for each $n \in \omega$, define the lscsm $\psi_n$ by
$$
\forall A\subseteq \omega^2,\quad \psi_n(A)=(\sum_{k \in S_n}a_k\tilde{\varphi}_k(A)^{q_n})^{1/q_n}.
$$
It follows by Proposition \ref{lem:DL} that each lscsm $\psi_n$ is strongly-density-like with witnessing constant $c(\psi_n)=\frac{1}{2}\min\{c(\tilde{\varphi}_k): k \in S_n\}$. Since $\inf_n c(\varphi_n)>0$, we have also $\inf_n c(\psi_n)>0$, which implies that $(\psi_n)$ is equi-density-like sequence of lscsms with pairwise disjoint supports (indeed $\mathrm{supp}(\psi_n)=\bigcup_{k \in S_n}\{k\}\times \mathrm{supp}(\varphi_n)\subseteq S_n \times \omega$). Therefore, thanks to Theorem \ref{sup}, $\psi=\sup_n \psi_n$ is density-like.
\end{proof}

The following corollary is immediate.
\begin{cor}\label{cor:samevarphinimpliesdensitylike}
Let $\mathcal{I}$ be a $\mathrm{DL}$-ideal isomorphic to $\mathrm{Exh}(\psi)$, where $\psi=\psi(\bm{\varphi},q,a,S)$ is a lscsm 
on $\omega^2$ 
such that $\bm{\varphi}$ is a constant sequence. 
Then $\mathcal{I}$ is density-like.
\end{cor}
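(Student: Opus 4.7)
The plan is essentially a one-line reduction to Theorem \ref{DL-density-like}. If $\bm{\varphi}=(\varphi_n)$ is constant, say $\varphi_n=\varphi$ for every $n\in\omega$, then the witnessing constants satisfy $c(\varphi_n)=c(\varphi)>0$, so trivially $\inf_n c(\varphi_n)=c(\varphi)>0$. Hence the hypothesis of Theorem \ref{DL-density-like} is met, and $\psi=\psi(\bm{\varphi},q,a,S)$ is a density-like lscsm.

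Consequently $\mathrm{Exh}(\psi)$ is a density-like ideal (by definition, since it equals $\mathrm{Exh}$ of a density-like lscsm). To transfer this to $\mathcal{I}$, I would observe that density-likeness is preserved under isomorphism of ideals: if $f:\omega\to \omega^2$ is a bijection realizing the isomorphism between $\mathcal{I}$ and $\mathrm{Exh}(\psi)$, then $\mathcal{I}=\mathrm{Exh}(\psi\circ f_*)$, where $f_*:\mathcal{P}(\omega)\to \mathcal{P}(\omega^2)$ is the pushforward $A\mapsto f[A]$, and $\psi\circ f_*$ is clearly a density-like lscsm whenever $\psi$ is (the condition in Definition \ref{def:densitylike} depends only on the sequence of $\varphi$-values on pairwise disjoint finite sets, which is preserved under the bijection). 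Therefore $\mathcal{I}$ is density-like.

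There is no real obstacle: the whole content of the corollary is simply the observation that a constant sequence has positive infimum, so Theorem \ref{DL-density-like} applies. The only mildly nontrivial point is the isomorphism-invariance of the density-like property, which is routine from the formulation in Proposition \ref{equiv:submeasuresdensitylike}.
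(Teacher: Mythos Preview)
Your proof is correct and matches the paper's approach exactly: the paper presents this corollary as immediate from Theorem \ref{DL-density-like}, and the content is precisely the observation that a constant sequence of strongly-density-like lscsms trivially satisfies $\inf_n c(\varphi_n)>0$. Your remark on isomorphism-invariance of the density-like property is a reasonable (and routine) addition that the paper leaves implicit.
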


Note that, as it follows from the proof of Proposition \ref{prop:DL}, if $\mathcal{I}$ is an Erd\H{o}s–Ulam ideal, then $\widehat{\mathcal{I}}$ is isomorphic to $\mathrm{Exh}(\psi)$, where $\psi=\psi(\bm{\varphi},q,a,S)$ is a lscsm such that $\bm{\varphi}$ is a constant sequence. 
Therefore, thanks to Corollary \ref{cor:samevarphinimpliesdensitylike}, we obtain: % the following:
\begin{cor}\label{cor:simplified}
Let $\mathcal{I}$ be an Erd\H{o}s–Ulam ideal. Then $\widehat{\mathcal{I}}$ is a density-like ideal.
\end{cor}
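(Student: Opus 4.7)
The plan is to observe that this corollary falls out immediately from the combination of Proposition \ref{prop:DL} and Corollary \ref{cor:samevarphinimpliesdensitylike}, so the task really amounts to checking that the output of the former feeds cleanly into the hypothesis of the latter.

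First, I would invoke Proposition \ref{prop:DL} applied to the Erd\H{o}s--Ulam ideal $\mathcal{I}$, which supplies a lscsm $\psi = \psi(\bm{\varphi},q,a,S)$ on $\omega^2$ such that $\widehat{\mathcal{I}}$ is isomorphic to $\mathrm{Exh}(\psi)$. The crucial point here is not merely the conclusion of Proposition \ref{prop:DL} as stated, but the internal structure of its proof: the sequence $\bm{\varphi}$ produced there is the \emph{constant} sequence $(\varphi)$, where $\varphi$ is the trivial strongly-density-like lscsm given by $\varphi(\emptyset) = 0$ and $\varphi(T) = 1$ for every nonempty $T \subseteq \omega$. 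This feature is recorded in the paragraph immediately preceding the corollary, but it is worth flagging explicitly in the proof since it is what makes the next step applicable.

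With ``$\bm{\varphi}$ constant'' in hand, I would then appeal to Corollary \ref{cor:samevarphinimpliesdensitylike}, whose hypothesis is precisely that $\widehat{\mathcal{I}}$ is isomorphic to $\mathrm{Exh}(\psi(\bm{\varphi},q,a,S))$ for some constant sequence $\bm{\varphi}$. Its conclusion delivers that $\widehat{\mathcal{I}}$ is density-like, which is exactly the claim. There is essentially no obstacle at this stage: all the real work has already been absorbed into Theorem \ref{DL-density-like} (where $\inf_n c(\varphi_n) > 0$, automatic for a constant sequence, forces the supremum to be density-like via the equi-density-like machinery of Theorem \ref{sup}) and into Proposition \ref{prop:DL} (where the specific $\mathrm{DL}$-structure of $\widehat{\mathcal{I}}$ for $\mathcal{I}$ Erd\H{o}s--Ulam is unpacked). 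The only mild subtlety is ensuring that the ``constant $\bm{\varphi}$'' feature is made explicit rather than left implicit, since without it Corollary \ref{cor:samevarphinimpliesdensitylike} does not directly apply.
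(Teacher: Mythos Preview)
Your proposal is correct and matches the paper's own argument essentially verbatim: the paper also observes that the proof of Proposition \ref{prop:DL} yields a constant sequence $\bm{\varphi}$, and then invokes Corollary \ref{cor:samevarphinimpliesdensitylike} to conclude. Your explicit emphasis on extracting the ``constant $\bm{\varphi}$'' feature from the proof (rather than the bare statement) of Proposition \ref{prop:DL} is exactly the point the paper makes in the paragraph preceding the corollary.
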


Note that Corollary \ref{cor:simplified} is also a consequence of Theorem \ref{thm:lambdadensitylike}; however, the proof of the latter uses Proposition \ref{equiv:submeasuresdensitylike}, which in turn relies on \cite[Lemma 3.1]{MR2787694}.

Note that, thanks to Proposition \ref{lem:DL}, $\psi$ is the pointwise supremum of strongly-density-like lscsms. Now we show that, under some additional hypotheses, the exhaustive ideal generated by $\psi$ is tall.

\begin{prop}\label{proposition:DLidealtall}
Let $\psi=\psi(\bm{\varphi},q,a,S)$ be a lscsm on $\omega^2$ as in \eqref{eq:definitiondlideal} and assume that $\mathrm{Exh}(\varphi_n)$ is tall for all $n \in \omega$, $M:=\sup_{n,k}\varphi_n(\{k\})<\infty$, and 
%$\lim_n a_n=0$. 
$\max\{a_k^{1/q_n}: k \in S_n\}\to 0$ as $n\to \infty$. 
Then $\mathrm{Exh}(\psi)$ is tall.
\end{prop}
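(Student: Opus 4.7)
The plan is to verify tallness directly: fix an infinite $X \subseteq \omega^2$ and construct an infinite $Y \subseteq X$ with $\|Y\|_\psi = 0$. I split on whether some column $X_{(k)}$ is infinite. If it is, the tallness of $\mathrm{Exh}(\varphi_k)$ provides an infinite $B \subseteq X_{(k)}$ with $\|B\|_{\varphi_k} = 0$; set $Y := \{k\} \times B$. Since the supports $(S_n)$ are pairwise disjoint, $k$ lies in at most one $S_{n_0}$. If $k \notin \bigcup_n S_n$ then $\psi(Y) = 0$ outright. Otherwise only the $n_0$-th summand of $\psi$ ever sees $Y$, and one checks $\psi(Y \setminus F) = a_k^{1/q_{n_0}} \varphi_k(B \setminus F_{(k)})$, which is driven to $0$ by choosing $F_{(k)}$ large enough in $\omega$.

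In the complementary case every $X_{(k)}$ is finite, so $K := \{k \in \omega : X_{(k)} \neq \emptyset\}$ is infinite. If $K \setminus \bigcup_n S_n$ is infinite, pick one point from each column indexed by $k$ in an infinite subset of $K \setminus \bigcup_n S_n$; the resulting $Y$ makes every summand of $\psi$ equal to $0$ and we are done. Otherwise $K \cap \bigcup_n S_n$ is infinite, and since each $S_n$ is finite there is an increasing sequence $n_1 < n_2 < \ldots$ with $K \cap S_{n_i} \neq \emptyset$; choose $k_i \in K \cap S_{n_i}$, $y_i \in X_{(k_i)}$, and set $Y := \{(k_i,y_i) : i \in \omega\}$. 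By disjointness of $(S_n)$ the $k_i$'s are all distinct and each $(k_i,y_i)$ contributes to $\psi$ only via the $n_i$-th summand through the singleton $\{y_i\}$ in column $k_i$; thus for any $F \supseteq \{(k_i,y_i) : i < I_0\}$,
\[
\psi(Y \setminus F) \le \sup_{i \ge I_0} a_{k_i}^{1/q_{n_i}}\, \varphi_{k_i}(\{y_i\}) \le M \sup_{i \ge I_0} \max_{k \in S_{n_i}} a_k^{1/q_{n_i}},
\]
which tends to $0$ as $I_0 \to \infty$ by the hypotheses $M < \infty$ and $\max\{a_k^{1/q_n} : k \in S_n\} \to 0$.

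The entire argument rests on the pairwise disjointness of $(S_n)$, which is what guarantees that each point of $\omega^2$ activates at most one summand in the definition of $\psi$; once this is exploited, the three hypotheses of the proposition slot cleanly into the three cases (tallness handles an infinite column, boundedness and decay handle the transversal selection across blocks, and the remaining subcase is essentially trivial). The only step I expect to require some care is the bookkeeping in the transversal case — in particular, verifying that $(Y \setminus F)_{(j)}$ is either empty or a singleton of the form $\{y_i\}$ for every $j \in S_n$, so that $\varphi_{k_i}(\{y_i\}) \le M$ is indeed the right pointwise bound; this is exactly where disjointness of the supports closes the loop.
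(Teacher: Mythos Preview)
Your proof is correct and follows essentially the same approach as the paper's: split on whether some column is infinite (use tallness of $\mathrm{Exh}(\varphi_k)$) or all columns are finite (select a transversal meeting each $S_n \times \omega$ in at most one point, then bound by $M \cdot \max_{k \in S_n} a_k^{1/q_n}$). The paper unifies your subcases 2a and 2b into a single step by directly choosing $B \subseteq A$ infinite with $|B \cap (S_n \times \omega)| \le 1$ for all $n$, but your more explicit case split is equally valid.
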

\begin{proof}
Let $A\subseteq \omega^2$ be an infinite set. If $A \in \mathrm{Exh}(\psi)$ then the claim is trivial. Hence, suppose hereafter that $A\in \mathrm{Exh}(\psi)^+$, that is, 
\begin{equation}\label{eq:psiApositive}
\|A\|_\psi=\inf_{F \in [\omega^2]^{<\omega}}\,\sup_{n \in \omega}\,(\sum_{k \in S_n}a_k \varphi_k^{q_n}(A_{(k)}\setminus F))^{1/q_n}>0.
\end{equation}

Now, suppose that there exists $m \in \omega$ such that $A_{(m)}$ is infinite. Since $\mathrm{Exh}(\varphi_m)$ is tall, there exists an infinite set $B\subseteq A\cap (\{m\}\times \omega)$ such that $B_{(m)} \in \mathrm{Exh}(\varphi_m)$. It follows 
by the definition of $\psi$ 
that $B \in \mathrm{Exh}(\psi)$. 

Otherwise $A \in (\emptyset \times \mathrm{Fin}) \cap \mathrm{Exh}(\psi)^+$, so that $A_{(m)}$ is finite for each $m \in \omega$. Let $B$ be an infinite subset of $A$ such that $|B \cap \bigcup_{m \in S_n}A_{(m)}|\le 1$ for all $n \in \omega$ (which exists, otherwise $A$ itself would be finite, contradicting \eqref{eq:psiApositive}). It follows that
\begin{displaymath}
\begin{split}
\|B\|_\psi&\le \inf_{m\in \omega} \|B\setminus (\bigcup_{i\le m}S_i \times \omega)\|_{\psi} \le \inf_{m \in \omega}\psi( B\setminus (\bigcup_{i\le m}S_i \times \omega))\\
&\le \inf_{m \in \omega} \sup_{n\ge m}\,(\sum_{k \in S_n}a_k \varphi_k^{q_n}(B_{(k)}))^{1/q_n} \\
&\le M \, \limsup_{n\to\infty} \max\{a_k^{1/q_n}: k \in S_n\} =0.
\end{split}
\end{displaymath}
Therefore $B \in \mathrm{Exh}(\psi)$, concluding the proof.
\end{proof}

As a consequence, we obtain that:
\begin{cor}\label{cor:talldlideal}
Let $\psi=\psi(\bm{\varphi},q,a,S)$ be a lscsm on $\omega^2$ as in \eqref{eq:definitiondlideal} and assume that $\bm{\varphi}$ is the constant sequence $(\varphi)$, $q$ is a bounded sequence, $\mathrm{Exh}(\varphi)$ is tall, and $\lim_n a_n=0$. Then $\mathrm{Exh}(\psi)$ is tall.
\end{cor}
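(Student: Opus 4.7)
The plan is to apply Proposition \ref{proposition:DLidealtall} directly. Its three hypotheses are: (i) $\mathrm{Exh}(\varphi_n)$ is tall for every $n$, (ii) $M := \sup_{n,k} \varphi_n(\{k\}) < \infty$, and (iii) $\max\{a_k^{1/q_n} : k \in S_n\} \to 0$ as $n \to \infty$. The first is immediate, since $\bm{\varphi}$ is the constant sequence $(\varphi)$ and $\mathrm{Exh}(\varphi)$ is tall by hypothesis.

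For (ii), we first establish the general observation that tallness of $\mathrm{Exh}(\varphi)$ forces $\sup_j \varphi(\{j\}) < \infty$, whence $M = \sup_k \varphi(\{k\}) < \infty$ in our setting. Suppose for contradiction that $\sup_j \varphi(\{j\}) = \infty$ and pick $j_1 < j_2 < \cdots$ with $\varphi(\{j_n\}) \ge n$; set $A := \{j_n : n \in \omega\}$. For any infinite $B \subseteq A$ and any finite $F$, the set $B \setminus F$ contains $j_n$ for arbitrarily large $n$, so by monotonicity $\varphi(B \setminus F) = \infty$, and hence $\|B\|_\varphi = \infty$. Thus $A$ admits no infinite subset in $\mathrm{Exh}(\varphi)$, contradicting tallness.

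For (iii), let $Q$ be an upper bound for $(q_n)$. The disjointness of $(S_n)$ forces $\min S_n \to \infty$, because only finitely many of the $S_n$'s can intersect any fixed initial segment of $\omega$. Combined with $a_n \to 0$, this yields $\max_{k \in S_n} a_k \to 0$. Since $a_k \in [0,1]$ and $1/q_n \ge 1/Q$, we have $a_k^{1/q_n} \le a_k^{1/Q}$, so $\max_{k \in S_n} a_k^{1/q_n} \le \left(\max_{k \in S_n} a_k\right)^{1/Q} \to 0$. All three hypotheses being verified, Proposition \ref{proposition:DLidealtall} yields the claim; the only nontrivial step is the short argument for (ii), and there is no real obstacle.
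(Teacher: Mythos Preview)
Your proof is correct and follows essentially the same approach as the paper: both reduce to Proposition \ref{proposition:DLidealtall}, use the identical contradiction argument for $\sup_k \varphi(\{k\})<\infty$, and bound $a_k^{1/q_n}$ by $a_k^{1/Q}$ with $Q=\sup_n q_n$. You are merely a bit more explicit in justifying $\max_{k\in S_n} a_k \to 0$ via $\min S_n\to\infty$, which the paper leaves implicit.
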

\begin{proof}
First of all, we have $\sup_n \varphi(\{n\})<\infty$: indeed, in the opposite case, there would exist an increasing sequence $(n_k)$ in $\omega$ such that $\varphi(\{n_k\}) \ge k$ for all $k$ and every infinite subset of $\{n_k: k \in \omega\}$ would not belong to $\mathrm{Exh}(\varphi)$, contradicting the hypothesis that $\mathrm{Exh}(\varphi)$ is tall. Moreover, since $Q:=\sup_n q_n <\infty$, we obtain
$$
\textstyle \lim_n \max \{a_k^{1/q_n}: k \in S_n\}\le \lim_n \max \{a_k^{1/Q}: k \in S_n\} =0.
$$
The claim follows by Proposition \ref{proposition:DLidealtall}.
\end{proof}

\begin{thm}\label{DL not generalized density}
Let $\psi=\psi(\bm{\varphi}, q, a, S)$ be a lscsm on $\omega^2$ as in \eqref{eq:definitiondlideal} such that 
$$%\begin{equation}\label{eq:conditionnotgeneralizedensityideal}
\textstyle 
0<\inf_{n \in \omega}\| \omega\|_{\varphi_n} \le \sup_{n \in \omega} \varphi_n(\omega) <\infty
$$%\end{equation}
and $\max\{a_k^{1/q_n}: k \in S_n\} \to 0$ as $n\to \infty$. 
Then $\mathrm{Exh}(\psi)$ is not a generalized density ideal.
\end{thm}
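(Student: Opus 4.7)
The plan is to argue by contradiction. Suppose $\mathrm{Exh}(\psi)$ is a generalized density ideal. By Proposition \ref{prop:supportgeneralizeddensityideal} we may write $\mathrm{Exh}(\psi) = \mathrm{Exh}(\varphi_{\bm{\mu}})$ for some sequence $\bm{\mu}=(\mu_m)$ of lscsms with pairwise disjoint finite supports $G_m=\mathrm{supp}(\mu_m)$; we may also assume $(S_n)\in\mathcal{F}_{\mathrm{int}}$. The pairwise disjointness yields $\|A\|_{\varphi_{\bm{\mu}}}=\limsup_m\mu_m(A)$ for every $A\subseteq\omega^2$. The goal is to construct $A \subseteq \omega^2$ with $A \in \mathrm{Exh}(\psi)$ but $\limsup_m \mu_m(A) > 0$, yielding the desired contradiction.

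Write $\rho := \inf_n \|\omega\|_{\varphi_n}$ and $M := \sup_n \varphi_n(\omega)$. The starting estimate is that for every finite $F\subseteq\omega^2$ and every $n$, using $\sum_{k\in S_n}a_k=1$ and $\varphi_k(\omega\setminus F_{(k)}) \ge \rho$,
\[
\psi((\omega\times\omega)\setminus F) \ge \Bigl(\sum_{k\in S_n}a_k\varphi_k(\omega\setminus F_{(k)})^{q_n}\Bigr)^{1/q_n} \ge \rho,
\]
so $\|\omega\times\omega\|_\psi\ge\rho$. The same computation gives $\|(K,\infty)\times\omega\|_\psi\ge\rho$ for every $K\in\omega$, hence $\limsup_m\mu_m((K,\infty)\times\omega)\ge\rho$. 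A recursive diagonalization then produces a strictly increasing $(m_t)\in\omega^\omega$ and integers $K_0<K_1<\cdots$, with each interval $(K_t,K_{t+1}]$ equal to a union of full $S_n$-intervals and containing all columns of $G_{m_t}$, while $\mu_{m_t}((K_t,K_{t+1}]\times\omega)\ge\rho/2$. Set $H_t := G_{m_t}\cap ((K_t,K_{t+1}]\times\omega)$, so that $\mu_{m_t}(H_t)\ge\rho/2$ and the column-supports $C_{H_t}$ lie in the pairwise disjoint $S_n$-interval blocks $(K_t,K_{t+1}]$.

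The heart of the proof is a selection procedure yielding $H_t^\star \subseteq H_t$ with $|C_{H_t^\star}\cap S_n|\le 1$ for each $n$ and $\mu_{m_t}(H_t^\star)\ge\delta$ for a fixed $\delta>0$. This is obtained by a pigeonhole on columns: within each $S_n$-block intersecting $C_{H_t}$, retain only the column $k_{t,n}\in S_n\cap C_{H_t}$ that maximizes $\mu_{m_t}(H_t\cap(\{k\}\times\omega))$. Provided the multiplicity parameters $\Delta_t:=\max_n|S_n\cap C_{H_t}|$ and $L_t:=|\{n:S_n\cap C_{H_t}\ne\emptyset\}|$ stay bounded along a subsequence, this retains a positive fraction of $\mu_{m_t}(H_t)$ by subadditivity. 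In the remaining unbounded regimes, one invokes the hypothesis $\max_{k\in S_n}a_k^{1/q_n}\to 0$ to perform additional thinning within the heavy $S_n$-blocks, discarding enough columns to force the $\psi$-contribution to vanish while still keeping a controlled lower bound on $\mu_{m_t}(H_t^\star)$. Setting $A := \bigcup_t H_t^\star$, the disjointness of the blocks $(K_t,K_{t+1}]$ together with $|C_{H_t^\star}\cap S_n|\le 1$ ensures that each $S_n$ meets $C_A$ in at most one column, whence
\[
\psi\Bigl(A\setminus\bigcup_{t<T}H_t^\star\Bigr) \le M\sup_{n \ge N_T}\max_{k\in S_n}a_k^{1/q_n} \xrightarrow{T\to\infty} 0,
\]
so $A\in\mathrm{Exh}(\psi)$; meanwhile $\mu_{m_t}(A)\ge\mu_{m_t}(H_t^\star)\ge\delta$ for all $t$ gives $\limsup_m\mu_m(A)\ge\delta>0$, the sought contradiction. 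The main obstacle is precisely the selection step, where the simultaneous control of $\mu_{m_t}(H_t^\star)$ from below and $\psi(H_t^\star)$ from above demands a careful case analysis according to the multiplicity structure of $C_{H_t}$ within the blocks $(S_n)$.
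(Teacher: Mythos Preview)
Your overall architecture is reasonable, but the argument has a real gap precisely where you say the ``heart of the proof'' lies: the selection step producing $H_t^\star\subseteq H_t$ with $|C_{H_t^\star}\cap S_n|\le 1$ and $\mu_{m_t}(H_t^\star)\ge\delta$ for a \emph{fixed} $\delta>0$. Consider the scenario where $C_{H_t}\subseteq S_{n(t)}$ for a single block with $|S_{n(t)}|\to\infty$ and $\mu_{m_t}$ is a measure assigning equal mass $\rho/(2|S_{n(t)}|)$ to $H_t\cap(\{k\}\times\omega)$ for each $k\in S_{n(t)}$. Keeping one column yields $\mu_{m_t}(H_t^\star)\le \rho/(2|S_{n(t)}|)\to 0$, so no uniform $\delta$ exists. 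Your fallback (``additional thinning'' using $\max_{k\in S_n} a_k^{1/q_n}\to 0$) cannot help: that hypothesis only controls $\psi$, and says nothing about the distribution of $\mu_{m_t}$-mass across columns. If instead you retain $c_{n(t)}$ columns, you need $c_{n(t)}/|S_{n(t)}|$ bounded below for the $\mu$-lower bound, whereas from $\sum_{k\in S_n}a_k=1$ one has $\max_{k\in S_n}a_k\ge 1/|S_n|$, so the $\psi$-upper bound forces $(c_{n(t)}\max_{k\in S_{n(t)}}a_k)^{1/q_{n(t)}}\to 0$ and hence $c_{n(t)}/|S_{n(t)}|\to 0$; the two requirements conflict. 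Nothing you have assumed rules this configuration out.

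The paper avoids this obstacle by building the witnessing set from the $\varphi$-side rather than from the $\mu$-side. One first chooses $F_n\subseteq\omega$ with $\varphi_n(F_n)\ge\tfrac12\inf_k\|\omega\|_{\varphi_k}$ and sets $X_n:=\{n\}\times F_n$, recursively arranging that $X_n$ misses every $M_j:=\mathrm{supp}(\mu_j)$ that already met some earlier $X_i$. Then $X:=\bigcup_n X_n\notin\mathrm{Exh}(\psi)$, so there are $\varepsilon>0$ and $(n_k)$ with $\mu_{n_k}(X)>\varepsilon$; by construction each $M_{n_k}\cap X$ is contained in a \emph{single} $X_{m_k}$, hence $\mu_{n_k}(X_{m_k})>\varepsilon$ automatically, with no mass splitting to control. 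One then simply thins the infinite set $\{m_k\}$ to $M$ with $|M\cap S_n|\le 1$ and takes $Y:=\bigcup_{m\in M}X_m$; the single-column-per-$S_n$ structure gives $\|Y\|_\psi=0$ while $\mu_{n_k}(Y)>\varepsilon$ for those $k$ with $m_k\in M$. The key difference is that the one-column structure is imposed at construction time (on the $\varphi$-side, where one has full freedom), rather than extracted after the fact from an arbitrary $\mu_{m_t}$.
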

\begin{proof}
Suppose for the sake of contradiction that $\mathrm{Exh}(\psi)=\mathrm{Exh}(\sup_n \mu_n)$, where $(\mu_n)$ is a sequence of lscsms on $\omega^2$ with finite pairwise disjoint supports and set $M_n:=\mathrm{supp}(\mu_n)$ for each $n$ (note that $M_n \subseteq \omega^2$). It follows by the standing assumptions that there exists a sequence $(F_n) \in \mathcal{F}_{\mathrm{incr}}$ such that
\begin{equation}\label{eq:construction}
\forall n \in \omega,\quad 
\varphi_n(F_n) \ge \frac{\inf_{k \in \omega}\| \omega\|_{\varphi_k}}{2} 
\,\,\text{ and }\,\,
X_n \cap \bigcup_{\substack{k \in \omega: \\ M_k \cap \,\bigcup_{i\le n-1}X_i \neq \emptyset}} M_k =\emptyset,
\end{equation}
where $X_n:=\{n\} \times F_n$ for each $n$. 

Set $X:=\bigcup_n X_n$. Then $X\notin \mathrm{Exh}(\psi)$. Indeed
\begin{displaymath}
\begin{split}
\|X\|_\psi
&=\inf_{F \in \mathrm{Fin}}\sup_{n \in \omega}(\sum_{k \in S_n} a_k \varphi_k^{q_n}(X_{(k)}\setminus F))^{1/q_n}\\
&\ge \inf_{F \in \mathrm{Fin}}\sup_{n \in \omega} \min_{k \in S_n}\{\varphi_k(X_{(k)}\setminus F)\} \ge \frac{\inf_{k \in \omega}\| \omega\|_{\varphi_k}}{2}>0.
\end{split}
\end{displaymath}

It follows that $X \notin \mathrm{Exh}(\sup_n \mu_n)$, i.e., there exist $\varepsilon>0$ and an increasing sequence $(n_k)$ in $\omega$ such that $\mu_{n_k}(X)>\varepsilon$ for all $k \in \omega$. However, thanks to \eqref{eq:construction}, for each $k$ there exists a unique $m_k \in \omega$ such that $X\cap M_{n_k}\subseteq X_{m_k}$. Therefore 
$\mu_{n_k}(X_{m_k})>\varepsilon$ for all $k \in \omega$. 
Let $M$ be an infinite subset of $\{m_k: k \in \omega\}$ such that $|S_n \cap M|\le 1$ for all $n$. Define $Y:=\bigcup_{m \in M}X_m$ and note that $Y\notin \mathrm{Exh}(\sup_n \mu_n)$. 

Then necessarily $Y\notin \mathrm{Exh}(\psi)$. However, considering that there is at most one $k \in S_n$ such that $Y_{(k)}\neq \emptyset$, we obtain
\begin{displaymath}
%\begin{split}
\|Y\|_\psi
\le \inf_{F \in [\omega^2]^{<\omega}}\sup_{n \in \omega} \max_{k \in S_n}\{a_k^{1/q_n}\varphi_k((Y\setminus F)_{(k)})\} 
\le \sup_{n \in \omega}\varphi_n(\omega) \limsup_{t\to \infty}\max_{k \in S_t}\{a_k^{1/q_t}\} = 0,
\end{displaymath}
which is the wanted contradiction.
\end{proof}
With the same technique of Corollary \ref{cor:talldlideal}, we obtain (details are omitted):
\begin{cor}\label{cor:talldlidealefssdbsb}
Let $\varphi$ be a strongly-density-like lscsm on $\omega$ such that 
\begin{equation}\label{eq:solecky}
0<\|\omega\|_\varphi\le \varphi(\omega)<\infty. 
\end{equation} 
Moreover, let $\psi=\psi(\bm{\varphi},q,a,S)$ be a lscsm on $\omega^2$ as in \eqref{eq:definitiondlideal} and assume that $\bm{\varphi}$ is the constant sequence $(\varphi)$, $q$ is a bounded sequence, and $\lim_n a_n=0$. 
Then $\mathrm{Exh}(\psi)$ is not a generalized density ideal.
\end{cor}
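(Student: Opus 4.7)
The plan is to derive this corollary as a direct consequence of Theorem \ref{DL not generalized density}, in complete analogy with the proof of Corollary \ref{cor:talldlideal}. Concretely, I would verify that the two hypotheses of Theorem \ref{DL not generalized density} hold under the assumptions of the corollary.

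First, since $\bm{\varphi}$ is the constant sequence $(\varphi)$, we have $\|\omega\|_{\varphi_n}=\|\omega\|_\varphi$ and $\varphi_n(\omega)=\varphi(\omega)$ for every $n \in \omega$, so the double inequality
$$
0<\inf_{n \in \omega}\|\omega\|_{\varphi_n}\le \sup_{n \in \omega}\varphi_n(\omega)<\infty
$$
follows immediately from \eqref{eq:solecky}.

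The second hypothesis $\max\{a_k^{1/q_n}:k\in S_n\}\to 0$ requires a short argument. The key observation is that if $(S_n)\in \mathcal{F}_{\mathrm{disj}}$, then $\min S_n \to \infty$: indeed, for each fixed $N\in\omega$, only finitely many $S_n$ can meet $\{0,1,\ldots,N\}$ by pairwise disjointness, so eventually $\min S_n > N$. Combined with $\lim_n a_n=0$, this yields $\lim_n \max_{k\in S_n} a_k=0$. Setting $Q:=\sup_n q_n<\infty$ and using that $a_k\in [0,1]$ implies $a_k^{1/q_n}\le a_k^{1/Q}$, continuity of $x\mapsto x^{1/Q}$ at zero gives
$$
\lim_{n\to\infty}\max\{a_k^{1/q_n}:k\in S_n\}\le \lim_{n\to\infty}\max\{a_k^{1/Q}:k\in S_n\}=0.
$$

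Hence both hypotheses of Theorem \ref{DL not generalized density} are satisfied, and we conclude that $\mathrm{Exh}(\psi)$ is not a generalized density ideal. There is no real obstacle here: the only subtle point is the passage from $a_n\to 0$ to $\max_{k\in S_n}a_k\to 0$, which hinges on the elementary fact that pairwise disjoint finite sets in $\omega$ have mins tending to infinity; everything else is a direct application of the previous theorem.
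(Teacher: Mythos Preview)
Your proposal is correct and follows exactly the approach the paper intends: the paper's own proof simply reads ``With the same technique of Corollary \ref{cor:talldlideal}, we obtain (details are omitted)'', and you have faithfully supplied those details by verifying the two hypotheses of Theorem \ref{DL not generalized density}. Your observation that $\min S_n\to\infty$ (and hence $\max_{k\in S_n}a_k\to 0$) is the one point the paper leaves implicit in the proof of Corollary \ref{cor:talldlideal} as well, so you have in fact been slightly more careful than the original.
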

Note that condition \eqref{eq:solecky} has been already used in the literature, see e.g. \cite[Theorem 3.1]{MR1708146}. 

Putting all together, we have the following:
\begin{thm}\label{thm:sufficienttallexamples}
Let $\psi=\psi(\bm{\varphi}, q, a, S)$ be a lscsm on $\omega^2$ as in \eqref{eq:definitiondlideal} such that: 
\begin{enumerate}[label=\textup{(}\roman*\textup{)}]
\item \label{finaldesnitylike:item1} $\mathrm{Exh}(\varphi_n)$ is tall for each $n \in \omega$\textup{;} 
%\item $\sup_{n,k} \varphi_n(\{k\}) < \infty$
\item \label{finaldesnitylike:item2} $\lim_n \max\{a_k^{1/q_n}: k \in S_n\}=0$\textup{;} 
\item \label{finaldesnitylike:item3} $\inf_n c(\varphi_n)>0$\textup{;} 
\item \label{finaldesnitylike:item4} $0<\inf_{n}\| \omega\|_{\varphi_n} \le \sup_{n} \varphi_n(\omega) <\infty$\textup{.}
\end{enumerate}
Then $\mathrm{Exh}(\psi)$ is a tall density-like ideal which is not a generalized density ideal.
\end{thm}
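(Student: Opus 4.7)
The plan is to assemble the conclusion by appealing, in turn, to the three central results already proved in this section (Proposition \ref{proposition:DLidealtall}, Theorem \ref{DL-density-like}, and Theorem \ref{DL not generalized density}), each of which isolates one of the three properties we need. Since hypotheses \ref{finaldesnitylike:item1}--\ref{finaldesnitylike:item4} have been engineered so that they imply the hypotheses of each of these, the argument is essentially a verification that nothing is missing.

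First, I would establish tallness of $\mathrm{Exh}(\psi)$ by applying Proposition \ref{proposition:DLidealtall}. Two of its three hypotheses are immediate: tallness of $\mathrm{Exh}(\varphi_n)$ is exactly \ref{finaldesnitylike:item1}, and the condition $\max\{a_k^{1/q_n}:k\in S_n\}\to 0$ is exactly \ref{finaldesnitylike:item2}. The remaining hypothesis $M:=\sup_{n,k}\varphi_n(\{k\})<\infty$ is deduced from \ref{finaldesnitylike:item4}: by monotonicity of each $\varphi_n$ we have $\varphi_n(\{k\})\le \varphi_n(\omega)\le \sup_n \varphi_n(\omega)<\infty$ for all $n,k\in\omega$.

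Next, I would obtain the density-like property of $\psi$ (and hence of the ideal $\mathrm{Exh}(\psi)$, in view of Proposition \ref{equiv:submeasuresdensitylike}) directly from Theorem \ref{DL-density-like}, whose sole hypothesis $\inf_n c(\varphi_n)>0$ is exactly condition \ref{finaldesnitylike:item3}.

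Finally, to show that $\mathrm{Exh}(\psi)$ is not a generalized density ideal, I would invoke Theorem \ref{DL not generalized density}: its hypothesis $0<\inf_n\|\omega\|_{\varphi_n}\le \sup_n\varphi_n(\omega)<\infty$ is exactly \ref{finaldesnitylike:item4}, and the condition $\max\{a_k^{1/q_n}:k\in S_n\}\to 0$ is again \ref{finaldesnitylike:item2}. Combining these three steps yields the desired statement. I do not anticipate a real obstacle: the substantive work has been carried out in the earlier results, and the role of Theorem \ref{thm:sufficienttallexamples} is essentially packaging. The only minor care required is the monotonicity observation used to pass from \ref{finaldesnitylike:item4} to the uniform bound on $\varphi_n(\{k\})$ needed by Proposition \ref{proposition:DLidealtall}.
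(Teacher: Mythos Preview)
Your proposal is correct and matches the paper's proof essentially verbatim: the paper derives $\sup_{n,k}\varphi_n(\{k\})\le \sup_n \varphi_n(\omega)<\infty$ from \ref{finaldesnitylike:item4} and then invokes Proposition \ref{proposition:DLidealtall}, Theorem \ref{DL-density-like}, and Theorem \ref{DL not generalized density} in exactly the way you describe.
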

\begin{proof}
Thanks to \ref{finaldesnitylike:item4}, we have $\sup_{n,k}\varphi_n(\{k\})\le \sup_n \varphi_n(\omega)<\infty$. The conclusion follows by Proposition \ref{proposition:DLidealtall}, Theorem \ref{DL-density-like}, and Theorem \ref{DL not generalized density}.
\end{proof}

In the case where $\bm{\varphi}$ is a constant sequence and $q$ is bounded, we can simplify the above conditions:
\begin{cor}\label{cor:finalcortalldensitylike}
Let $\varphi$ be a strongly-density-like lscsm on $\omega$ such that %$\mathrm{Exh}(\varphi)$ is tall and 
%\begin{equation}\label{eq:solecky}
%0<\|\omega\|_\varphi\le \varphi(\omega)<\infty. 
%\end{equation} 
%$0<\|\omega\|_\varphi\le \varphi(\omega)<\infty$ 
$\mathrm{Exh}(\varphi)$ is tall and 
satisfies \eqref{eq:solecky}. 
%$0<\|\omega\|_\varphi\le \varphi(\omega)<\infty$. 
Moreover, let $\psi=\psi(\bm{\varphi}, q, a, S)$ be a lscsm on $\omega^2$ as in \eqref{eq:definitiondlideal} such that $\bm{\varphi}$ is the constant sequence $(\varphi)$, $q$ is bounded, and $\lim_n a_n=0$. 

Then $\mathrm{Exh}(\psi)$ is a tall density-like ideal which is not a generalized density ideal.
\end{cor}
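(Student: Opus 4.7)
The plan is to derive this corollary as a direct specialization of Theorem \ref{thm:sufficienttallexamples}, by verifying its four hypotheses \ref{finaldesnitylike:item1}--\ref{finaldesnitylike:item4} for the particular $\psi=\psi(\bm{\varphi},q,a,S)$ described in the statement. Since $\bm{\varphi}$ is the constant sequence $(\varphi)$, three of the four conditions will be immediate: hypothesis \ref{finaldesnitylike:item1} because $\mathrm{Exh}(\varphi_n)=\mathrm{Exh}(\varphi)$ is tall by assumption; hypothesis \ref{finaldesnitylike:item3} because $\inf_n c(\varphi_n)=c(\varphi)>0$ since $\varphi$ is strongly-density-like; and hypothesis \ref{finaldesnitylike:item4} because $\|\omega\|_{\varphi_n}=\|\omega\|_\varphi$ and $\varphi_n(\omega)=\varphi(\omega)$, so the chain $0<\inf_n\|\omega\|_{\varphi_n}\le \sup_n\varphi_n(\omega)<\infty$ is simply a restatement of \eqref{eq:solecky}.

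The only item that requires a brief argument is hypothesis \ref{finaldesnitylike:item2}, namely $\lim_n \max\{a_k^{1/q_n}: k\in S_n\}=0$. First I would note that since $(S_n)\in\mathcal{F}_{\mathrm{disj}}$ consists of pairwise disjoint finite subsets of $\omega$, for any fixed $K$ only finitely many $S_n$ can meet $[0,K]$; hence $\min S_n\to\infty$. Together with $\lim_n a_n=0$ this yields $\max\{a_k: k\in S_n\}\to 0$. Then, letting $Q:=\sup_n q_n<\infty$ (which exists since $q$ is assumed bounded) and using that $a_k\in[0,1]$ and $1/q_n\ge 1/Q$, one has $a_k^{1/q_n}\le a_k^{1/Q}$ for each $k\in S_n$. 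By continuity of $x\mapsto x^{1/Q}$ at $0$, it follows that
\[
\max\{a_k^{1/q_n}: k\in S_n\}\le \bigl(\max\{a_k: k\in S_n\}\bigr)^{1/Q}\longrightarrow 0,
\]
giving \ref{finaldesnitylike:item2}.

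With all four hypotheses in place, Theorem \ref{thm:sufficienttallexamples} applies and delivers the conclusion: $\mathrm{Exh}(\psi)$ is a tall density-like ideal which is not a generalized density ideal. The argument has essentially no obstacle; the only subtlety is the finite-disjointness trick that forces $\min S_n\to\infty$ so that the vanishing of $a_n$ transfers to a vanishing of $\max_{k\in S_n}a_k^{1/q_n}$. This matches the author's remark that the details of the proof are omitted as the technique is identical to that of Corollary \ref{cor:talldlideal}.
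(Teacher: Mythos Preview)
Your proof is correct and essentially the same as the paper's: the paper cites Corollaries \ref{cor:samevarphinimpliesdensitylike}, \ref{cor:talldlideal}, and \ref{cor:talldlidealefssdbsb}, which are just the constant-$\bm{\varphi}$ specializations of the three ingredients bundled in Theorem \ref{thm:sufficienttallexamples}, so your route through Theorem \ref{thm:sufficienttallexamples} is only a repackaging. Your explicit justification that $\min S_n\to\infty$ (needed to pass from $a_n\to 0$ to $\max_{k\in S_n}a_k\to 0$) is a detail the paper leaves implicit in Corollary \ref{cor:talldlideal}.
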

\begin{proof}
It follows by 
Corollary \ref{cor:samevarphinimpliesdensitylike}, 
Corollary \ref{cor:talldlideal}, 
and Corollary \ref{cor:talldlidealefssdbsb}. 
%Theorem \ref{DL not generalized density}.
\end{proof}

%Note that condition \eqref{eq:solecky} has been already used in the literature, see e.g. \cite[Theorem 3.1]{MR1708146}. 
%
Thus, we answer Question \ref{sec:tallquestionoriiginal}, giving an alternative proof of Theorem \ref{thm::main}. 
\begin{thm}\label{thm:existencetall}
There exists a tall density-like ideal which is not a generalized density ideal.
\end{thm}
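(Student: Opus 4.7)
The plan is to apply Corollary \ref{cor:finalcortalldensitylike} to the explicit construction sketched in Example \ref{example:answerQuestion3}. All the heavy lifting has already been packaged: every hypothesis of that corollary (strong-density-likeness of $\varphi$, tallness of $\mathrm{Exh}(\varphi)$, the two-sided bound \eqref{eq:solecky}, constancy of $\bm{\varphi}$, boundedness of $q$, and the decay $\lim_n a_n = 0$) needs only to be checked for a concrete choice of data.

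First I would fix any strictly increasing sequence $(\iota_n)$ with $\iota_0 = 0$ and $|I_n| \to \infty$, where $I_n := [\iota_n, \iota_{n+1})$, and set
\[
\varphi(A) := \sup_{m \in \omega} \frac{|A \cap I_m|}{|I_m|}.
\]
Since $\varphi = \sup_m \mu_m$, where each $\mu_m(A) := |A\cap I_m|/|I_m|$ is a probability measure on the finite set $I_m$ and the supports $(I_m)$ are pairwise disjoint, Remark \ref{rem-strongly-density-like} gives that $\varphi$ is strongly-density-like. A direct calculation shows $\|\omega\|_\varphi = 1 = \varphi(\omega)$, so \eqref{eq:solecky} holds. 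Moreover, $\mathrm{Exh}(\varphi)$ is tall: given any infinite $A\subseteq \omega$, picking at most one element from each $I_n$ produces an infinite subset on which $\varphi$ vanishes after removing any finite set.

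Next I would take $\bm{\varphi}$ to be the constant sequence $(\varphi)$, $q$ to be the constant sequence $(1)$, $S_n := I_n$, and $a_k := 1/|I_n|$ for $k \in S_n$. Then $\sum_{k \in S_n} a_k = 1$ for every $n$, and $\lim_k a_k = 0$ because the unique $n$ with $k \in I_n$ tends to infinity and $|I_n|\to\infty$. All hypotheses of Corollary \ref{cor:finalcortalldensitylike} are then satisfied, so $\mathrm{Exh}(\psi)$, with $\psi=\psi(\bm{\varphi},q,a,S)$ defined as in \eqref{eq:definitiondlideal}, is a tall density-like ideal which is not a generalized density ideal.

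There is essentially no obstacle to this argument, since the machinery of DL-ideals in Section \ref{sec:tall} was designed for precisely this application; the genuine content lies upstream in Theorem \ref{DL-density-like} (density-likeness of $\psi$), Proposition \ref{proposition:DLidealtall} (tallness of $\mathrm{Exh}(\psi)$), and Theorem \ref{DL not generalized density} (failure of $\mathrm{Exh}(\psi)$ to be a generalized density ideal). The only work at this final stage is the routine verification of the hypotheses of Corollary \ref{cor:finalcortalldensitylike} for the concrete data above.
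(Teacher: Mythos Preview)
Your proposal is correct and follows essentially the same approach as the paper: both arguments produce a strongly-density-like lscsm $\varphi=\sup_m \mu_m$ built from probability measures on pairwise disjoint finite intervals, verify tallness of $\mathrm{Exh}(\varphi)$ together with $\|\omega\|_\varphi=\varphi(\omega)=1$, and then invoke Corollary~\ref{cor:finalcortalldensitylike}. The only cosmetic difference is that the paper takes $\varphi$ to be a submeasure witnessing $\mathcal{I}_d=\mathrm{Exh}(\varphi)$ (appealing to Farah's representation of the density-zero ideal), whereas you write down the interval construction of Example~\ref{example:answerQuestion3} directly; your route is slightly more self-contained but otherwise identical in spirit.
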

\begin{proof}
Let $\mathcal{I}_d$ be the ideal of density zero sets, which is a tall ideal. Thanks to \cite[Example 1.2.3.(d), Theorem 1.13.3(a), and Lemma 1.13.9.(Z3)]{MR1711328}, there exists a sequence $(\mu_n)$ of probability measures with finite pairwise disjoint supports such that $\mathcal{I}_d=\mathrm{Exh}(\varphi)$, where $\varphi:=\sup_n \mu_n$. In particular, $\varphi(\omega)=\|\omega\|_\varphi=1$. The claim follows by Corollary \ref{cor:finalcortalldensitylike}.
%
%\medskip
%
%
%Thanks to Corollary \ref{cor:finalcortalldensitylike}, it is sufficient to show that there exists a strongly-density-like $\varphi$ on $\omega$ such that $\mathrm{Exh}(\varphi)$ is tall and satisfies \eqref{eq:solecky}. To this aim, it is enough to check that $\varphi(A):=\sup_n \frac{1}{n}|A\cap n|$ does the job.
%
%\medskip
%
%
%
%\textcolor{red}{[Change]}
%Let $\I$ be the ideal of density zero sets $\{A\subseteq \omega: |A\cap n|/n\to 0\}$, which is a tall generalized density ideal. In particular, by Remark \ref{rem-strongly-density-like}, there is a strongly-density-like lscsm $\mu$ with $\I=\Exh(\mu)$. By \cite[Example 1.2.3.(d)]{MR1711328} we know that $\mu$ satisfies $\mu(\omega)<+\infty$ (see also \cite[Lemma 1.13.9.(Z3)]{MR1711328}).
%
%Fix a sequence $(I_n)$ of pairwise disjoint finite subsets of $\omega$ such that $\lim_n |I_n|=+\infty$. Let $a_i=\frac{1}{|I_n|}$, where $n$ is such that $i\in I_n$. Then $\sum_{i\in I_n}a_i=1$, for all $n\in\omega$, and $\lim_i a_i=0$ (as $\lim_n |I_n|=+\infty$). 
%
%Consider the ideal $\mathrm{DL}(\mu,(a_i),(I_n))$. By Theorems \ref{DL-density-like} and \ref{DL not generalized density} we get that $\mathrm{DL}(\mu,(a_i),(I_n))$ is a density-like ideal which is not a generalized density ideal. Moreover, by Theorem \ref{DL:tall}, $\mathrm{DL}(\mu,(a_i),(I_n))$ is tall.
\end{proof}

With the same spirit of Question \ref{q:questiondensitylikejhsjshjdf}, we ask:
\begin{question}\label{q:infinitelymanynonisomtallexmaples}
How many pairwise nonisomorphic tall density-like ideals which are not generalized density ideals are there?
\end{question}

We will show that, as in Theorem \ref{cor:2omega}, there is a family of $2^\omega$ such ideals. To this aim, we need a preliminary lemma.
\begin{lem}\label{lem:disjoinmad}
There exists a family 
$\mathscr{A}$ 
%$\mathscr{M}=\{M_\alpha: \alpha < 2^\omega\}$ 
of $2^\omega$ subsets of $\omega^2$ such that 
\begin{equation}\label{eq:madalmost}
\forall (A,A^\prime) \in [\mathscr{A}]^2, \forall k \in \omega,\quad A_{(k)} \notin \mathrm{Fin} 
\,\,\,\text{ and }\,\,\,
A \cap A^\prime \in \mathrm{Fin}.
\end{equation}
\end{lem}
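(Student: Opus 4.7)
The plan is to lift a classical almost disjoint family on $\omega$ to $\omega^2$ in such a way that each resulting set has every column infinite, while almost disjointness is preserved. The ``trick'' is to truncate each column below the index of its entry, so that a finite overlap upstairs can only generate finitely many pairs downstairs.

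First, I would invoke the standard fact that there exists an almost disjoint family $\{B_\alpha : \alpha < 2^\omega\}$ of infinite subsets of $\omega$ of size continuum (for instance, by identifying $\omega$ with $2^{<\omega}$ and letting $B_\alpha$ be the set of finite initial segments of the $\alpha$-th branch of $2^\omega$, so that $B_\alpha \cap B_\beta$ is finite whenever $\alpha \ne \beta$).

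Then, for each $\alpha < 2^\omega$, I would define
$$
A_\alpha := \{(k,b) \in \omega^2 : b \in B_\alpha \text{ and } b \ge k\},
$$
and set $\mathscr{A} := \{A_\alpha : \alpha < 2^\omega\}$. The verifications are immediate. For any $k \in \omega$, the $k$-th column $(A_\alpha)_{(k)} = B_\alpha \cap [k,\infty)$ is cofinite in $B_\alpha$, hence infinite, giving $A_{(k)} \notin \mathrm{Fin}$. For distinct $\alpha, \beta < 2^\omega$,
$$
A_\alpha \cap A_\beta \subseteq \{(k,b) \in \omega^2 : b \in B_\alpha \cap B_\beta \text{ and } k \le b\},
$$
which has cardinality at most $\sum_{b \in B_\alpha \cap B_\beta}(b+1)$; since $B_\alpha \cap B_\beta$ is finite by almost disjointness of the $B_\alpha$'s, this sum is finite and $A_\alpha \cap A_\beta \in \mathrm{Fin}$. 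Finally, because $B_\alpha \setminus B_\beta$ is infinite for distinct $\alpha, \beta$ (both are infinite with finite intersection), the map $\alpha \mapsto A_\alpha$ is injective, so $|\mathscr{A}| = 2^\omega$.

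There is no real obstacle here: the only insight is the staircase cutoff $b \ge k$, which simultaneously keeps each column cofinite in $B_\alpha$ (hence infinite) and forces any finite upstairs intersection to produce only finitely many pairs downstairs.
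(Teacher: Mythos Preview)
Your proof is correct and essentially identical to the paper's own argument: the paper likewise starts from a size-$2^\omega$ almost disjoint family $\mathscr{B}$ on $\omega$ and defines the sets $\bigcup_{n\in\omega}\{n\}\times(B\setminus n)$ for $B\in\mathscr{B}$, which is exactly your $A_\alpha=\{(k,b):b\in B_\alpha,\ b\ge k\}$. You have simply spelled out the verifications that the paper leaves to the reader.
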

\begin{proof}
It is known that there exists a family $\mathscr{B}$ of $2^\omega$ subsets of $\omega$ such that
$$
\forall (B,B^\prime) \in [\mathscr{B}]^2, \quad B \notin \mathrm{Fin} 
\,\,\,\text{ and }\,\,\,
B \cap B^\prime \in \mathrm{Fin},
$$
see e.g. \cite[Lemma 2.5.3]{MR3526021}. 
Then, it is sufficient to see that $\{\,\bigcup_{n\in \omega} \{n\}\times (B\setminus n): B \in \mathscr{B}\}$ satisfies \eqref{eq:madalmost}. 
\end{proof}

\begin{thm}\label{thm:mainsecondparttall}
There are $2^\omega$ nonpathological and pairwise nonisomorphic tall density-like ideals which are not generalized density ideals.
\end{thm}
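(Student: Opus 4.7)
The plan is to index the desired $2^\omega$ ideals by the almost disjoint family $\mathscr{A}$ from Lemma \ref{lem:disjoinmad}, producing for each $A\in\mathscr{A}$ a DL-ideal $\mathcal{I}_A$ whose defining data depends on $A$ in such a way that the ideal inherits tallness, density-likeness, nonpathology, and failure to be a generalized density ideal from the DL-framework of Section \ref{sec:tall}, while the slicewise structure of $A$ distinguishes the $\mathcal{I}_A$'s up to isomorphism. Concretely, fix a strongly-density-like nonpathological lscsm $\varphi$ on $\omega$ with $\mathrm{Exh}(\varphi)$ tall and satisfying \eqref{eq:solecky} (e.g., the $\varphi=\sup_n\mu_n$ arising from $\mathcal{I}_d$ as in the proof of Theorem \ref{thm:existencetall}), and a partition $(I_n)\in\mathcal{F}_{\mathrm{int}}$ of $\omega$ with $|I_n|\to\infty$. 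For each $A\in\mathscr{A}$, set $\varphi^A_k(X):=\varphi(X\cap A_{(k)})$ for $X\subseteq\omega$ and
$$
\psi_A(B):=\sup_{n\in\omega}\frac{1}{|I_n|}\sum_{k\in I_n}\varphi^A_k(B_{(k)})\qquad(B\subseteq\omega^2),
$$
a lscsm of the form \eqref{eq:definitiondlideal} with constant exponent $q=(1)$, weights $a_k=1/|I_n|$ for $k\in I_n$, and supports $(I_n)$. Set $\mathcal{I}_A:=\mathrm{Exh}(\psi_A)$, viewed as an ideal on $\omega$ via $h$.

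The first step is to verify that $\mathcal{I}_A$ has the four required properties. Each $\varphi^A_k$ is strongly-density-like with the same witnessing constant as $\varphi$, and is nonpathological because $\varphi$ is (the measures $\eta\mapsto\eta(\,\cdot\,\cap A_{(k)})$ with $\eta\in\mathcal{N}(\varphi)$ witness this, exactly as in Remark \ref{rmk:nonpathological}). After a preliminary refinement of $\mathscr{A}$ that preserves cardinality $2^\omega$ and almost-disjointness while guaranteeing the slicewise bounds $0<\inf_k\|A_{(k)}\|_\varphi\leq\sup_k\varphi(A_{(k)})<\infty$ for every $A\in\mathscr{A}$, each $\mathrm{Exh}(\varphi^A_k)$ is tall: an infinite $B\subseteq\omega$ either meets $A_{(k)}$ infinitely often, in which case tallness of $\mathrm{Exh}(\varphi)$ inside $A_{(k)}$ supplies an infinite subset in $\mathrm{Exh}(\varphi^A_k)\cap\mathcal{P}(B\cap A_{(k)})$, or is almost disjoint from $A_{(k)}$ and hence already lies in $\mathrm{Exh}(\varphi^A_k)$. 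Density-likeness of $\mathcal{I}_A$ then follows from Theorem \ref{DL-density-like}, tallness from Proposition \ref{proposition:DLidealtall} (with $a_k\to 0$), failure to be a generalized density ideal from Theorem \ref{DL not generalized density}, and nonpathology from Remark \ref{rmk:nonpathologicaltall}.

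The main obstacle is pairwise nonisomorphism. The core observation is that $A$ is canonically recoverable from $\mathcal{I}_A$ modulo $\mathrm{Exh}(\psi)$: $A\notin\mathcal{I}_A$ (by the uniform slicewise lower bound), $\omega^2\setminus A\in\mathcal{I}_A$ (as $\psi_A$ vanishes on $\omega^2\setminus A$), and $A'\in\mathcal{I}_A$ for every $A'\in\mathscr{A}\setminus\{A\}$ (since $A\cap A'\in\mathrm{Fin}\subseteq\mathrm{Exh}(\psi)$). Supposing $f\colon\omega^2\to\omega^2$ is a bijection witnessing $\mathcal{I}_A\cong\mathcal{I}_{A'}$, translating membership yields $A'\setminus f[A]\in\mathrm{Exh}(\psi)$ and $f[A]\cap A'\notin\mathrm{Exh}(\psi)$, and symmetrically for $f^{-1}$, forcing $f[A]$ and $A'$ to agree up to $\mathrm{Exh}(\psi)$-negligibility. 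The technical heart of the argument is to exploit this compatibility, together with the slicewise infinitude of every member of $\mathscr{A}$ guaranteed by Lemma \ref{lem:disjoinmad} (which blocks $f$ from acting as a trivial rearrangement along first coordinates), to derive a contradiction with $A\cap A'\in\mathrm{Fin}$ via a careful analysis of how $f$ must transport the $\varphi$-mass of $A$ onto $A'$. Identifying precisely which $\psi$-structural invariant of the slices $A_{(k)}$ is preserved by $f$ and is incompatible with $A\cap A'\in\mathrm{Fin}$ is the crux of the proof.
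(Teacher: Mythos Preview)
Your setup for obtaining the four properties (tall, density-like, nonpathological, not a generalized density ideal) follows the DL-framework of Section~\ref{sec:tall} much as the paper does, though with a different choice of building blocks: you restrict a \emph{single} fixed lscsm $\varphi$ to the slices $A_{(k)}$, whereas the paper defines $\varphi_{A,k}:=\sup_{t\in A_{(k)}}\mu_{(k,t)}$ using uniform probability measures on a carefully chosen partition $\{M_z:z\in\omega^2\}$ of $\omega^2$ into finite blocks satisfying the growth condition \eqref{eq:assumptioncardinality}. That difference is not cosmetic; it is exactly what makes the nonisomorphism argument go through.

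The genuine gap is the nonisomorphism step, which you explicitly leave open (``Identifying precisely which $\psi$-structural invariant \ldots\ is the crux of the proof''). Your observation that $\omega^2\setminus A\in\mathcal I_A$ and $A'\in\mathcal I_A$ for $A'\neq A$ only yields that any isomorphism $f$ must satisfy $f[A]\bigtriangleup A'\in\mathcal I_{A'}$; it does \emph{not} distinguish the ideals, since $\mathcal I_A\!\upharpoonright\! A$ and $\mathcal I_{A'}\!\upharpoonright\! A'$ may very well be isomorphic. Indeed, in your construction $\varphi^A_k(X)=\varphi(X\cap A_{(k)})$ depends only on how $A_{(k)}$ sits inside $\omega$ relative to $\varphi$, and for a homogeneous $\varphi$ (such as the one coming from $\mathcal I_d$) there is no obstruction to choosing almost disjoint $B,B'$ whose restricted submeasures are equivalent; the almost-disjointness of $\mathscr A$ plays no role at this level. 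The paper avoids this entirely: the growth condition \eqref{eq:assumptioncardinality} lets one build, for distinct $A,A'\in\mathscr A$, a set $F=\bigcup_n F_n$ with $F_n\subseteq M_{x_n}$ (where the $x_n$ enumerate $A\setminus A'$) chosen so that $\|F\|_{\varphi_{A,k}}=\tfrac12$ for all $k$ while $\mu_{(i,j)}((f^{-1}[F])_{(i)})\le 1/(h(i,j)+1)$ for all $(i,j)\in A'\setminus A$. The quantitative separation of block sizes is what forces $f^{-1}[F]\in\mathrm{Exh}(\psi_{A'})$ despite $F\notin\mathrm{Exh}(\psi_A)$; nothing in your setup provides a substitute for this.

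A secondary issue: the ``preliminary refinement'' guaranteeing $\inf_k\|A_{(k)}\|_\varphi>0$ amounts to finding an almost disjoint family of size $2^\omega$ entirely inside $\mathrm{Exh}(\varphi)^+$, which is not automatic and deserves an argument (the standard branch family in $2^{<\omega}$, for instance, lies in $\mathcal I_d$).
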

\begin{proof}
% and fix a sequence $(m_n)$ of positive integers such that $m_{n+1}\ge (n+2)\sum_{i\le n}a_i$ for all $n\in \omega$. 
Let $h: \omega^2 \to \omega$ be the bijection defined in \eqref{eq:bijectionh}. 
Let also $\mathscr{M}=\{M_z: z \in \omega^2\}$ be a partition of $\omega^2$ into nonempty finite sets such that $M_{(n,m)}\subseteq \{n\}\times \omega$ for all $n,m\in\omega$ and
%$|M_n|=m_n$ for all $n \in \omega$. 
%$m_{n+1}\ge (n+2)\sum_{i\le n}m_i$ for all $n\in \omega$, 
\begin{equation}\label{eq:assumptioncardinality}
%\textstyle 
\forall n \in \omega, \quad m_{h^{-1}(n+1)}\ge (n+2)\sum_{i\le n}m_{h^{-1}(i)}, 
\end{equation}
where $m_z:=|M_z|$. 
%Let also $h: \omega^2 \to \omega$ be the bijection defined in \eqref{eq:bijectionh}.
Moreover, for each $(n,m) \in \omega^2$, let $\mu_{(n,m)}$ be the uniform probability measure given by $\mu_{(n,m)}(X)= |(\{n\}\times X)\cap M_{(n,m)}|/m_{(n,m)}$ for all $X\subseteq \omega$. 
%$$
%\forall X\subseteq \omega^2,\quad \mu_z(X)=\frac{1}{m_{h(z)}}\, |X\cap M_{h(z)}|.
%%\frac{|C \cap M_{h(z)}|}{m_{h(z)}}
%$$

Fix $(S_n) \in \mathcal{F}_{\mathrm{incr}}$ such that $\lim_n |S_n|=\infty$ and let $\mathscr{A}$ be a family of $2^\omega$ subsets of $\omega^2$ which satisfies \eqref{eq:madalmost} (existing by Lemma \ref{lem:disjoinmad}). For each $A \in \mathscr{A}$, let $\psi_A$ be the lscsm on $\omega^2$ defined by
$$%\begin{equation}\label{eq:psiA}
%\forall X\subseteq \omega^2,\quad 
\psi_A:=\sup_{n\in \omega} \sum_{k \in S_n}\frac{1}{|S_n|}\,\varphi_{A,k},
\,\,\,\text{ where }\,\,\,\varphi_{A,k}:=\sup_{t \in A_{(k)}} \mu_{(k,t)}
$$%\end{equation}
for all $k \in \omega$. 
%where
%$$
%\forall k \in \omega, \quad \psi_{A,k}:=\sup_{t \in A_{(k)}} \mu_{(k,t)}.
%$$
%Note that each $\varphi_{A,k}$ is a density ideal, hence strongly-density-like by Remark \ref{rem-strongly-density-like}. 
It follows that, for each $A \in \mathscr{A}$, the lscsm $\psi_A$ is of the type \eqref{eq:definitiondlideal}, where $q_n=1$ for all $n$ and $a_k=1/|S_i|$ whenever $k \in S_i$; hence $\lim_n a_n=0$. In addition, for each $A \in \mathscr{A}$ and $k \in \omega$, the ideal $\mathrm{Exh}(\varphi_{A,k})$ is tall and $\|\omega\|_{\varphi_{A,k}}=\varphi_{A,k}(\omega)=1$. Lastly, thanks to Remark \ref{rem-strongly-density-like}, each $\varphi_{A,k}$ is strongly-density-like with any witnessing constant $c(\varphi_{A,k})<1$. In particular, $\inf_{A,k}c(\varphi_{A,k})\ge \nicefrac{1}{2} >0$. Therefore, by Theorem \ref{thm:sufficienttallexamples}, each $\mathrm{Exh}(\psi_A)$ is a tall density-like ideal which is not a generalized density ideal. 
Also, by Remark \ref{rmk:nonpathologicaltall}, each $\mathrm{Exh}(\psi_A)$ is nonpathological.

At this point, we claim that, for all distinct $A,A^\prime \in \mathscr{A}$, the ideals $\mathrm{Exh}(\psi_A)$ and $\mathrm{Exh}(\psi_{A^\prime})$ are not isomorphic. To this aim, fix distinct $A,A^\prime \in \mathscr{A}$ and suppose for the sake of contradiction that $\mathrm{Exh}(\psi_A)$ and $\mathrm{Exh}(\psi_{A^\prime})$ are isomorphic, witnessed by a bijection $f:\omega^2 \to \omega^2$. Let $(x_n)$ be the enumeration of the infinite set $A\setminus A^\prime$ such that the sequence $(h(x_n))$ is increasing. Then, pick a sequence $(F_n) \in \mathcal{F}_{\mathrm{disj}}$ such that
$$
\forall n \in \omega,\quad  
F_n\subseteq M_{x_n}, 
\,\,\, |F_n|=\left\lfloor \frac{m_{x_n}}{2}\right\rfloor, 
\,\,\,\text{ and }\,\,\,
F_{n+1}\cap \bigcup_{\substack{z \in \omega^2: \\ h(z)<h(x_{n+1})}}f[M_z]=\emptyset.
$$
Note that this is really possible: indeed, letting $U$ be the latter union, it follows by \eqref{eq:assumptioncardinality} that 
$$
%\textstyle
%\left|\bigcup_{\substack{z \in \omega^2: \\ h(z)<h(x_{n+1})}}f[M_z]\right|=
%\sum_{\substack{z \in \omega^2: \\ h(z)<h(x_{n+1})}}m_z=
%\sum_{\substack{i \in \omega: \\ i\le h(x_{n+1})-1}}m_{h^{-1}(i)}
%\le \frac{1}{2}\,m_{x_{n+1}}.
|U|=
\sum_{h(z)<h(x_{n+1})}m_z=
\sum_{i\le h(x_{n+1})-1}m_{h^{-1}(i)}
\le \frac{1}{2}\,m_{x_{n+1}}.
$$

Set $F:=\bigcup_n F_n$. It follows by construction that $\|F\|_{\varphi_{A,k}}=\nicefrac{1}{2}$ for all $k \in \omega$, hence $F \notin \mathrm{Exh}(\psi_A)$. On the other hand, we obtain by \eqref{eq:assumptioncardinality} that
\begin{displaymath}
\begin{split}
\forall (i,j) \in A^\prime \setminus A,\quad \mu_{(i,j)}((f^{-1}[F])_{(i)})
&=\frac{|F\cap f[M_{(i,j)}]|}{m_{(i,j)}} \\
&\le \frac{\sum_{n\in\{k:\ h(x_k)<h((i,j))\}}|F_n|}{m_{(i,j)}} \\
&\le \frac{\sum_{k\le h((i,j))-1}m_{h^{-1}(k)}}{m_{h^{-1}(h((i,j)))}} \le \frac{1}{h((i,j))+1},
\end{split}
\end{displaymath}
which implies that $f^{-1}[F] \in \mathrm{Exh}(\psi_{A^\prime})$. This contradiction concludes the proof.
\end{proof}

\section{Characterization of Generalized Density Ideals}\label{sec:conversegeneralizeddensitylike}

%\textcolor{red}{[When density-like lscsm defines generazalized density ideals?]}

In this section, we provide a characterization of generalized density ideals which resembles the one of density-like ideal given in Definition \ref{def:densitylike}. This provides sufficient conditions for a density-like ideal to be necessarily a generalized density ideal, 

%This section is an attempt to characterize which density-like lscsms define a generalized density ideal.

Let $\mathscr{H}$ be the set of strictly increasing sequences in $\omega$. Then, given a lscsm $\varphi$ and a real $\varepsilon>0$, define
$$
%\forall s \in \mathscr{S}, \forall F \in \mathcal{F}_{\mathrm{incr}} \cap \mathcal{G}_{\varphi, \varepsilon}, \quad 
K_{s, F}:=\{(k_n) \in \mathscr{H}: \forall n\in \omega, \exists m \in \omega, \max F_{k_n}\le s_m < \min F_{k_{n+1}}\}
$$ 
for all $s=(s_n) \in \mathscr{H}$ and $F=(F_n) \in \mathcal{F}_{\mathrm{disj}} \cap \mathcal{G}_{\varphi, \varepsilon}$ (note that $K_{s, F}\neq \emptyset$). 

\begin{defi}\label{def:dweak}
A lscsm $\varphi$ on $\omega$ satisfies \textbf{condition} $\bm{\Dweak}$ if for all $\varepsilon>0$ there exist $\delta>0$ and a sequence $s \in \mathscr{H}$ such that, if $F=(F_n) \in\mathcal{F}_{\mathrm{incr}} \cap \mathcal{G}_{\varphi,\delta}$ and $k\in K_{s, F}$, then $\varphi(\bigcup_{n} F_{k_n})<\varepsilon$.
\end{defi}

If the sequence $s \in \mathscr{H}$ can be chosen uniformly in $\varepsilon>0$, we have the following:
\begin{defi}\label{def:dweak}
A lscsm $\varphi$ on $\omega$ satisfies \textbf{condition} $\bm{\Dstrong}$ if there exists a sequence $s \in \mathscr{H}$ for which for all $\varepsilon>0$ there exists $\delta>0$ such that, if $F=(F_n) \in\mathcal{F}_{\mathrm{incr}} \cap \mathcal{G}_{\varphi,\delta}$ and $k\in K_{s, F}$, then $\varphi(\bigcup_{n} F_{k_n})<\varepsilon$.
\end{defi}

It is clear that every lscsm $\varphi$ satisfying condition $\Dstrong$ satisfies also condition $\Dweak$. 
%
%With these premises, 
Thus, 
we state the main result of this section.
\begin{thm}\label{char}
Let $\mathcal{I}$ be an ideal. Then the following are equivalent:
\begin{enumerate}[label={\rm (\textsc{A}\arabic{*})}]
\item \label{item:abc1} $\I$ is a generalized density ideal\textup{;}
\item \label{item:abc2} every lscsm $\varphi$ such that $\I=\Exh(\varphi)$ satisfies condition $\Dstrong$\textup{;}
\item \label{item:abc3} every lscsm $\varphi$ such that $\I=\Exh(\varphi)$ satisfies condition $\Dweak$\textup{;}
\item \label{item:abc4} $\I=\Exh(\varphi)$ for some lscsm $\varphi$ satisfying condition $\Dstrong$\textup{;}
\item \label{item:abc5} $\I=\Exh(\varphi)$ for some lscsm $\varphi$ satisfying condition $\Dweak$\textup{.}
\end{enumerate}
\end{thm}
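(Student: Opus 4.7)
The plan is to reduce the five-way equivalence to two substantive implications, \ref{item:abc1} $\Rightarrow$ \ref{item:abc2} and \ref{item:abc5} $\Rightarrow$ \ref{item:abc1}. The remaining arrows \ref{item:abc2} $\Rightarrow$ \ref{item:abc3}, \ref{item:abc2} $\Rightarrow$ \ref{item:abc4}, \ref{item:abc3} $\Rightarrow$ \ref{item:abc5} and \ref{item:abc4} $\Rightarrow$ \ref{item:abc5} are immediate from $\Dstrong\Rightarrow \Dweak$ and from ``every lscsm'' trivially implying ``some lscsm''.

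For \ref{item:abc1} $\Rightarrow$ \ref{item:abc2}, Proposition \ref{prop:supportgeneralizeddensityideal} lets me write $\mathcal{I}=\Exh(\varphi_{\bm{\mu}})$ with $(S_n):=(\mathrm{supp}(\mu_n))\in\mathcal{F}_{\mathrm{int}}$. Take $s:=(\max S_n)\in\mathscr{H}$. A short combinatorial check shows that $\varphi_{\bm{\mu}}$ itself already satisfies $\Dstrong$ with this $s$: the $K_{s,F}$-condition forces each block $S_j$ to meet at most one of the sets $F_{k_n}$, so that $\varphi_{\bm{\mu}}(\bigcup_n F_{k_n})=\sup_j\mu_j(\bigcup_n F_{k_n})\le\sup_n\varphi_{\bm{\mu}}(F_{k_n})$. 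To obtain $\Dstrong$ for an arbitrary $\varphi$ with $\Exh(\varphi)=\mathcal{I}$, I would use that $d_\varphi$ and $d_{\varphi_{\bm{\mu}}}$ induce the same Polish topology on $\mathcal{I}$ (recalled in Section \ref{sec:intro}), so the identity map between these Polish groups is uniformly continuous in both directions and translates $\varphi$-smallness into $\varphi_{\bm{\mu}}$-smallness, and conversely, on elements of $\mathcal{I}$. Applied to each finite initial union $\bigcup_{n\le N}F_{k_n}\in\mathcal{I}$ and then combined with lower semi-continuity of $\varphi$ to pass to $N\to\infty$, this yields $\Dstrong$ for $\varphi$ with the same $s$.

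The main work lies in \ref{item:abc5} $\Rightarrow$ \ref{item:abc1}. Let $\varphi$ satisfy $\Dweak$, and for each $k\in\omega$ fix witnesses $\delta_k>0$ and $s^{(k)}\in\mathscr{H}$ for $\varepsilon_k:=2^{-k}$. Since $K_{s',F}\subseteq K_{s,F}$ whenever $s'\subseteq s$, any infinite subsequence of a witness is again a witness; I arrange $s^{(k)}_k\to\infty$ by passing to tails. Set $I^{(k)}_m:=(s^{(k)}_m,s^{(k)}_{m+1}]$ and, for $m\ge k$, define the bounded lscsm $\mu_{(k,m)}(A):=\varphi(A\cap I^{(k)}_m)$, which has finite support. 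By the choice $s^{(k)}_k\to\infty$, every integer lies in only finitely many of the supports $\mathrm{supp}(\mu_{(k,m)})$, so by Proposition \ref{prop:supportgeneralizeddensityideal}\ref{item:g4} it is enough to show $\mathcal{I}=\Exh(\nu)$ for $\nu:=\sup_{(k,m):\,m\ge k}\mu_{(k,m)}$. The inclusion $\Exh(\varphi)\subseteq\Exh(\nu)$ is automatic from $\nu\le\varphi$. Conversely, given $A\in\Exh(\nu)$ and $\varepsilon>0$, pick $k$ with $\varepsilon_k<\varepsilon$ and a finite $F$ with $\varphi((A\setminus F)\cap I^{(k)}_m)<\delta_k$ for every $m\ge k$; the nonempty terms of $((A\setminus F)\cap I^{(k)}_m)_{m\ge k}$ form a sequence in $\mathcal{F}_{\mathrm{incr}}\cap\mathcal{G}_{\varphi,\delta_k}$ whose consecutive pairs are separated by the points $s^{(k)}_{m+1}$, so the identity belongs to the corresponding $K_{s^{(k)},\cdot}$. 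Applying $\Dweak$ bounds $\varphi$ of their union by $\varepsilon_k$, and absorbing the finite set $A\cap[0,s^{(k)}_k]$ into $F$ then gives $\varphi(A\setminus F')<\varepsilon$.

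The main obstacle, I expect, is the converse inclusion in \ref{item:abc5} $\Rightarrow$ \ref{item:abc1}. The naive strategy of fixing a single partition of $\omega$ into finite intervals and setting $\mu_n(A):=\varphi(A\cap J_n)$ forces the breakpoints of that partition to be a pseudointersection of $\{s^{(k)}\}_k$, something one cannot secure in ZFC. The key move is to run the witnesses in parallel with overlapping supports and exploit the more flexible characterization \ref{item:g4}, where each integer is only required to be covered finitely often rather than exactly once. By contrast, \ref{item:abc1} $\Rightarrow$ \ref{item:abc2} is essentially a standard transfer of $\Dstrong$ from $\varphi_{\bm{\mu}}$ to any other lscsm generating $\mathcal{I}$, using uniqueness of the Polish group topology on $\mathcal{I}$.
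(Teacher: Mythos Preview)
Your argument is correct (up to routine $<$ versus $\le$ bookkeeping in the \ref{item:abc1} $\Rightarrow$ \ref{item:abc2} step, which is fixed by halving $\varepsilon$ and $\delta'$), but both nontrivial implications follow a genuinely different route from the paper.

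For \ref{item:abc1} $\Rightarrow$ \ref{item:abc2}, the paper (Lemma~\ref{lem:firstimplicationlastsection}) argues by contradiction: given the block partition $(S_n)$, it fixes $s=(\max S_n)$, assumes $\varphi$ fails $\Dstrong$, and manufactures a set that lies in $\Exh(\varphi_{\bm{\mu}})$ but not in $\Exh(\varphi)$. You instead observe directly that $\varphi_{\bm{\mu}}$ satisfies $\Dstrong$ with this same $s$, and then transfer the property to an arbitrary $\varphi$ with $\Exh(\varphi)=\mathcal{I}$ via continuity of the identity at $\emptyset$ between $(\mathcal{I},d_{\varphi_{\bm{\mu}}})$ and $(\mathcal{I},d_\varphi)$. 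This is shorter and more conceptual; the paper's argument is more self-contained in that it does not appeal to Solecki's theorem on the uniqueness of the submeasure topology.

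For \ref{item:abc5} $\Rightarrow$ \ref{item:abc1}, the paper proceeds in two steps (Lemmas~\ref{lem3.1}--\ref{lem3.2}): it first \emph{diagonalizes} the witnesses $s^{(k)}$ into a single sequence $s$ by choosing $s_{n+1}$ large enough that every $s^{(m)}$ with $m\le n+1$ has a term in $(s_n,s_{n+1})$, simultaneously modifying $\varphi$ to a lscsm $\nu$ satisfying a variant of $\Dstrong$ with this $s$; it then shows $\Exh(\nu)$ is a generalized density ideal using the partition determined by $s$. You bypass both the diagonalization and the auxiliary $\nu$ by running all $s^{(k)}$ in parallel and invoking condition \ref{item:g4} of Proposition~\ref{prop:supportgeneralizeddensityideal}. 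Your approach is more streamlined; the paper's produces an explicit interval partition witnessing \ref{item:g1} rather than just \ref{item:g4}. Incidentally, your remark that a single partition would require a pseudointersection of the $s^{(k)}$ overstates the difficulty: the paper's diagonalization \emph{does} yield a single $s$ in ZFC, just not as a pseudointersection, and the price is the ``$k_{2n}$'' in Lemma~\ref{lem3.1}. Your route via \ref{item:g4} neatly sidesteps this.
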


%\textcolor{red}{[Figure]}

The proof is divided in some intermediate steps. 
\begin{lem}\label{lem:firstimplicationlastsection}
Let $\varphi$ be a lscsm and assume that $\Exh(\varphi)$ is a generalized density ideal. Then $\varphi$ satisfies condition $\Dstrong$. %\textcolor{red}{[Togliere?]}
\end{lem}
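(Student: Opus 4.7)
The plan is to invoke Proposition \ref{prop:supportgeneralizeddensityideal} to realize $\I=\Exh(\varphi)$ as $\Exh(\varphi_{\bm{\mu}})$ for a lscsm $\varphi_{\bm{\mu}}=\sup_n\mu_n$ whose summands have interval supports, and then to transfer submeasure estimates between $\varphi$ and $\varphi_{\bm{\mu}}$ via the fact (recalled in the introduction) that both submeasures induce the same Polish topology on $\I$.

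By condition \ref{item:g1} of Proposition \ref{prop:supportgeneralizeddensityideal}, fix $\bm{\mu}=(\mu_n)$ with $(\mathrm{supp}(\mu_n))\in\mathcal{F}_{\mathrm{int}}$ and $\I=\Exh(\varphi_{\bm{\mu}})$; write $I_n:=\mathrm{supp}(\mu_n)$ and set $s_n:=\max I_n$, so that $s=(s_n)\in\mathscr{H}$. This $s$ will serve as the witness in condition $\Dstrong$. Since $d_\varphi$ and $d_{\varphi_{\bm{\mu}}}$ induce the same Polish topology on the abelian group $(\I,\triangle)$ and are both translation-invariant, the identity on $\I$ is uniformly continuous in either direction; hence for every $\eta>0$ there exists $\delta>0$ such that $\varphi(A)<\delta$ and $A\in\I$ imply $\varphi_{\bm{\mu}}(A)<\eta$, and symmetrically. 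Given $\varepsilon>0$, first pick $\eta>0$ so that $\varphi_{\bm{\mu}}(B)<\eta$ and $B\in\I$ imply $\varphi(B)<\varepsilon/2$, and then pick $\delta>0$ so that $\varphi(A)<\delta$ and $A\in\I$ imply $\varphi_{\bm{\mu}}(A)<\eta/2$.

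Now fix $F=(F_n)\in\mathcal{F}_{\mathrm{incr}}\cap\mathcal{G}_{\varphi,\delta}$ and $k\in K_{s,F}$. By the defining property of $K_{s,F}$, for each $n$ there is $m_n$ with $\max F_{k_n}\le s_{m_n}<\min F_{k_{n+1}}$; since $s_{m_n}=\max I_{m_n}$ and each $I_j$ is an interval, this forces every $I_j$ to meet at most one of the $F_{k_n}$'s---the key combinatorial observation. Setting $B_N:=\bigcup_{n\le N}F_{k_n}\in\Fin\subseteq\I$, the observation yields
$$
\varphi_{\bm{\mu}}(B_N)=\sup_j \mu_j(B_N\cap I_j)\le\sup_n\varphi_{\bm{\mu}}(F_{k_n})\le\eta/2,
$$
so the choice of $\eta$ gives $\varphi(B_N)<\varepsilon/2$ for every $N$. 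Because $\varphi$ is lower semicontinuous and, for each $M\in\omega$, $\bigcup_n F_{k_n}\cap M\subseteq B_N$ for $N$ large enough that $\min F_{k_{N+1}}\ge M$, one concludes
$$
\varphi\Bigl(\bigcup_n F_{k_n}\Bigr)=\lim_M\varphi\Bigl(\bigcup_n F_{k_n}\cap M\Bigr)\le\varepsilon/2<\varepsilon,
$$
which is exactly condition $\Dstrong$ for $\varphi$ with witness $s$.

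The subtle point is that $\bigcup_n F_{k_n}$ need not itself lie in $\I$, so the topological equivalence between $d_\varphi$ and $d_{\varphi_{\bm{\mu}}}$ cannot be applied to the full union directly; this is circumvented by bounding only the finite truncations $B_N$ (trivially in $\I$) uniformly in $N$ and then recovering the bound for the entire union by lower semicontinuity of $\varphi$.
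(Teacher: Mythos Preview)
Your proof is correct and takes a genuinely different route from the paper's. The paper argues by contradiction: assuming $\varphi$ fails $\Dstrong$ with the witness $s_n=\max S_n$ (where $S_n$ are consecutive interval supports of $\mu_n$), it builds for each $m$ a finite union $G_m$ with $\varphi(G_m)\ge\varepsilon/2$ made of pieces of $\varphi$-value below $\varepsilon/2^m$, shows $G=\bigcup_m G_m\notin\Exh(\varphi)$, and then carves out a subset $X\subseteq G$ supported on selected blocks $S_{j_i}$ which must simultaneously satisfy $\|X\|_\varphi>0$ and $X\in\Exh(\varphi_{\bm\mu})$, yielding the contradiction. This is a purely combinatorial, self-contained construction.

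You instead exploit the fact (cited in the introduction, from \cite{MR1708146}) that the Polish submeasure topology on $\mathcal{I}$ is independent of the defining lscsm. Because both $d_\varphi$ and $d_{\varphi_{\bm\mu}}$ are translation-invariant and generate the same topology, continuity at $\emptyset$ upgrades to uniform continuity, which lets you convert the $\varphi$-smallness of each $F_{k_n}$ into $\varphi_{\bm\mu}$-smallness, use the combinatorial fact that each $I_j$ meets at most one $F_{k_n}$ to bound $\varphi_{\bm\mu}(B_N)$ uniformly, and convert back to a uniform $\varphi$-bound on the finite truncations $B_N$; lower semicontinuity then handles the full union. Your argument is shorter and conceptually cleaner, at the price of invoking Solecki's topological result; the paper's proof is longer but elementary in the sense that it stays entirely within submeasure calculus. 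Your closing remark about why the full union cannot be treated directly is well placed and identifies precisely where the finite truncation trick is needed.
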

\begin{proof}
%Let $\mathcal{I}$ be a generalized density ideal and let $\varphi$ be a lscsm such that $\mathcal{I}=\mathrm{Exh}(\varphi)$. 
Let us suppose for the sake of contradiction that $\varphi$ does not satisfy condition $\Dstrong$. Thanks to Proposition \ref{prop:supportgeneralizeddensityideal}, we can suppose without loss of generality that there exist a sequence $(\mu_n)$ of submeasures and a sequence $(S_n) \in \mathcal{F}_{\mathrm{int}}$ of consecutive intervals of $\omega$ such that $\varphi=\sup_n\mu_n$ and $S_n=\mathrm{supp}(\mu_n)$ for all $n \in \omega$. 
In particular, $\mathrm{Exh}(\varphi)=\{A\subseteq \omega: \lim_n \mu_n(A)=0\}$ and $\mu_n(\omega)\not\to 0$. 

Define $s \in \mathscr{H}$ by $s_n:=\max S_n$ for all $n \in \omega$. Since $\varphi$ does not satisfy condition $\Dstrong$, there exists $\varepsilon>0$ such that for all nonzero  $m\in\omega$ there are $F^m=(F^m_n)\in\mathcal{F}_{\mathrm{incr}} \cap \mathcal{G}_{\varphi,\frac{\varepsilon}{2^m}}$ and $k^m=(k^m_n)\in K_{s,F^m}$ for which $\varphi(\bigcup_{n} F^m_{k_n^m})\geq\varepsilon$. Since $\varphi$ is a lscsm, for each $m$ there exists $\ell_m \in \omega$ such that $\varphi(\bigcup_{n\leq \ell_m} F^m_{k^m_n})\geq\nicefrac{\varepsilon}{2}$. 

Set $G_m:=\bigcup_{n\leq \ell_m} F^m_{k^m_n}$ and note that $G:=\bigcup_m G_m$ does not belong to $\mathrm{Exh}(\varphi)$. 
To this aim, fix a nonzero $j\in \omega$. 
Since $(k^j_n)\in K_{s,F^j}$, there are at most $j$ many sets $F^j_{k_n^j}$ which have nonempty intersection with the set $s_j+1$ and each of them has $\varphi$-value smaller than $\varepsilon/2^{j}$. Thus 
\begin{displaymath}
\begin{split}
\varphi(G\setminus (s_j+1)) \ge \varphi(G_j\setminus (s_j+1))\ge \varphi(G_j)-\varphi(G_j \cap (s_j+1))\ge \varepsilon-j \frac{\varepsilon}{2^j}\ge \frac{\varepsilon}{2}.
\end{split}
\end{displaymath}
Therefore $\|G\|_\varphi \ge \nicefrac{\varepsilon}{2}>0$. In particular, there exists a sequence $(j_i) \in \mathscr{H}$ such that $\mu_{j_i}(G) > \nicefrac{\varepsilon}{3}$ for all $i\in \omega$. Passing eventually to a subsequence, we can assume without loss of generality that 
\begin{equation}\label{eq:inequalitysI}
\forall i \in \omega, \quad s_{j_i}>\max G_{i+1}.
\end{equation}

At this point, define $X:=G\cap \bigcup_i S_{j_i}$. Then by construction $\|X\|_\varphi \ge \nicefrac{\varepsilon}{3}>0$, so that $X\notin \mathrm{Exh}(\varphi)$. On the other hand, we will show that $\lim_n \mu_n(X)=0$, reaching a contradiction. Taking into account \eqref{eq:inequalitysI}, note that $S_{j_i}\cap X= S_{j_i} \cap \bigcup_{m>i}G_m$ for all $i$. Moreover, recall that, for all $i,m \in \omega$, there exists at most one $n$ such that $F^m_{k^m_n}\cap S_{j_i} \neq \emptyset$. 
It follows that
$$
\forall i \in \omega,\quad \varphi(S_{j_i}\cap X)=\mu_{j_i}(\bigcup_{m>i}G_m)\le \sum_{m>i}\mu_{j_i}(G_m) \le \sum_{m>i}\frac{\varepsilon}{2^m}=\frac{\varepsilon}{2^i}.
$$
To conclude, we obtain that
$$
\forall t \in \omega, \quad \varphi(X\setminus \bigcup_{i\le t}S_{j_i})=\varphi(X\cap \bigcup_{i> t}S_{j_i})\le \sum_{i>t}\varphi(S_{j_i} \cap X)\le \sum_{i>t}\frac{\varepsilon}{2^i}= \frac{\varepsilon}{2^t},
$$
which tends to $0$ as $t\to \infty$. Hence $X \in \mathrm{Exh}(\varphi)$, which is the wanted contradiction.
\end{proof}

Now, we show that condition $\Dweak$ implies (a variant of) condition $\Dstrong$. %, with a restriction on the parity of indexes.
\begin{lem}\label{lem3.1}
Let $\varphi$ be a lscsm which satisfies condition $\Dweak$. Then there is a lscsm $\nu$ and a sequence $s \in \mathscr{H}$ such that for every $\varepsilon>0$ there is $\delta>0$ for which, if $F \in\mathcal{F}_{\mathrm{incr}} \cap \mathcal{G}_{\nu,\delta}$ and $k \in K_{s,F}$, then $\nu(\bigcup_{n} F_{k_{2n}})<\varepsilon$, and $\Exh(\varphi)=\Exh(\nu)$.
\end{lem}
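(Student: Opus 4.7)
The plan is to take $\nu:=\varphi$ itself (so the equality $\Exh(\nu)=\Exh(\varphi)$ is automatic) and to manufacture the single sequence $s\in\mathscr{H}$ by diagonalizing over the sequences furnished by condition $\Dweak$ at the dyadic scales $\varepsilon_i:=2^{-i}$. For each integer $i\ge 1$, apply $\Dweak$ to $\varepsilon_i$ to obtain $\delta_i>0$ and $s^i\in\mathscr{H}$; by replacing $\delta_i$ with $\min(\delta_i,2^{-i}/i)$ if necessary (which only strengthens the hypothesis of $\Dweak$), I may assume $i\delta_i\le 2^{-i}$.

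Next I would construct $s=(s_m)_{m\in\omega}$ recursively so that, for each $m\ge 1$, the interval $(s_{m-1},s_m)$ meets $s^i$ for every $1\le i\le m$: set $s_0:=0$ and, having chosen $s_{m-1}$, for each $1\le i\le m$ let $t_{m,i}$ be the least element of $s^i$ strictly larger than $s_{m-1}$, and set $s_m:=1+\max_{1\le i\le m}t_{m,i}$.

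The crux of the argument is the following observation. For any $F\in\mathcal{F}_{\mathrm{incr}}$ and $k\in K_{s,F}$, let $m_n$ be the index with $\max F_{k_n}\le s_{m_n}<\min F_{k_{n+1}}$. Since $s_{m_n}<s_{m_{n+1}}$, the sequence $(m_n)$ is strictly increasing, hence $m_{2n}\ge 2n$. Whenever $2n\ge i-1$, the construction of $s$ places an element of $s^i$ inside the gap $(s_{m_{2n}},s_{m_{2n}+1})$, and such an element lies in $[\max F_{k_{2n}},\min F_{k_{2n+2}})$ because $s_{m_{2n}+1}\le s_{m_{2n+1}}<\min F_{k_{2n+2}}$. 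Setting $n^*:=\lceil(i-1)/2\rceil\le i$, the tail $(k_{2n})_{n\ge n^*}$ therefore belongs to $K_{s^i,F}$.

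To conclude, given $\varepsilon>0$ I pick $i$ with $2^{1-i}<\varepsilon$ and set $\delta:=\delta_i$. For $F\in\mathcal{F}_{\mathrm{incr}}\cap\mathcal{G}_{\varphi,\delta}$ and $k\in K_{s,F}$, condition $\Dweak$ bounds the tail $\varphi(\bigcup_{n\ge n^*}F_{k_{2n}})$ by $2^{-i}$, while subadditivity bounds the initial segment $\varphi(\bigcup_{n<n^*}F_{k_{2n}})$ by $n^*\delta_i\le i\delta_i\le 2^{-i}$, giving $\varphi(\bigcup_n F_{k_{2n}})<2^{1-i}<\varepsilon$, as required. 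I expect the only real subtlety to be the bookkeeping around the \emph{padding}: the conclusion is forced to speak of $k_{2n}$ rather than $k_n$ precisely because one needs two consecutive $s$-separators $s_{m_{2n}}<s_{m_{2n+1}}$ witnessing $k\in K_{s,F}$ in order to guarantee that the intermediate gap $(s_{m_{2n}},s_{m_{2n}+1})$ of $s$ lies entirely inside $[\max F_{k_{2n}},\min F_{k_{2n+2}})$, and hence hosts the desired $s^i$-element.
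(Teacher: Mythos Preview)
Your proposal is correct and takes a genuinely different route from the paper's own proof. The paper does \emph{not} take $\nu=\varphi$; instead it defines an auxiliary lscsm $\psi$ by $\psi(\emptyset)=0$ and $\psi(A)=\delta_k$ where $k$ is least with $A\cap S_k\neq\emptyset$ (here $S_k$ are the consecutive intervals determined by the diagonal sequence $s$), and sets $\nu:=\max\{\varphi,\psi\}$. The purpose of $\psi$ is purely positional: if $\nu(F_n)<\delta_m$ then $\psi(F_n)<\delta_m$, which forces $F_n$ to lie entirely beyond $s_m$. This guarantees that for any $k\in K_{s,F}$ the separators $s_{\ell'}<s_{\ell''}$ witnessing two consecutive gaps already satisfy $\ell'>m$, and hence the constructed interval $[s_{\ell'},s_{\ell'+1})$ automatically contains an element of $s^m$; so the \emph{whole} even subsequence $(k_{2n})_n$ lies in $K_{s^m,F}$ and one applies $\Dweak$ directly, with no initial segment to control.

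Your approach trades this structural device for a quantitative one: you keep $\nu=\varphi$, accept that the diagonalized $s$ only places $s^i$-elements once the index exceeds $i$, and absorb the uncontrolled initial $n^*\le i$ terms by shrinking $\delta_i$ to $\min(\delta_i,2^{-i}/i)$ and using subadditivity. This is a legitimate and slightly more economical argument: you avoid defining $\psi$, avoid checking $\Exh(\max\{\varphi,\psi\})=\Exh(\varphi)$, and do not need the $\delta_k$'s to be strictly decreasing. The paper's version, on the other hand, makes the membership $(k_{2n})\in K_{s^m,F}$ hold for the full sequence, which is conceptually cleaner and explains more transparently why the conclusion of the lemma has to speak of $k_{2n}$ rather than $k_n$.
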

\begin{proof}
%Define $\varepsilon_k:=\nicefrac{1}{2^k}$ for each $k\in\omega$. 
Let $(\varepsilon_k)$ be a strictly decreasing sequence such that $\lim_k \varepsilon_k=0$. 
Then, for each $k$, there are $\delta_k>0$ and a sequence $s^k=(s_n^k) \in \mathscr{H}$ such that, if $F \in\mathcal{F}_{\mathrm{incr}} \cap \mathcal{G}_{\varphi,\delta_k}$ and $k \in K_{s^k, F}$, then $\varphi(\bigcup_{n} F_{k_n})<\varepsilon_k$. Without loss of generality, we can assume that $\delta_{k+1}<\delta_k<\varepsilon_k$. 
Let us define $s=(s_n) \in \mathscr{H}$ as follows: $s_0:=s_0^0$ and, for each $n \in \omega$, let $s_{n+1}$ be such that for all $m\leq n+1$ there is $\ell \in \omega$ such that $s_n\leq s^m_\ell<s_{n+1}$. Then, set $S_0:=[0,s_0]$ and $S_{n+1}:=(s_{n},s_{n+1}]$ for all $n\in \omega$. 

Also, let $\psi$ be the lscsm defined by $\psi(\emptyset)=0$ and, for each nonempty $A\subseteq \omega$, $\psi(A):=\delta_k$, where $k$ is the minimal integer such that $A\cap S_k\neq\emptyset$. 
At this point, set $\nu:=\max\{\varphi,\psi\}$. Then $\nu$ is a lscsm such that $\Exh(\nu)=\Exh(\varphi)$. Indeed, on the one hand, $\nu\geq\varphi$, hence $\Exh(\nu)\subseteq\Exh(\varphi)$. On the other hand, fix $A\in\Exh(\varphi)$ and $\varepsilon>0$, hence there is $n_0\in \omega$ such that $\mu(A\setminus n)<\varepsilon$ for all $n\geq n_0$. Also, there is $n_1\in \omega$ such that $\delta_n<\varepsilon$ for all $n\geq n_1$. Thus, for each $n$ such that $n\geq n_0$ and $\bigcup_{i<n_1}S_i\subseteq n$ we have $\nu(A\setminus n)<\varepsilon$. Therefore $A\in\Exh(\nu)$, which proves the opposite inclusion $\Exh(\varphi)\subseteq\Exh(\nu)$.

Lastly, we show that $\nu$ satisfies the condition in the statement. 
Fix $\varepsilon>0$ and let $m$ be the minimal integer such that $\varepsilon_m\leq \varepsilon$. We claim that $\delta:=\delta_m$ witnesses this condition. Fix $(F_n) \in\mathcal{F}_{\mathrm{incr}} \cap \mathcal{G}_{\nu,\delta}$, $k \in K_{s,F}$, and $j\in\omega$. Then there exist $\ell^\prime,\ell^{\prime\prime} \in \omega$ such that
$$
\max F_{k_{2j}}\leq s_{\ell^\prime}<\min F_{k_{2j+1}}\leq\max F_{k_{2j+1}}\leq s_{\ell^{\prime\prime}}<\min F_{k_{2(j+1)}}.
$$
Note that $m<\ell^\prime<\ell^{\prime\prime}$, where the former inequality follows by the fact that $F_n \cap \bigcup_{i\le m}S_i =\emptyset$ for all $n$ (by the definition of $\nu$ and the hypothesis $(F_n) \in\mathcal{F}_{\mathrm{incr}} \cap \mathcal{G}_{\nu,\delta}$) and the latter since $s\in \mathscr{H}$. Thus, there exists $\ell \in \omega$ such that 
$$
s_{\ell^\prime}\leq s^m_\ell<s_{\ell^\prime+1}\leq s_{\ell^{\prime\prime}}.
$$ 
It follows that $\max F_{k_{2j}}\leq s^m_{\ell}<\min F_{k_{2(j+1)}}$, so that $(k_{2n})\in K_{s^m, F}$. 
Therefore $\varphi(\bigcup_{n} F_{k_{2n}})<\varepsilon_m\leq\varepsilon$. 
It is also easy to see that $\psi(\bigcup_{n} F_{k_{2n}})\leq\delta=\delta_m<\varepsilon_m\leq\varepsilon$. 
Putting all together, we conclude that $\nu(\bigcup_{n} F_{k_{2n}})<\varepsilon$.
\end{proof}

\begin{lem}\label{lem3.2}
Let $\nu$ be a lscsm as in Lemma 	\ref{lem3.1}. 
Then $\Exh(\nu)$ is a generalized density ideal.
\end{lem}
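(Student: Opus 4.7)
The plan is to use the strictly increasing sequence $s=(s_n)\in\mathscr{H}$ provided by Lemma~\ref{lem3.1} to slice $\omega$ into finite intervals and then build from $\nu$ a sequence of lscsms with pairwise disjoint finite supports whose exhaustive ideal coincides with $\Exh(\nu)$. Concretely, I would set $I_0:=[0,s_0]$ and $I_n:=(s_{n-1},s_n]$ for $n\geq 1$, so that $(I_n)$ is a partition of $\omega$ into finite intervals, and define $\mu_n:\mathcal{P}(\omega)\to[0,\infty]$ by $\mu_n(A):=\nu(A\cap I_n)$. Each $\mu_n$ is a lscsm with $\mathrm{supp}(\mu_n)\subseteq I_n$. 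After discarding those indices with empty support, the remaining sequence $\bm{\mu}$ satisfies $(\mathrm{supp}(\mu_n))\in\mathcal{F}_{\mathrm{int}}$, so by Proposition~\ref{prop:supportgeneralizeddensityideal}\,\ref{item:g1} it will suffice to prove that $\Exh(\nu)=\Exh(\varphi_{\bm{\mu}})$.

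The inclusion $\Exh(\nu)\subseteq\Exh(\varphi_{\bm{\mu}})$ is immediate: if $\|A\|_\nu=0$ and $\varepsilon>0$, then for any finite $F$ with $\nu(A\setminus F)<\varepsilon$ and for all $n$ large enough that $F\cap I_n=\emptyset$, one has $\mu_n(A)\leq\nu(A\setminus F)<\varepsilon$, so $\mu_n(A)\to 0$. For the harder inclusion, I would fix $A\in\Exh(\varphi_{\bm{\mu}})$ and $\varepsilon>0$, extract a $\delta>0$ from Lemma~\ref{lem3.1}, and pick $n_0$ with $\nu(A\cap I_n)<\delta$ for all $n\geq n_0$. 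Enumerating the nonempty sets $A\cap I_n$ with $n\geq n_0$ in increasing order produces $(F_j)\in\mathcal{F}_{\mathrm{incr}}\cap\mathcal{G}_{\nu,\delta}$, and by construction consecutive $F_j,F_{j+1}$ are separated by at least one $s_m$ (namely $m$ equal to the index of the interval hosting $F_j$). Hence both the identity sequence $(0,1,2,\ldots)$ and its unit shift $(1,2,3,\ldots)$ lie in $K_{s,F}$.

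Applying Lemma~\ref{lem3.1} to each of these two choices of $k$ yields $\nu(\bigcup_j F_{2j})<\varepsilon$ and $\nu(\bigcup_j F_{2j+1})<\varepsilon$; by subadditivity, $\nu(\bigcup_j F_j)<2\varepsilon$. Since $\bigcup_j F_j=A\setminus[0,s_{n_0-1}]$ and $[0,s_{n_0-1}]$ is finite, this gives $\|A\|_\nu\leq 2\varepsilon$. Letting $\varepsilon\to 0$ forces $\|A\|_\nu=0$, hence $A\in\Exh(\nu)$, as required.

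The only real subtlety is that Lemma~\ref{lem3.1} controls only the $\nu$-measure of the even-indexed subcollection $\{F_{k_{2n}}\}$ rather than the full union $\bigcup_n F_n$; the decisive trick is therefore to invoke the lemma twice with a unit shift in $k$, thereby covering the even- and odd-indexed $F_j$ separately before combining via subadditivity. A minor technical point is to skip those indices $n\geq n_0$ with $A\cap I_n=\emptyset$ when constructing $(F_j)$, which preserves both membership in $\mathcal{F}_{\mathrm{incr}}$ and the separation of consecutive $F_j$ by some $s_m$.
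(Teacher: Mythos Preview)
Your proof is correct and follows essentially the same route as the paper: define $\mu_n(A)=\nu(A\cap I_n)$ on the intervals cut out by $s$, then for the nontrivial inclusion apply the conclusion of Lemma~\ref{lem3.1} twice---once to the identity sequence and once to its unit shift---and combine via subadditivity. The only cosmetic differences are that the paper feeds $\varepsilon/2$ into the lemma (getting $\varepsilon$ directly rather than $2\varepsilon$) and does not explicitly discard the empty slices $A\cap I_n$; your handling of the latter is in fact more careful, since $\mathcal{F}_{\mathrm{incr}}$ formally requires nonempty terms.
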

\begin{proof}
Define $S_0:=[0,s_0]$ and $S_{n+1}:=(s_n,s_{n+1}]$ for all $n\in\omega$. 
Let $\bm{\mu}=(\mu_n)$ be the sequence of lscsms defined by
$$
\forall n \in \omega, \forall A\subseteq \omega,\quad \mu_n(A):=\nu(A\cap S_n).
$$
We claim that $\Exh(\varphi_{\bm{\mu}})=\Exh(\nu)$, where $\varphi_{\bm{\mu}}:=\sup_n \mu_n$. 

It is clear that $\varphi_{\bm{\mu}} \le \nu$, hence $\Exh(\nu)\subseteq \Exh(\varphi_{\bm{\mu}})$. 
Conversely, fix $A\in\Exh(\varphi_{\bm{\mu}})$ and $\varepsilon>0$, hence there is $\delta>0$ such that, if $F=(F_n)\in\mathcal{F}_{\mathrm{incr}} \cap \mathcal{G}_{\nu,\delta}$ and $k\in K_{s,F}$, then $\nu(\bigcup_{n} F_{k_{2n}})<\nicefrac{\varepsilon}{2}$. 
There exists $n_0\in \omega$ such that $\mu_n(A)<\delta$ for all $n\geq n_0$. 
Define $F_n:=A\cap S_{n+n_0}$ for all $n\in\omega$. 
Then $\nu(F_n)=\mu_{n+n_0}(A)<\delta$ for all $n \in \omega$. 
Thus $(F_n)\in\mathcal{F}_{\mathrm{incr}} \cap \mathcal{G}_{\nu,\delta}$ and for each $k \in K_{s,F}$ we have $\nu(\bigcup_{n} F_{k_{2n}})<\nicefrac{\varepsilon}{2}$. 
Note the sequences $(n)$ and $(n+1)$ belong to $K_{s,F}$, so that 
$\nu(\bigcup_{n} F_{2n})<\nicefrac{\varepsilon}{2}$ and $\nu(\bigcup_{n} F_{2n+1})<\nicefrac{\varepsilon}{2}$. 
Define $m:=\min S_{n_0}$. Then for each $m\geq m_0$ we have
$$
\nu(A\setminus m) \le \nu(A\setminus m_0)=\nu(\bigcup_{n \in \omega} F_n)
\le \nu(\bigcup_{n\in \omega} F_{2n})+\nu(\bigcup_{n\in \omega} F_{2n+1})
<\varepsilon.
$$
We conclude that $A\in\Exh(\nu)$, therefore $\Exh(\varphi_{\bm{\mu}})\subseteq \Exh(\nu)$.
\end{proof}

We are finally ready to prove Theorem \ref{char}, cf. Figure \ref{fig:prooflastsection} below.

\begin{proof}[Proof of Theorem \ref{char}]
\ref{item:abc1} $\implies$ \ref{item:abc2} follows by Lemma \ref{lem:firstimplicationlastsection}. 
The implications \ref{item:abc2} $\implies$ \ref{item:abc3} $\implies$ \ref{item:abc5} and \ref{item:abc2} $\implies$ \ref{item:abc4} $\implies$ \ref{item:abc5} are obvious. 
Lastly, \ref{item:abc5} $\implies$ \ref{item:abc1} follows by Lemma \ref{lem3.1} and Lemma \ref{lem3.2}. 
\end{proof}

\bigskip

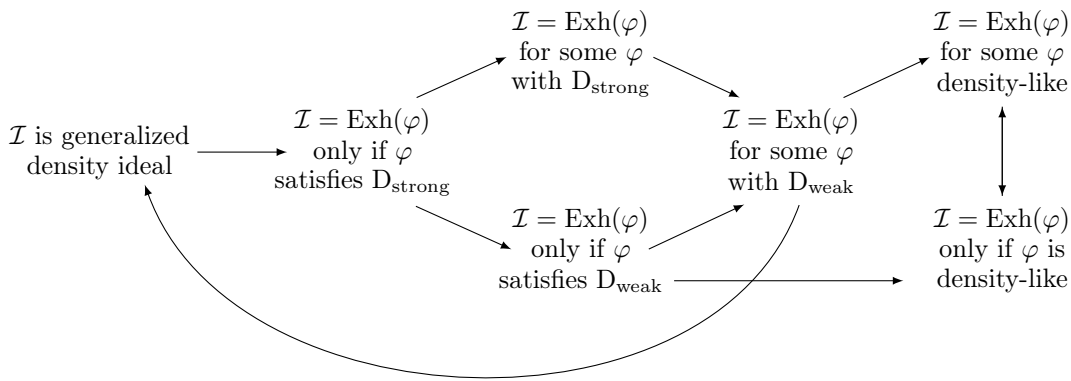
\begin{figure}[!htb]
\centering
\begin{tikzpicture}
[scale=1]%,description/.style={fill=white, inner sep=2pt}]
\node (a1) at (0,0){{\footnotesize $\mathcal{I}$ is generalized}};
\node (a1b) at (0,-.4){{\footnotesize density ideal}};
%\node (a1c) at (0,-.8){{\small generalized density ideal}};

\node (a2) at (3.5,0.2){{\footnotesize $\mathcal{I}=\mathrm{Exh}(\varphi)$}};
\node (a2b) at (3.5,-.2){{\footnotesize only if $\varphi$}};
\node (a2c) at (3.5,-.6){{\footnotesize satisfies $\Dstrong$}};

\node (a3) at (6.4,0.2+1.3){{\footnotesize $\mathcal{I}=\mathrm{Exh}(\varphi)$}};
\node (a3b) at (6.4,-.2+1.3){{\footnotesize for some $\varphi$}};
\node (a3c) at (6.4,-.6+1.3){{\footnotesize with $\Dstrong$}};

\node (a4) at (6.4,0.2-1.3){{\footnotesize $\mathcal{I}=\mathrm{Exh}(\varphi)$}};
\node (a4b) at (6.4,-.2-1.3){{\footnotesize only if $\varphi$}};
\node (a4c) at (6.4,-.6-1.3){{\footnotesize satisfies $\Dweak$}};

\node (a5) at (9.2,0.2){{\footnotesize $\mathcal{I}=\mathrm{Exh}(\varphi)$}};
\node (a5b) at (9.2,-.2){{\footnotesize for some $\varphi$}};
\node (a5c) at (9.2,-.6){{\footnotesize with $\Dweak$}};

\node (a6) at (12,0.2+1.3){{\footnotesize $\mathcal{I}=\mathrm{Exh}(\varphi)$}};
\node (a6b) at (12,-.2+1.3){{\footnotesize for some $\varphi$}};
\node (a6c) at (12,-.6+1.3){{\footnotesize density-like}};

\node (a7) at (12,0.2-1.3){{\footnotesize $\mathcal{I}=\mathrm{Exh}(\varphi)$}};
\node (a7b) at (12,-.2-1.3){{\footnotesize only if $\varphi$ is}};
\node (a7c) at (12,-.6-1.3){{\footnotesize density-like}};

\draw [-latex, thin, shorten >= .2cm, shorten <= 1.3cm] (0,-.2)--(a2b);

\draw [-latex, thin,  shorten >= .3cm] (a2)--(a3);
\draw [-latex, thin,  shorten <= .3cm] (a3)--(a5);
\draw [-latex, thin, shorten >= .3cm] (a5)--(a6);

\draw [-latex, thin,  shorten >= .3cm] (a2c)--(a4c);
\draw [-latex, thin,  shorten <= .3cm] (a4c)--(a5c);
\draw [-latex, thin, shorten >= .3cm] (a4c)--(a7c);
%\draw [-latex, thin,  shorten >= .3cm] (a5)--(a6);

\draw [-latex, thin] (a6c)--(a7);
\draw [-latex, thin] (a7)--(a6c);

%\draw[-latex] (9.2,-.85) to[out=270, in=0] (6.4, -2.5) to[out=180, in=270](.8,-.63);
\draw[-latex] (9.3,-.9) to[out=250, in=290](.6,-.63);
\end{tikzpicture}
%\par
%\vspace{2mm}
\caption{Relationship between generalized density ideals and density-like ideals, assuming $\mathcal{I}$ is an analytic P-ideal.}
\label{fig:prooflastsection}
\end{figure}

%\textcolor{red}{
%Let $\I$ be any analytic P-ideal. Recall that '$\I=\Exh(\mu)$ for some density-like $\mu$' is equivalent to 'any $\mu$ such that $\I=\Exh(\mu)$ is density-like'. In the following diagram $(\exists P)$ means 'there is $\mu$ satisfying property $P$ such that $\I=\Exh(\mu)$' and $(\forall P)$ means 'any $\mu$ such that $\I=\Exh(\mu)$ satisfies property $P$'.}
%
%\begin{figure}[H]
%\centering
%\begin{tikzcd}
%(\exists C1) \arrow[r]
%& 
%(\exists C2) \arrow[r]
%& 
%(\exists density-like) \arrow[d]
%\\ 
%(\forall C1) \arrow[r]\arrow[u]
%& 
%(\forall C2) \arrow[r]\arrow[u]
%& 
%(\forall density-like) \arrow[u]
%\end{tikzcd}
%\end{figure}

%%%%%%%%%%%%%%%%%%%%%%%%%%%%%%

\section{Concluding Remarks}\label{sec:concludingrmk}

Differently from the case of density-like lscsms, if $\varphi$ and $\psi$ are two lscsms such that $\mathrm{Exh}(\varphi)=\mathrm{Exh}(\psi)$ and $\varphi$ is strongly-density-like, then $\psi$ is \emph{not} necessarily strongly-density-like. 

Indeed, let $\varphi$ be the strongly-density-like lscsm defined in Example \ref{ex-strongly-density-like}. Then, with the same notations, it is easily seen that $A\in \mathrm{Exh}(\varphi)$ if and only if $A\cap X_n\in\Fin$ for all $n \in \omega$, hence $\Exh(\varphi)$ is isomorphic to $\emptyset\times\Fin$. However, $\emptyset\times\Fin$ is a generalized density ideal and, thanks to Remark \ref{rem-strongly-density-like}, there exists a strongly-density-like lscsm $\psi$ such that $\Exh(\varphi)=\Exh(\psi)$. We conclude with an open question.

\begin{question}
%\textcolor{red}{[?]} 
Does there exist a density-like ideal $\I$ such that $\I\neq\Exh(\varphi)$ for each strongly-density-like lscsm $\varphi$?
\end{question}

%%%%%%%%%%%%%%%%%%%%%%%%%%%%%%%%%%%%

\subsection*{Acknowledgments} 
The authors are grateful to an anonymous reviewer for his careful reading of the manuscript and constructive suggestions, and to Jacek Tryba (University of Gda{\'n}sk, PL) for several useful comments and for pointing out reference \cite{MR2078923}. 
P.L. thanks PRIN 2017 (grant 2017CY2NCA)
for financial support.

%%%%%%%%%%%%%%%%%%%%%%%%%%%%%%%%%%%%%%%%

%\nocite{*}
\bibliographystyle{amsplain}
%\bibliography{ideale}

\begin{thebibliography}{99}

\bibitem{MR3526021}
F.~Albiac and N.~J. Kalton, \emph{Topics in {B}anach space theory}, second ed.,
  Graduate Texts in Mathematics, vol. 233, Springer, [Cham], 2016, With a
  foreword by Gilles Godefroy.

\bibitem{MR3391516}
M.~Balcerzak, P.~Das, M.~Filipczak, and J.~Swaczyna, \emph{{Generalized kinds
  of density and the associated ideals}}, Acta Math. Hungar. \textbf{147}
  (2015), no.~1, 97--115. 

\bibitem{MR3883171}
M.~Balcerzak and P.~Leonetti, \emph{On the relationship between ideal cluster
  points and ideal limit points}, Topology Appl. \textbf{252} (2019), 178--190.

\bibitem{Wroclaw2}
P.~Borodulin-Nadzieja and B.~Farkas, \emph{Analytic {P}-ideals and {B}anach
  spaces}, J. Funct. Anal. \textbf{279} (2020), no.~8, 108702, 31. 

\bibitem{MR3436368}
P.~Borodulin-Nadzieja, B.~Farkas, and G.~Plebanek, \emph{Representations of
  ideals in {P}olish groups and in {B}anach spaces}, J. Symb. Log. \textbf{80}
  (2015), no.~4, 1268--1289. 

\bibitem{MR1711328}
I.~Farah, \emph{Analytic quotients: theory of liftings for quotients over
  analytic ideals on the integers}, Mem. Amer. Math. Soc. \textbf{148} (2000),
  no.~702, xvi+177. 

\bibitem{MR1988247}
I.~Farah, \emph{{How many {B}oolean algebras
  {${\mathscr{P}}({\mathbb{N}})/\mathscr{I}$} are there?}}, Illinois J. Math.
  \textbf{46} (2002), no.~4, 999--1033. 

\bibitem{MR2099603}
I.~Farah, \emph{Examples of {$\epsilon$}-exhaustive pathological submeasures},
  Fund. Math. \textbf{181} (2004), no.~3, 257--272. 

\bibitem{MR2254542}
I.~Farah, \emph{{Analytic {H}ausdorff gaps. {II}. {T}he density zero ideal}},
  Israel J. Math. \textbf{154} (2006), 235--246. 

\bibitem{MR2320288}
R.~Filip{\'o}w, N.~Mro\.{z}ek, I.~Rec\l{a}w, and P.~Szuca, \emph{{Ideal
  convergence of bounded sequences}}, J. Symbolic Logic \textbf{72} (2007),
  no.~2, 501--512.

\bibitem{MR2777744}
M.~Hru\v{s}\'{a}k, \emph{Combinatorics of filters and ideals}, Set theory and
  its applications, Contemp. Math., vol. 533, Amer. Math. Soc., Providence, RI,
  2011, pp.~29--69. 

\bibitem{MR748847}
W.~Just and A.~Krawczyk, \emph{{On certain {B}oolean algebras {${\mathscr
  P}(\omega )/I$}}}, Trans. Amer. Math. Soc. \textbf{285} (1984), no.~1,
  411--429. 

\bibitem{MR701524}
N.~J. Kalton and J.~W. Roberts, \emph{Uniformly exhaustive submeasures and
  nearly additive set functions}, Trans. Amer. Math. Soc. \textbf{278} (1983),
  no.~2, 803--816. 

\bibitem{MR3771234}
A.~Kwela, \emph{Erd{\H o}s-{U}lam ideals vs. simple density ideals}, J. Math.
  Anal. Appl. \textbf{462} (2018), no.~1, 114--130.

\bibitem{MR3950052}
A.~Kwela, M.~Pop{\l}awski, J.~Swaczyna, and J.~Tryba, \emph{Properties of
  simple density ideals}, J. Math. Anal. Appl. \textbf{477} (2019), no.~1,
  551--575. 

\bibitem{MR3968131}
P.~Leonetti, \emph{Limit points of subsequences}, Topology Appl. \textbf{263}
  (2019), 221--229. 

\bibitem{MR1708151}
A.~Louveau and B.~Velickovic, \emph{Analytic ideals and cofinal types}, Ann.
  Pure Appl. Logic \textbf{99} (1999), no.~1-3, 171--195. 

\bibitem{MR1169042}
A.~Louveau and B.~Veli\v{c}kovi{\'c}, \emph{{A note on {B}orel equivalence
  relations}}, Proc. Amer. Math. Soc. \textbf{120} (1994), no.~1, 255--259.

\bibitem{Matrai}
T.~M\'{a}trai, \emph{More cofinal types of definable directed orders}, Trans.
  Amer. Math. Soc., to appear.

\bibitem{MR2078923}
M.~R. Oliver, \emph{Continuum-many {B}oolean algebras of the form
  {$\mathscr{P}(\omega)/\mathscr{I}$}, {$\mathscr{I}$} {B}orel}, J. Symbolic
  Logic \textbf{69} (2004), no.~3, 799--816. 

\bibitem{MR381720}
D.~Schmeidler, \emph{Cores of exact games. {I}}, J. Math. Anal. Appl.
  \textbf{40} (1972), 214--225.

\bibitem{MR1708146}
S.~Solecki, \emph{{Analytic ideals and their applications}}, Ann. Pure Appl.
  Logic \textbf{99} (1999), no.~1-3, 51--72. 

\bibitem{MR3362232}
S.~Solecki, \emph{Tukey reduction among analytic directed orders}, Zb. Rad.
  (Beogr.) \textbf{17(25)} (2015), no.~Selected topics in combinatorial
  analysis, 209--220. 

\bibitem{MR2134228}
S.~Solecki and S.~Todorcevic, \emph{Cofinal types of topological directed
  orders}, Ann. Inst. Fourier (Grenoble) \textbf{54} (2004), no.~6, 1877--1911
  (2005). 

\bibitem{MR2787694}
S.~Solecki and S.~Todorcevic, \emph{Avoiding families and {T}ukey functions on the nowhere-dense
  ideal}, J. Inst. Math. Jussieu \textbf{10} (2011), no.~2, 405--435.
  
 \bibitem{MR2456888}
M.~Talagrand, \emph{Maharam's problem}, Ann. of Math. (2) \textbf{168} (2008),
  no.~3, 981--1009. 

\bibitem{MR3920747}
C.~Uzc\'{a}tegui~Aylwin, \emph{Ideals on countable sets: a survey with
  questions}, Rev. Integr. Temas Mat. \textbf{37} (2019), no.~1, 167--198.

\end{thebibliography}

\end{document}